\title{A distributive lattice of model structures\\ relating to spectral sequences}
\author{James A. Brotherston}
\address{School of Mathematics and Statistics, University of Sheffield, S3 7RH, UK}
\email{brotherston.maths@gmail.com}
\date{}
\keywords{filtered chain complex, bicomplex, spectral sequence, model category, cellularization principle, distributive lattice}
\subjclass{
  18N40, 
  18G40, 
  18N55, 
  06D99
}
\thanks{This work was supported by the Engineering and Physical Sciences Research Council.}
\theoremstyle{plain}
\newtheorem{theo}{Theorem}[subsection]
\Crefname{theo}{Theorem}{Theorems}
\newtheorem{lemm}[theo]{Lemma}
\Crefname{lemm}{Lemma}{Lemmas}
\newtheorem{prop}[theo]{Proposition}
\Crefname{prop}{Proposition}{Propositions}
\newtheorem{coro}[theo]{Corollary}
\Crefname{coro}{Corollary}{Corollaries}
\theoremstyle{definition}
\newtheorem{defi}[theo]{Definition}
\Crefname{defi}{Definition}{Definitions}
\Crefname{exam}{Example}{Examples}
\theoremstyle{remark}
\Crefname{rema}{Remark}{Remarks}
\Crefname{nota}{Notation}{Notations}
\theoremstyle{plain}
\newtheorem{theoL}{Theorem}
\Crefname{theoL}{Theorem}{Theorems}
\Crefname{lemmL}{Lemma}{Lemmas}
\newtheorem{propL}[theoL]{Proposition}
\Crefname{propL}{Proposition}{Propositions}
\Crefname{coroL}{Corollary}{Corollaries}
\newcommand{\mc}{\mathcal}
\newcommand{\Zbb}{\mathbb{Z}}
\let\amsamp=&
\renewcommand{\phi}{\varphi}
\newcommand{\new}{\mathrm{new}}
\DeclareMathOperator*{\im}{im}
\newcommand{\id}{\mathrm{id}}
\newcommand{\Hom}{{\mathrm{Hom}}}
\newcommand{\mathdash}{\text{-}}
\newcommand{\Fib}{\mathrm{Fib}}
\newcommand{\Inj}{\mathrm{Inj}}
\newcommand{\Proj}{\mathrm{Proj}}
\newcommand{\Cell}{\mathrm{Cell}}
\newcommand{\Cof}{\mathrm{Cof}}
\newcommand{\Quill}{\simeq_Q}
\newcommand{\cellularization}{\mathrm\mathrm{cell}\mathrm}
\newcommand{\fCh}{f\mathcal{C}}
\newcommand{\fChS}{\left(\fCh\right)_S}
\newcommand{\fChT}{\left(\fCh\right)_T}
\newcommand{\Er}{\mathcal{E}_r}
\newcommand{\Shift}{S}
\newcommand{\Dec}{\mathrm{Dec}}
\newcommand{\bCh}{b\mathcal{C}}
\newcommand{\bChS}{\left(\bCh\right)_S}
\newcommand{\Z}{\mathcal{Z}}
\newcommand{\B}{\mathcal{B}}
\newcommand{\ZW}{\mathcal{ZW}}
\newcommand{\BW}{\mathcal{BW}}
\newcommand{\NW}{\mathcal{NW}}
\newcommand{\Totp}{\mathrm{Tot}^{\Pi}}
\newcommand{\Totc}{\mathrm{Tot}^{\oplus}}
\newcommand{\rSusp}{\Sigma^r}
\newcommand{\rLoops}{\Omega^r}
\DeclareMathOperator{\map}{\mathrm{map}}
\newcommand{\noloc}{\nobreak\mskip6muplus1mu{:}\nonscript
  \mkern-\thinmuskip\mathpunct{}\mskip2mu\relax}
\newcommand{\smallbot}{%
  \begingroup\setlength\unitlength{.15em}%
  \begin{picture}(1,1)
    \roundcap
    \polyline(0,0)(1,0)
    \polyline(0.5,0)(0.5,1)
  \end{picture}%
  \endgroup
}
\newcommand{\inadj}[4]{
  #1\colon #2%
  \mathrel{\vcenter{%
      \offinterlineskip\m@th
      \ialign{%
        \hfil$##$\hfil\cr
        \longrightarrow\cr
        \noalign{\kern-.3ex}
        \smallbot\cr
        \longleftarrow\cr
      }%
    }}%
  #3 \noloc #4%
}
\newcommand{\inadjarrows}[2]{
  #1%
  \mathrel{\vcenter{%
      \offinterlineskip\m@th
      \ialign{%
        \hfil$##$\hfil\cr
        \longrightarrow\cr
        \noalign{\kern-.3ex}
        \smallbot\cr
        \longleftarrow\cr
      }%
    }}%
  #2%
}
\newcommand{\adj}[4]{
  \begin{tikzcd}[ampersand replacement=\&]
    {#1} \colon {#2} \arrow[r, shift left=.5ex]
    \arrow[r, phantom, "\smallbot"] \& {#3} \noloc {#4}
    \arrow[l, shift left=.5ex]
  \end{tikzcd}%
}
\newlength\stextwidth
\begin{document}
\maketitle
\begin{abstract}
  The $S$-model category structures on filtered chain complexes and bicomplexes were introduced by Cirici, Egas Santander, Livernet and Whitehouse and later generalised by this author.
In this paper we show they are left proper, cellular and stable model categories.
We use these properties and the Cellularization Principle of Greenlees and Shipley to show that an adjunction with right adjoint the product totalisation functor from bicomplexes to filtered chains is a Quillen equivalence.
Combined with other known Quillen equivalences between filtered chains this shows these model categories all present the same homotopy category.
We also construct a distributive lattice whose elements are the $S$-model categories of filtered chain complexes.


\end{abstract}
\section{Introduction}\label{introduction}
In \cite{CELW} the authors define, for each $r\geq 0$, model category structures on the categories of filtered chain complexes $\fCh$  and bicomplexes $\bCh$ whose weak equivalences are the quasi-isomorphisms between the $r$-pages of the associated spectral sequences.
For each $r$ these came in two flavours. For $\fCh$, one whose fibrations are bidegreewise surjective on the \textit{$r$-cycles} and the other whose fibrations are bidegreewise surjective on all $k$-cycles for $0\leq k\leq r$.
For $\bCh$ the analogous fibrations are defined with \textit{witness cycles} and additionally require $0$-witness cycle surjectivity in both cases.

The interest in such model structures is in situations where one has constructions well defined up to an isomorphism on some page of an associated spectral sequence.
Some interesting examples are listed in the introduction of \cite{CELW} including examples in \textit{mixed Hodge theory} and \textit{rational homotopy theory}.

In \cite{BMonoidal} this author generalised their model categories to include those indexed by any non-empty finite subset $S$ of $\mathbb{N}$ whose fibrations are bidegreewise surjective on $s$-(witness) cycles for all $s\in S$.
We denote these $\fChS$ and $\bChS$.
Our convention is that $0\in\mathbb{N}$ and when $S$ indexes $\bChS$ that $0$ is in $S$.
We generally let $r$ denote the maximum of $S$ and this defines the weak equivalences as above.
For $\fChS$ we also demonstrated these are monoidal model structures as in \cite{H}, 
and showed the existence of various model categories of modules and algebras as follow from Schwede and Shipley's monoid axiom.

This paper further investigates the model categories $\fChS$ and $\bChS$.
Our main result is the following which is shown in \cref{allModCatsQuillenEquiv}.
\begin{theoL}
  There is a zig-zag of Quillen equivalences between any two of the model categories $\fChS$ and $\left(\bCh\right)_T$ where $S$ and $T$ are any non-empty finite subsets of $\mathbb{N}$ with $0\in T$.
  The underlying adjunctions of the Quillen equivalences in question are either of the form identity-identity, shift-d\'ecalage or a product totalisation adjunction.

  In particular then all their homotopy categories are equivalent.
\end{theoL}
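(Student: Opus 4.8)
The plan is to use the fact that ``connected by a zig-zag of Quillen equivalences'' is an equivalence relation on model categories: zig-zags concatenate, so the relation is transitive, and it therefore suffices to connect every model category appearing in the statement to a single chosen hub. I would take the hub to be the filtered-chain model category $\fChS$ with $S=\{0\}$, and establish the theorem in three stages: (i) every $\fChS$ is connected to the hub; (ii) every $\left(\bCh\right)_T$ is connected to some filtered-chain model category; (iii) the bridge between the two sides is supplied by the product totalisation adjunction. Stages (i) and (ii) use only identity and shift--d\'ecalage adjunctions, while stage (iii) is the genuinely new input.

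For stage (i) I would separate the two pieces of data hidden in $S$: its maximum $r=\max S$, which alone determines the weak equivalences, and the remaining elements, which affect only the (co)fibrations. For a fixed maximum $r$, any two subsets $S,S'$ with $\max S=\max S'=r$ define the \emph{same} weak equivalences; whenever their cofibrations are nested the identity functor is left Quillen, and since it is the identity on weak equivalences it is automatically a Quillen equivalence. Because adding cycle-surjectivity conditions enlarges the class of cofibrations, any $S$ with $\max S = r$ is comparable in this sense to a common refinement such as $S\cup S'$, so all $\fChS$ with a common maximum are linked by identity--identity Quillen equivalences; this is exactly the connectedness expressed by the distributive lattice of model structures. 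To change the maximum I would use Deligne d\'ecalage: the shift--d\'ecalage adjunction relates the $(r+1)$-page of a filtered complex to the $r$-page of its d\'ecalage, and I expect it to be a Quillen equivalence between the structures at consecutive values of $r$. Iterating connects all maxima, completing stage (i); stage (ii) runs identically on the bicomplex side (respecting the constraint $0\in T$), reducing every $\left(\bCh\right)_T$ to a single bicomplex model category.

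Stage (iii) is the bridge and the main obstacle. Here I would take the adjunction whose right adjoint is the product totalisation $\Totp\colon\bCh\to\fCh$ and prove it is a Quillen equivalence. A naive comparison of weak equivalences will not succeed, so instead I would invoke the Cellularization Principle of Greenlees--Shipley. This requires first verifying that the model categories $\fChS$ and $\bChS$ are left proper, cellular and stable --- precisely the structural properties established earlier --- and then exhibiting a set of cells on one side whose images generate after cellularization on the other, so that cellularizing upgrades the Quillen adjunction to a Quillen equivalence. The delicate points are the stability statement (that the suspension--loops adjunction is a Quillen equivalence, where the algebra of $r$-cycles and witness cycles is genuinely used) and the precise identification of the generating cells so that the two cellularizations match; this is where essentially all of the technical work lies.

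Finally, chaining stages (i)--(iii) and appealing to transitivity, any $\fChS$ and any $\left(\bCh\right)_T$ with $0\in T$ are each connected to the common hub, hence to one another, and the underlying adjunctions used along the way are exactly the identity, shift--d\'ecalage, and product totalisation adjunctions asserted in the statement. The equivalence of all the homotopy categories then follows at once, since each Quillen equivalence induces an equivalence of homotopy categories and these compose along the zig-zag.
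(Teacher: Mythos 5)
Your overall route is the paper's own: a hub-and-spoke argument with $\left(\fCh\right)_{\{0\}}$ as hub, identity--identity Quillen equivalences (\cref{ididQEquiv}) between structures sharing a maximum, shift--d\'ecalage (\cref{SDecQEquiv}) to change the maximum, and the bridge $\inadjarrows{\mc{L}}{\Totp}$ proved via the Greenlees--Shipley Cellularization Principle after establishing left properness, cellularity and stability; this is precisely how \cref{allModCatsQuillenEquiv} concatenates the zig-zags, including your observation that any two $S,S'$ with a common maximum are linked through $S\cup S'$.

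One step as you have elaborated it would fail, though it is inessential to the architecture. In your detailed stage (ii) you assert the reduction ``runs identically on the bicomplex side,'' collapsing every $\left(\bCh\right)_T$ to a single bicomplex model category. But the identity--identity adjunctions on bicomplexes in \cref{ididQEquiv} only relate structures with $0\in T\subseteq S$ and $\max T=\max S$, and the paper constructs no shift--d\'ecalage adjunction on $\bCh$ --- Deligne's $\Shift^r\dashv\Dec^r$ of \cref{shiftDecalageAdjunction} is an endofunctor pair on $\fCh$ only --- so you have no way to change $\max T$ while remaining on the bicomplex side. The repair is the one the paper uses, and which your own overview line (ii) in fact states: \cref{LTotEquivalence} gives $\fChT\Quill\left(\bCh\right)_T$ for \emph{every} admissible $T$, so each bicomplex structure is bridged to the filtered side at its own $T$, and all changes of maximum happen among the $\fChS$. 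A smaller omission in stage (iii): besides matching the two Dugger cell sets, the hypothesis that actually upgrades the cellularized adjunction to a Quillen equivalence is that the derived unit is a weak equivalence on each cell $\Z_s(p,n)$ (the paper's \cref{unitIsSIsomorphism}), together with homotopical smallness of the cells and their images; you should name this condition explicitly rather than fold it into ``the cellularizations match.''
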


The identity-identity and shift-d\'ecalage Quillen equivalences are generalisations of those shown in \cite{CELW}.
For the product totalisation Quillen equivalences we first construct a left adjoint to the totalisation functor $\Totp$ which we denote $\mc{L}$.
For $S\subset\mathbb{N}\cup\left\{0\right\}$ finite, non-empty and including $0$, there is immediately a Quillen adjunction $\inadj{\mc{L}}{\fChS}{\bChS}{\Totp}$ by virtue of how weak equivalences and fibrations are defined in $\bChS$.
Using the Cellularization Principle of Greenlees and Shipley, \cite{GS}, we are able to show the following in \cref{LTotEquivalence}.

\begin{theoL}\label{LTotQEquivIntro}
  Let $S\subseteq\left\{0,1,2,\ldots,r\right\}$ containing both $0$ and $r$.
  There is a Quillen equivalence $\inadj{\mc{L}}{\fChS}{\bChS}{\Totp}$.
\end{theoL}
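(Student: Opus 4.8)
The plan is to apply the Cellularization Principle of Greenlees and Shipley \cite{GS} to the Quillen adjunction $\inadj{\mc{L}}{\fChS}{\bChS}{\Totp}$ recorded above. Since $\fChS$ and $\bChS$ have been shown to be left proper, cellular and stable, the principle applies: for any set $\mathcal{K}$ of cofibrant objects of $\fChS$ the adjunction induces a Quillen equivalence between the $\mathcal{K}$-cellularization of $\fChS$ and the $\mc{L}\mathcal{K}$-cellularization of $\bChS$, provided that for every $A\in\mathcal{K}$ the derived unit $A\to\mathbb{R}\Totp\,\mathbb{L}\mc{L}A$ is a $\mathcal{K}$-cellular equivalence. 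The strategy is therefore to choose $\mathcal{K}$ so that \emph{both} cellularizations are trivial --- equal to $\fChS$ and to $\bChS$ on the nose --- thereby reducing the whole statement to the derived unit condition on the set $\mathcal{K}$.

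For $\mathcal{K}$ I would take the filtered $r$-spheres, that is the (co)domains of the generating cofibrations of $\fChS$, ranged over all homological and filtration bidegrees. These are cofibrant, and by construction a map of filtered complexes is an $\Er$-quasi-isomorphism precisely when it induces an isomorphism on maps out of every object of $\mathcal{K}$ in the homotopy category. Hence $\mathcal{K}$ detects the weak equivalences of the stable model category $\fChS$, so every object is $\mathcal{K}$-cellular and the $\mathcal{K}$-cellularization of $\fChS$ is $\fChS$ itself.

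Next I would compute $\mc{L}$ on each generator. Using the explicit description of the left adjoint to $\Totp$, I expect $\mc{L}$ applied to a filtered $r$-sphere to be a bicomplex concentrated on a bounded antidiagonal strip --- a length-$r$ ``staircase'' --- which is exactly a witness $r$-sphere generating $\bChS$. The same detection argument as above then shows $\mc{L}\mathcal{K}$ generates $\bChS$, so its cellularization is again trivial. It is worth noting why the Cellularization Principle is the right tool here rather than a direct verification: on a general object the product totalisation $\Totp$ involves genuinely infinite products and the unit is hard to control, whereas on the bounded generators $\mc{L}A$ the totalisation is a finite product and hence completely explicit. The principle is precisely what propagates an equivalence checked on these finite generators out to all of $\fChS$ and $\bChS$.

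It remains to verify the derived unit on $\mathcal{K}$. As each $A\in\mathcal{K}$ is cofibrant we have $\mathbb{L}\mc{L}A=\mc{L}A$, so the task is to totalise the staircase bicomplex $\mc{L}A$, take a fibrant replacement, and check that the unit $A\to\Totp\mc{L}A$ becomes an $\Er$-quasi-isomorphism. I expect this to be the main obstacle: the product totalisation interacts delicately with the filtration and with the $r$-cycle and witness-cycle bookkeeping that defines the two model structures, so the genuine work is in computing $\Er\!\left(\Totp\mc{L}A\right)$ and identifying the unit as an isomorphism there. Once this is confirmed, both cellularizations being trivial upgrades the $\mathcal{K}$-cellular equivalence to an honest weak equivalence, and the Cellularization Principle delivers the desired Quillen equivalence $\inadj{\mc{L}}{\fChS}{\bChS}{\Totp}$.
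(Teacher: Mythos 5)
Your overall architecture is exactly the paper's (\cref{LTotEquivalence}): apply the Cellularization Principle of \cref{cellularizationPrinciple} to the Quillen adjunction $\mc{L}\dashv\Totp$, cellularize both sides at Dugger-type detection sets (\cref{duggersTheorem}) chosen so that neither cellularization changes the model structure, and reduce the whole statement to the derived unit on the generators; the paper takes $\mc{K}=\left\{\Z_s(p,n)\right\}_{s\in S,\,p,n\in\Zbb}$ and proves the unit condition in \cref{unitIsSIsomorphism}. Your slightly larger set (all domains and codomains of $I_S$) is harmless, since $\B_{r+1}(p,n)$ splits as a direct sum of two $r$-cycle objects.

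There is, however, a concrete error in your middle step. $\mc{L}$ applied to a filtered generator is \emph{not} a bounded staircase: since $\mc{L}^{i,i+n}(A)=A^n/F_{i-1}A^n\oplus A^{n-1}/F_iA^{n-1}$, these quotients are nonzero for every $i\leq p$, so $\mc{L}\Z_s(p,n)$ is unbounded to the lower left. By \cref{LOfSCycle} it is $\ZW_s(p,p+n)$ together with \emph{infinitely many} $\ZW_0$ summands. This breaks three of your assertions as stated. First, $\mc{L}\mc{K}$ is not literally the set of witness-cycle generators of $\bChS$; to conclude that the second cellularization is trivial you must argue, as the paper does, that cellularizing at these direct sums is equivalent to cellularizing at the witness $s$-cycles alone (here the hypothesis $0\in S$ is used, since the $\ZW_0$ summands are themselves among the generators). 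Second, your claim that on the generators ``the totalisation is a finite product and hence completely explicit'' fails: $\Totp\mc{L}\Z_s(p,n)$ contains the infinite product $\prod_{k\leq p}R$, and the unit sends the generator of $R_{(p)}^n$ to the infinite diagonal of $1$s --- the computation in \cref{unitIsSIsomorphism} succeeds because of the identification in \cref{LOfSCycle}, not because of any finiteness. Third, part 2 of \cref{cellularizationPrinciple} also requires $\mc{K}$ to be a stable set of homotopically small objects with each $\mc{L}QK_\alpha$ homotopically small in $\bChS$; your write-up omits this, and it is exactly the point made delicate by the infinite direct sum, rescued only because the extra $\ZW_0$ summands vanish in the homotopy category. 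None of this changes the architecture, but as written your ``trivial second cellularization'' and ``explicit finite unit check'' steps are incorrect without these repairs.
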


Our application of the Cellularization Principle to prove \cref{LTotQEquivIntro} requires a study of $\mc{L}$ applied to representing objects for the $s$-cycles, right properness of the model categories involved and the following new properties given by \cref{propertiesIntro}.

\begin{propL}\label{propertiesIntro}
  Each of the model categories $\fChS$ and $\bChS$ is left proper, cellular and stable.
\end{propL}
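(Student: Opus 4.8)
The plan is to establish the three properties in order of increasing difficulty, using throughout that the underlying categories $\fCh$ and $\bCh$ are bicomplete, locally presentable Grothendieck abelian categories, and that both families of $S$-model structures are cofibrantly generated by maps which are inclusions of subobjects. The single fact I would extract first is that \emph{every cofibration is a monomorphism}: the generating cofibrations are monomorphisms by construction, and in a Grothendieck abelian category monomorphisms are closed under pushout, transfinite composition and retracts, so the same holds for every relative cell complex and every retract thereof. This observation underlies both of the remaining arguments, and the discussion below is uniform in the two cases, the witness-cycle pages for $\bChS$ playing the role of the spectral sequence pages for $\fChS$.

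For cellularity I would check the three conditions of Hirschhorn's definition. Local presentability makes every object small, so the smallness requirement on the domains of the generating trivial cofibrations holds automatically; and since the generating cofibrations are built from the base ring placed in finitely many bidegrees and filtration levels, their domains and codomains are finitely presentable and therefore compact. For the remaining condition, that cofibrations be effective monomorphisms, I would invoke the abelian structure: a cofibration is a monomorphism, and in an abelian category every monomorphism is the kernel of its own cokernel and so is a regular, hence effective, monomorphism.

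For stability I would first observe that each category is pointed, with zero object the zero complex, so it suffices to prove that the suspension endofunctor of the homotopy category is an equivalence. The homological shift $X \mapsto X[1]$ is an automorphism of the underlying category that carries weak equivalences, fibrations and cofibrations to maps of the same type (shifting a filtered complex merely shifts its associated spectral sequence, and $r$-cycles to $r$-cycles), so it is an isomorphism of model categories and in particular descends to a self-equivalence of the homotopy category, inverse to the one induced by $X \mapsto X[-1]$. It then remains to identify this self-equivalence with the suspension: the mapping cone of $\mathrm{id}_X$ is $r$-acyclic, so the cofibre sequence $X \to \mathrm{cone}(\mathrm{id}_X) \to X[1]$ exhibits $X[1]$ as a model for $\Sigma X$. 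Hence $\Sigma$ is an equivalence and both model categories are stable.

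Left properness I expect to be the main obstacle. Consider a pushout of a weak equivalence $f \colon A \to B$ along a cofibration $i \colon A \to C$, with resulting map $g \colon C \to D$. Since $i$ is a monomorphism, pushing out produces a commuting map of short exact sequences whose rows are $0 \to A \to C \to Q \to 0$ and $0 \to B \to D \to Q \to 0$, with common cokernel $Q$ and vertical maps $f$, $g$ and $\mathrm{id}_Q$. One wants to deduce that $g$ is a weak equivalence by a five-lemma argument on the pages $E_{r+1}$ that detect the weak equivalences. The obstruction is that a short exact sequence of filtered complexes need not remain exact after applying a page functor $E_r$, which is not exact, so there is no automatic long exact sequence on $E_{r+1}$. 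The heart of the proof is therefore to show that cofibrations are not merely monomorphisms but are \emph{$E_r$-admissible}, in the sense that the short exact sequences they induce stay short exact after applying $E_r$; this I would read off from the explicit description of the generating cofibrations in terms of the representing objects for the $s$-cycles, together with the closure of $E_r$-admissibility under the cellular constructions. Granting it, $0 \to E_r A \to E_r C \to E_r Q \to 0$ and its counterpart for the bottom row are short exact sequences of $E_r$-complexes, yielding long exact sequences on $E_{r+1}$; since $f$ induces an isomorphism on $E_{r+1}$ and the cokernel comparison is the identity, the five lemma forces $g$ to induce an isomorphism as well, which is exactly left properness.
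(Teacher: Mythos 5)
Your proposal contains genuine gaps in two of the three parts, both traceable to the same false premise: $\fCh$ is \emph{not} an abelian category, only quasi-abelian. The map $R_{(0)}^n\rightarrow R_{(1)}^n$ which is the identity on underlying modules is both monic and epic but not invertible; its cokernel is $0$, so it is a monomorphism that is not the kernel of its cokernel, and one checks directly that it is not an effective monomorphism (the pushout $Y\coprod_XY$ is $R_{(1)}^n$ with both coprojections the identity, so the equaliser is all of $R_{(1)}^n$). Hence your cellularity argument for $\fChS$ --- ``cofibrations are monomorphisms, and in an abelian category every monomorphism is regular'' --- breaks down exactly at the effective-monomorphism condition. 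What saves the day, and what the paper proves, is that cofibrations of $\fChS$ are \emph{strict} monomorphisms (they reflect the filtration), so their cokernels are computed filtration- and degreewise and they are kernels of their cokernels after all. Your argument is fine for $\bChS$, since $\bCh$ genuinely is abelian.

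The stability argument is also wrong for $r\geq 1$: the suspension in $\fChS$ is not the plain homological shift $X[1]$ but the $r$-suspension, $F_p(\rSusp X)^n=F_{p-r}X^{n+1}$, a shift by the bidegree of the $r$-page differential. For instance $\rSusp R_{(0)}^0=R_{(r)}^{-1}$, which is not isomorphic to $R_{(0)}^{-1}$ in $\Ho(\fChS)$, as their $E_{r+1}$-pages are concentrated in different bidegrees and any map between them induces $0$ there. Your cofibre-sequence step fails because the inclusion $X\rightarrow C_0(\id_X)$ into the filtration-preserving cone, while indeed having $r$-acyclic target, is not a cofibration in $\fChS$, so its strict cofibre $X[1]$ does not compute the homotopy cofibre; were it a cofibration, your conclusion would contradict the paper's computation $\Sigma A\simeq\rSusp A$. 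The paper sidesteps cofibrancy entirely by computing $\Omega A$ as a strict pullback of the fibration $\rLoops C_r(A)\twoheadrightarrow A$, which is legitimate since all objects are fibrant and the model structures are right proper. Finally, for left properness your plan correctly isolates the crux (the page functors are not exact), but the key lemma --- that cofibrations induce short exact sequences on $E_r$-pages and that this ``$E_r$-admissibility'' is preserved by pushout along \emph{arbitrary} maps, transfinite composition and retracts --- is asserted rather than proved, and the pushout closure is precisely where the difficulty lives, since $E_r$ of a pushout is not determined by $E_r$ of its inputs. The paper instead applies Lack's criterion, reducing to double pushouts of generating cofibrations against acyclic fibrations, and verifies $Z_s$-surjectivity and $E_{r+1}$-injectivity directly, using that the kernel of the pushed-out map equals the kernel of the original acyclic fibration; some version of that explicit computation seems unavoidable, so your sketch, while a reasonable strategy, leaves the hardest step open.
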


Right properness follows by construction as all objects of the $\fChS$ and $\bChS$ are fibrant.
Cellularity is shown in \cref{cellularity} and stability in \cref{stability} by explicitly computing the homotopy pullback using a fibration from the \textit{$r$-cones} in $\fCh$ and $\bCh$.
The loops and suspension functors applied to $A\in\fCh$ can be modelled by a tensor product by a copy of the ground ring $R$ concentrated in some filtration and homological degree and for bicomplexes a tensor product by $R$ concentrated in some bidegree.
The effect of the loops and suspension functors is a shift by the bidegree of the $r$-page differential.

We make use of a result of Dugger, \cite{Dugger}, to identify a set of cofibrant objects detecting weak equivalences.
This requires knowing $\fChS$ and $\bChS$ are left proper which we demonstrate using a result of Lack, \cite{Lack}.

For any $r\geq0$ the $(r+1)$-quasi isomorphisms are a subclass of the $r$-quasi isomorphisms. Despite this in \cref{bousfieldLocalisations} we show there is no left Bousfield localisation from an $S$-model structure, on either $\fCh$ or $\bCh$ with $\max S=r$, to a model structure with weak equivalences being the $(r+1)$-quasi isomorphisms.

We finish by describing a distributive lattice structure on the set of finite non-empty subsets of $\mathbb{N}$, i.e.\ the set of indexing elements of the model categories $\fChS$.
We denote this distributive lattice by $\mc{N}$.
The poset structure is given by defining $T\leq S$ if there is a composite of left Quillen adjoints of the identity-identity and shift-d\'ecalage adjunctions from $\fChS$ to $\left(\fCh\right)_T$.
In terms of the elements of $\mc{N}$ the inequality is generated by the assertion that $T<S$ if either:
\begin{itemize}
\item $T\subset S$ and $\max T=\max S$, or
\item $S=T+1\coloneqq \left\{t+1\,|\, t\in T\right\}$.
\end{itemize}
Our meet and join operations on $S,T\in\mc{N}$ are informally described as ``the terminal model structure admitting left adjoints to both $\fChS$ and $\fChT$'' and ``the initial model structure admitting left adjoint from both $\fChS$ and $\fChT$'' in $\mc{N}$.
Explicit formulae for these operations on $S$ and $T$ are given in \cref{MeetJoinFormulae}.

By exhibiting an isomorphism from this poset with these meet and join operations to another distributive lattice using Birkhoff's representation theorem, \cref{Birkhoff}, we show the following theorem.

\begin{theoL}
  $\mc{N}$ with meet and join operations as above is a distributive lattice.
\end{theoL}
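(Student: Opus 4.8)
The plan is to realise $\mc{N}$ as the lattice of finite order ideals of its poset of join-irreducible elements, following the template of Birkhoff's representation theorem, \cref{Birkhoff}. A lattice of sets closed under finite $\cap$ and $\cup$ is a ring of sets and hence automatically distributive, so it will suffice to produce an order-isomorphism $\Phi$ from $\mc{N}$ to such a lattice $L$ carrying the meet and join of \cref{MeetJoinFormulae} to $\cap$ and $\cup$. I will apply \cref{Birkhoff} only to build and recognise the target $L$, not to presuppose the distributivity of $\mc{N}$ itself; and since $\mc{N}$ has no top element, $L$ will consist of the \emph{finite} ideals rather than of all down-sets.

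First I would fix convenient coordinates. For $S\in\mc{N}$ with $r=\max S$ write $D(S)=\{\,r-s : s\in S\,\}\subseteq\{0,1,\dots,r\}$ for its set of distances from the top; note $0\in D(S)$. Tracing the two generating relations of the order — enlarging a subset at fixed maximum, and the shift $T\mapsto T+1$ — I would check that each leaves $D(\cdot)$ either enlarged or unchanged while never decreasing the maximum, and conversely that any pair with $\max S\le\max T$ and $D(S)\subseteq D(T)$ is linked by first shifting up and then enlarging the distance set. This identifies the generated order with the product order
\[
  S\leq T \iff \max S\leq\max T \ \text{and}\ D(S)\subseteq D(T),
\]
which is manifestly a genuine partial order, in particular antisymmetric.

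Next I would locate the join-irreducibles. A short count of lower covers in these coordinates shows that an element has a unique lower cover exactly when it is a singleton $\{r\}$ with $r\geq 1$ or a pair $\{0,r\}$ with $r\geq 1$; writing $a_r=\{r\}$ and $b_r=\{0,r\}$, the induced order on the poset $J$ of join-irreducibles is the chain $a_1<a_2<\cdots$ together with maximal ``spikes'' $b_r$ satisfying $a_s\le b_r\iff s\le r$, the $b_r$ being pairwise incomparable. Every $S$ is recovered as the join, in the sense of \cref{MeetJoinFormulae}, of the join-irreducibles lying below it, so the Birkhoff map $\Phi\colon S\mapsto\{\,j\in J : j\leq S\,\}$ sends $\mc{N}$ bijectively onto the finite order ideals of $J$. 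Under $\Phi$ the displayed product order becomes inclusion of ideals, and I would then verify directly that the formulae of \cref{MeetJoinFormulae} go to $\cap$ and $\cup$ — concretely, that they compute $\bigl(\min(\max S,\max T),\,D(S)\cap D(T)\bigr)$ and $\bigl(\max(\max S,\max T),\,D(S)\cup D(T)\bigr)$. Since finite ideals are closed under $\cap$ and $\cup$, their lattice is a ring of sets and hence distributive, and transporting the structure along $\Phi$ yields the theorem.

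The main obstacle I anticipate is not the abstract invocation of \cref{Birkhoff} but the bookkeeping that makes it applicable: computing the reflexive–transitive closure of the two generators and proving antisymmetry, so that $\mc{N}$ really is the stated product order and not a proper quotient of it; pinning down the join-irreducibles together with the exact comparabilities $a_s\le b_r\iff s\le r$; and — most delicately — checking that the explicit formulae of \cref{MeetJoinFormulae} coincide with $\cap$ and $\cup$ under $\Phi$, rather than merely producing \emph{some} lower and upper bounds. The absence of a top element also demands care, as I must confirm that $\Phi$ surjects onto the finite ideals: every such ideal has the form $\{a_1,\dots,a_n\}\cup\{b_r : r\in B\}$ with $B\subseteq\{1,\dots,n\}$, matching the element $S$ with $\max S=n$ and $D(S)=\{0\}\cup B$, so that $\Phi$ is a genuine bijection of lattices.
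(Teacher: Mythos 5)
Your proposal is correct and takes essentially the same route as the paper: both identify the join-irreducibles as $\{n\}$ and $\{0,n\}$ for $n\geq 1$, exhibit the Birkhoff-style order-isomorphism onto lower sets of this poset under which the meet and join of \cref{MeetJoinFormulae} become $\cap$ and $\cup$ (your coordinates $(\max S, D(S))$ are exactly the paper's $s-t_i$ bookkeeping in \cref{posetIsoNrAndLowerSets}), and conclude distributivity because a lattice of sets under intersection and union is distributive. The only difference is organisational: the paper truncates to the finite sub-lattices $\mc{N}_r$, proves the isomorphism there, and extends to $\mc{N}$ by noting each instance of the distributive law lies in some $\mc{N}_r$, whereas you work globally with the \emph{finite} order ideals of the infinite join-irreducible poset --- a harmless variant of the same argument.
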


This paper is based on work from the author's Ph.D.\ thesis, \cite{BThesis}.


\section*{Acknowledgements}

I would like to express my deep gratitude to my Ph.D.\ supervisor Sarah Whitehouse whose support and guidance this research was carried out under.
I'm also very grateful to Daniel Graves, Luca Pol and Jordan Williamson for many helpful conversations.
\section{Preliminaries}\label{preliminiaries}
Throughout $R$ will denote a fixed commutative unital ring.
Chain complexes are graded cohomologically and filtrations are increasing filtrations.
Bicomplexes are graded cohomologically vertically and homologically horizontally.

We recall the necessary preliminaries on filtered chain complexes, bicomplexes, the shift-d\'ecalage adjunction, spectral sequences, model categories, Bousfield localisations, the Cellularization Principle, and construction of the $S$-model structures on filtered chain complexes and bicomplexes.

Our conventions agree with those of \cite{CELW} minus a choice of sign on certain differentials.
\subsection{Filtered chain complexes}
\begin{defi}\label{filtChainsObjsMorphs}
  A \textit{filtered chain complex} $A$ is a chain complex equipped with an \textit{increasing filtration}, i.e.\ subcomplexes $F_pA$ with $F_pA\subseteq F_{p+1}A\subseteq A$ for each $p\in\Zbb$. A \textit{morphism of filtered chain complexes} $f\colon A\rightarrow B$ is a morphism of the underlying chain complexes which preserves the filtration, i.e.\ $f(F_pA)\subseteq F_pB$.
\end{defi}
\begin{defi}\label{filtChainsCat}
  The \textit{category of filtered chain complexes} with objects and morphisms given as in \cref{filtChainsObjsMorphs} will be denoted $\fCh$.
\end{defi}

\begin{defi}
  An $A\in\fCh$ is said to be of pure filtration degree $p$ if $0=F_{p-1}A\subset F_pA=A$.
  We denote by $R_{(p)}^n$ a graded filtered $R$-module given by a copy of $R$ concentrated in pure filtration degree $p$ and concentrated in cohomological degree $n$.
\end{defi}
We use this notation to build further objects of $\fCh$ later, e.g.\ $R_{(p)}^n\rightarrow R_{(p-r)}^{n+1}$ will denote a direct sum of two such objects with an identity differential appearing in filtration degrees $p$ and above.

We will need to have an understanding of the small and finite objects in filtered chain complexes in dealing with colimits, as in the likes of the small objects argument.
We take \cite[Definitions 2.1.3 and 2.1.4]{H} as our definitions of small and finite objects respectively.
The following two lemmas are proved in a similar way to \cite[Lemma 2.3.2]{H}.
\begin{lemm}\label{smallFilteredChains}
  Every object of $\fCh$ is small relative to the whole category.\qed
\end{lemm}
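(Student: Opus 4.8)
The plan is to follow Hovey's proof of \cite[Lemma 2.3.2]{H} and reduce the statement to the standard fact that every $R$-module is small relative to the category of $R$-modules. The key structural observation is that filtered colimits in $\fCh$ are computed on underlying graded modules, and that for a $\lambda$-sequence $\{X_\beta\}$ each filtration piece of the colimit is itself the colimit of the filtration pieces, $F_p\!\left(\colim_\beta X_\beta\right)=\colim_\beta F_p X_\beta$. This holds because the inclusions $F_p X_\beta\hookrightarrow X_\beta$ are monomorphisms and filtered colimits of monomorphisms of $R$-modules are again monomorphisms; in particular, membership of an element of $\colim_\beta X_\beta$ in the submodule $F_p\!\left(\colim_\beta X_\beta\right)$, and likewise the differential relations, are each already witnessed at some finite stage of the diagram.

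Fix $A\in\fCh$ and let $\kappa$ be an infinite regular cardinal strictly larger than the cardinality of the underlying set $\bigsqcup_{n\in\Zbb}A^n$ of $A$. I claim $A$ is $\kappa$-small relative to the whole category. Let $\lambda$ be a $\kappa$-filtered ordinal and let $X_0\to X_1\to\cdots$ be a $\lambda$-sequence in $\fCh$ with colimit $X=\colim_{\beta<\lambda}X_\beta$; it suffices to show that the canonical comparison map
\[
  \colim_{\beta<\lambda}\Hom(A,X_\beta)\longrightarrow\Hom(A,X)
\]
is a bijection.

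For surjectivity, let $f\colon A\to X$ be a morphism of $\fCh$. Each of the fewer than $\kappa$ elements $f(a)$, as $a$ ranges over the underlying set of $A$, is represented by an element of some $X_{\beta(a)}$; since $\lambda$ is $\kappa$-filtered there is a single stage $\beta$ beyond all of these, giving a set-theoretic lift $\tilde f\colon A\to X_\beta$. It remains, after possibly increasing $\beta$, to check that $\tilde f$ is a morphism of $\fCh$. There are fewer than $\kappa$ relations $d\,\tilde f(a)=\tilde f(da)$ to impose and fewer than $\kappa$ membership conditions $\tilde f(F_p A)\subseteq F_p X_\beta$; by the structural observation above each holds at a sufficiently large stage, and by regularity of $\kappa$ all may be arranged simultaneously at one stage. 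Hence $f$ lifts to a morphism $A\to X_\beta$ and the comparison map is surjective. For injectivity, suppose $g,h\colon A\to X_\beta$ become equal after composing with $X_\beta\to X$; for each $a$ in the underlying set of $A$ the elements $g(a)$ and $h(a)$ are equalised at some stage $\beta(a)\geq\beta$, and as there are fewer than $\kappa$ of them, $\kappa$-filteredness yields a single stage at which $g=h$.

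The only point requiring genuine care is the structural observation in the first paragraph: that filtration membership and the differential relations are each detected at a finite stage of a filtered colimit, which rests on exactness of filtered colimits in $R\mathdash\mathrm{Mod}$. Once this is in hand, the remainder is the routine cardinality count above, and the same argument applies verbatim to the analogous smallness statement for bicomplexes.
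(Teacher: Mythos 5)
Your proof is correct and is essentially the paper's own argument: the paper simply proves this lemma ``in a similar way to \cite[Lemma 2.3.2]{H}'', which is exactly the adaptation you carry out, with the right extra observation that $F_p(\colim_\beta X_\beta)=\colim_\beta F_pX_\beta$ for filtered colimits (by exactness of filtered colimits in $R$-modules), so that filtration membership is witnessed at a stage. The only nit, inherited from Hovey's proof itself, is that to witness the $R$-linearity relations $\tilde f(ra)=r\tilde f(a)$ one should take $\kappa$ regular and larger than $\max\left(\abs{\bigsqcup_n A^n},\abs{R},\aleph_0\right)$ rather than just larger than the cardinality of $A$.
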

\begin{lemm}\label{FiniteFilteredChains}
  A filtered chain complex $A$ is a finite object of the category $\fCh$ if and only if it satisfies the following conditions:
  \begin{enumerate}
  \item $F_pA^n$ is finitely presented for all $p$ and $n$,
  \item $A^n=0$ for all $n\leq n_1$ for some $n_1$,
  \item $A^n=0$ for all $n\geq n_2$ for some $n_2$,
  \item $F_{p_1}A=0$ for some finite $p_1$, and
  \item $F_{p_2}A=A$ for some finite $p_2$.\qed
  \end{enumerate}
\end{lemm}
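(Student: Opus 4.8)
The plan is to prove both implications of the biconditional, following the template of \cite[Lemma 2.3.2]{H} but keeping track of the filtration throughout. The two facts I will lean on are that an $R$-module $M$ is finitely presented precisely when $\Hom_R(M,-)$ commutes with filtered colimits, and that a morphism of filtered chain complexes out of $A$ is exactly a family of $R$-module maps out of the pieces $F_pA^n$ that is compatible with the filtration inclusions and with the differentials. Since colimits in $\fCh$ are computed degreewise and filtration-wise, the object of study is the canonical comparison map $\colim_\beta \Hom_{\fCh}(A, X_\beta) \to \Hom_{\fCh}(A, \colim_\beta X_\beta)$ for a $\lambda$-sequence $(X_\beta)$.

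For sufficiency I would assume conditions (1)--(5). By the boundedness conditions (2)--(5) only finitely many of the modules $F_pA^n$ are nonzero, and by (1) each of these is finitely presented. A map $A \to \colim_\beta X_\beta$ amounts to finitely many module maps $F_pA^n \to (\colim_\beta X_\beta)^n$; since each $F_pA^n$ is finitely presented, each such map factors through some stage, and because there are only finitely many of them and $\lambda$ is $\aleph_0$-filtered a common stage $\beta$ can be chosen. The finitely many compatibility relations with differentials and inclusions then already hold at a late enough stage, again because maps out of finitely presented modules detect such equalities at finite stages. This gives surjectivity of the comparison map, and an entirely analogous finite-presentation argument gives injectivity, so $A$ is finite.

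For necessity I would assume $A$ finite and derive each condition by exhibiting a test colimit that would otherwise be violated. For the boundedness conditions (2)--(5), write $A$ as the colimit of the sequence of its subcomplexes obtained by truncating either the cohomological degree or the filtration; were $A$ unbounded in the relevant direction, no truncation would equal $A$, so $\id_A \in \Hom_{\fCh}(A, \colim_\beta X_\beta)$ could not lie in the image of any $\Hom_{\fCh}(A, X_\beta)$, contradicting finiteness. For condition (1), fix $p$ and $n$ and suppose $F_pA^n$ were not finitely presented; I would pick a filtered system of $R$-modules witnessing the failure of $\Hom_R(F_pA^n, -)$ to commute with its colimit, promote it to a filtered system in $\fCh$ by placing the modules in cohomological degree $n$ and filtration degree $p$ with vanishing differentials and no higher filtration, and observe that finiteness of $A$ would force $\Hom_R(F_pA^n,-)$ to commute with that colimit after all. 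Here conditions (4)--(5) guarantee that each fixed $F_pA^n$ really is one of the building blocks of $A$.

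The main obstacle is the necessity direction, and in particular condition (1). Constructing test diagrams inside $\fCh$ that isolate a single module $F_pA^n$ takes care, because a filtered chain complex bundles together all degrees and filtration levels with differentials and inclusions linking them, and the test system must be arranged so that these compatibilities do not obstruct reading off the finite presentation of the one module of interest. Verifying that colimits in $\fCh$ are computed degreewise and filtration-wise, and that this interacts correctly with $\Hom_{\fCh}(A,-)$, is the technical heart; once it is in place the boundedness conditions follow from the standard truncation argument and condition (1) reduces to the module-level statement about finitely presented modules.
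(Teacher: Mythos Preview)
Your proposal is correct and matches the paper's approach exactly: the paper gives no argument beyond the remark that the lemma is proved in the same way as \cite[Lemma 2.3.2]{H}, and your sketch is precisely an adaptation of Hovey's argument to the filtered setting, keeping track of the filtration degreewise. Your acknowledgement that isolating a single $F_pA^n$ for condition~(1) requires some care is well placed, but once boundedness (2)--(5) is established this reduces to the standard module-level characterisation of finite presentation, as you outline.
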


Recall on the category of chain complexes there are functors $\Sigma$ and $\Omega$ which are inverse to each other given by $(\Sigma X)^n\coloneqq X^{n+1}$ and $d_{\Sigma X}^n=-d_X^{n+1}$.
In \cite[Definition 3.5]{CELW} the authors refer to the following as the \textit{$r$-translation} of a filtered chain complex.
\begin{defi}\label{rSuspensionAndLoopsFilteredChains}
  For an $A\in\fCh$ the \textit{$r$-suspension} of $A$ denoted $\rSusp A$ is the filtered chain complex with underlying chain complex that of $\Sigma A$ and filtration given by $F_p(\rSusp A)\coloneqq F_{p-r}A^{n+1}$.
  The \textit{$r$-loops} of $A$ denoted $\Omega^rA$ is the inverse of $\rSusp$.
\end{defi}

\subsection{Shift-d\'ecalage adjunction}
The shift and d\'ecalage functors are endofunctors of $\fCh$ exhibiting an adjunction as shown by Deligne in \cite{Deligne}.
\begin{defi}\label{shiftDecalage}
  Let $r\geq 0$ and $A\in\fCh$. The \textit{shift} and \textit{d\'ecalage} endofunctors $\Shift^r$ and $\Dec^r$ on $\fCh$ are the identity on the underlying chain complex, modify the filtration by:
  \begin{align*}
    F_p(\Shift^rA)^n&\coloneqq F_{p+rn}A^n,\\
    F_p(\Dec^rA)^n&\coloneqq Z_r^{p-rn,p-rn+n}(A),
  \end{align*}
  and are such that $\Shift^r$ is equal to r composites of $\Shift\coloneqq\Shift^1$ and $\Dec^r$ is r composites of $\Dec\coloneqq\Dec^1$.
\end{defi}
\begin{lemm}[{\cite[\S2.3]{CiriciGuillen}}]\label{shiftDecalageAdjunction}
  For each $r\geq 0$ there is an adjunction $\Shift^r\dashv\Dec^r$ for which the unit $\id \Rightarrow \Dec^r\circ\Shift^r$ is the identity natural transformation.\qed
\end{lemm}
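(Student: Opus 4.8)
The plan is to exploit the fact that both $\Shift^r$ and $\Dec^r$ act as the identity on underlying chain complexes and merely reindex the filtration; I therefore expect the adjunction isomorphism to be ``the identity on underlying chain maps.'' Concretely, I would show that a morphism of underlying chain complexes $h\colon A\to B$ underlies a morphism $\Shift^rA\to B$ in $\fCh$ if and only if it underlies a morphism $A\to\Dec^rB$ in $\fCh$, and that in both cases the filtration requirement collapses to the single condition $h(F_pA^n)\subseteq F_{p-rn}B^n$ for all $p$ and $n$.

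For the shift side, unwinding \cref{shiftDecalage}, a filtered map $f\colon\Shift^rA\to B$ satisfies $f(F_p(\Shift^rA)^n)=f(F_{p+rn}A^n)\subseteq F_pB^n$, which after the substitution $q=p+rn$ reads $f(F_qA^n)\subseteq F_{q-rn}B^n$. For the d\'ecalage side, recall that $Z_r^{p-rn,p-rn+n}(B)$ consists of those $y\in F_{p-rn}B^n$ with $dy\in F_{p-rn-r}B^{n+1}$; hence a filtered map $g\colon A\to\Dec^rB$ must satisfy both $g(F_pA^n)\subseteq F_{p-rn}B^n$ and the cycle condition $d(g(x))\in F_{p-rn-r}B^{n+1}$ for $x\in F_pA^n$. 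The crucial step is that this cycle condition is automatic: since $g$ is a chain map $d(g(x))=g(dx)$ with $dx\in F_pA^{n+1}$, and the filtration condition applied in cohomological degree $n+1$ already gives $g(F_pA^{n+1})\subseteq F_{p-r(n+1)}B^{n+1}=F_{p-rn-r}B^{n+1}$. Thus both morphism conditions collapse to the same requirement, so the identity on underlying chain maps furnishes a natural bijection $\Hom_{\fCh}(\Shift^rA,B)\cong\Hom_{\fCh}(A,\Dec^rB)$, establishing $\Shift^r\dashv\Dec^r$.

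To identify the unit, I would run the same reindexing on the composite $\Dec^r\Shift^r$: one computes $F_p(\Dec^r\Shift^rA)^n=\{x\in F_{p-rn}(\Shift^rA)^n : dx\in F_{p-rn-r}(\Shift^rA)^{n+1}\}$, and since $F_{p-rn}(\Shift^rA)^n=F_pA^n$ while $F_{p-rn-r}(\Shift^rA)^{n+1}=F_pA^{n+1}$, the cycle condition is vacuous and $\Dec^r\Shift^rA=A$ on the nose, so $\Dec^r\circ\Shift^r=\id_{\fCh}$. Under the bijection above the unit $\eta_A$ corresponds to $\id_{\Shift^rA}$, so on underlying complexes $\eta_A=\id_A$; as its target is literally $A$, the unit is the identity natural transformation.

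The only real obstacle is keeping the bookkeeping of the three interacting indices aligned---the filtration index $p$, the cohomological degree $n$, and the cycle-lowering parameter $r$---so that the d\'ecalage cycle condition and the shift reindexing match exactly; the content of the lemma is precisely that the d\'ecalage filtration $Z_r^{p-rn,\,\cdot}$ is calibrated to make the extra cycle constraint vacuous after passing through a filtration-lowering chain map. An alternative, equivalent route is to reduce to $r=1$ using that $\Shift^r$ and $\Dec^r$ are $r$-fold composites of $\Shift$ and $\Dec$, together with the observation that the unit of a composite of adjunctions each having identity unit is again the identity.
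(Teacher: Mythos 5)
Your proof is correct: the reindexing $f(F_{p+rn}A^n)\subseteq F_pB^n \Leftrightarrow f(F_qA^n)\subseteq F_{q-rn}B^n$ on the shift side, the observation that the d\'ecalage cycle condition is automatic for a filtration-lowering chain map (via $dg(x)=g(dx)$ and the degree-$(n+1)$ containment), and the computation $F_p(\Dec^r\Shift^rA)^n=F_pA^n\cap d^{-1}F_pA^{n+1}=F_pA^n$ together establish the adjunction with identity unit exactly as claimed. The paper gives no proof of its own, deferring to \cite{CiriciGuillen}, and your direct verification is essentially the standard argument found there, so there is nothing to flag.
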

\subsection{Bicomplexes}
\begin{defi}\label{bicomplex}
  A \textit{bicomplex} $A$ over $R$ is a bigraded collection of $R$-modules $A^{i,j}$ for $i,j\in\Zbb$ and differentials $d_0\colon A^{i,j}\rightarrow A^{i,j+1}$ and $d_1\colon A^{i,j}\rightarrow A^{i-1,j}$ which square to zero $d_0d_0=0$, $d_1d_1=0$ and are required to commute $d_0d_1=d_1d_0$. 
\end{defi}
\begin{defi}\label{bicompCat}
  The \textit{category of bicomplexes} with objects and morphisms given as in \cref{bicomplex} will be denoted $\fCh$.
\end{defi}
\begin{defi}
  The \textit{product totalisation functor} $\Totp\colon\bCh\rightarrow\fCh$ is defined on a bicomplex $A$ by:
  \begin{equation*}
    \Totp(A)^n\coloneqq \prod_{i\in\Zbb}A^{i,i+n}
  \end{equation*}
  with filtration given by:
  \begin{equation*}
    F_p\Totp(A)\coloneqq \prod_{i\leq p}A^{i,i+n}
  \end{equation*}
  and differential $d^{\Totp}$ on an element $(a_i)_i\in\Totp(A)^n$ given by:
  \begin{equation*}
    d^{\Totp}\colon(a_i)_i\mapsto(d_0a_i+(-1)^nd_1a_{i+1})_i\;.
  \end{equation*}
  On a morphism $f\colon A\rightarrow B$ we have $\Totp(f)^n\coloneqq \prod_{i\in\Zbb}f^{i,i+n}$.
\end{defi}
Similarly there is a \textit{coproduct totalisation functor} denoted $\Totc$ obtained by replacing the product $\prod$ by the coproduct $\bigoplus$ in the preceding definition.
\begin{defi}
  Let $A\in\bCh$. We define the \textit{$r$-suspension} $\rSusp A$ and \textit{$r$-loops} $\rLoops A$ to be the bicomplexes with:
  \begin{align*}
    \left(\rSusp A\right)^{p,q}&\coloneqq A^{p-r,q-r+1}\\
    \left(\rLoops A\right)^{p,q}&\coloneqq A^{p+r,q+r-1}
  \end{align*}
  and whose differentials are $d_0^{\rLoops A}=d_0^{\rSusp A}=(-1)^{r+1}d_0^A$ and $d_1^{\rLoops A}=d_1^{\rSusp A}=(-1)^{r}d_0^A$.
\end{defi}
The following lemma is proved in the same way as \cite[Lemma 2.3.2]{H}.
\begin{lemm}\label{smallBicomplexes}
  Every object of $\bCh$ is small relative to the whole category.
  The finite objects are those objects that are bounded from above, below and from the right and left which are bidegreewise finitely generated $R$-modules. \qed
\end{lemm}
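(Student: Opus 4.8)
The plan is to mirror the proof of \cite[Lemma 2.3.2]{H} together with \cref{smallFilteredChains} and \cref{FiniteFilteredChains}, exploiting that $\bCh$ is a category of diagrams of $R$-modules indexed by the \emph{countable} set $\Zbb\times\Zbb$ equipped with the two differentials. For smallness I would first record that every $R$-module is small in $R\mathdash\mathrm{Mod}$ and then promote this bidegreewise. Given $A\in\bCh$, I would choose a regular cardinal $\kappa$ exceeding $\aleph_0$, $\abs{R}$ and $\sup_{i,j}\abs{A^{i,j}}$ (a supremum over the countable set $\Zbb\times\Zbb$, so such $\kappa$ exists). For any $\kappa$-filtered $\lambda$-sequence $\{B_\beta\}$ I would run the standard element chase: a morphism $A\to\colim_\beta B_\beta$ sends each of the at most $\kappa$ elements of $A$ into some stage, $\kappa$-filteredness produces a common stage through which they all factor, and the finitely many differential-compatibility equations attached to each element are then satisfied at a later stage. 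Hence $\Hom_{\bCh}(A,-)$ commutes with $\kappa$-filtered colimits, so $A$ is small. (One could instead invoke local presentability of $\bCh$, but the element chase is what directly parallels \cite{H}.)

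For the characterisation of finite objects I would prove the two implications separately. In the easy direction, suppose $A$ is supported on finitely many bidegrees with each $A^{i,j}$ finitely presented (matching the corresponding condition of \cref{FiniteFilteredChains}). Then $\Hom_{\bCh}(A,B)$ is a finite limit: a finite product of the modules $\Hom_R(A^{i,j},B^{i,j})$ cut out by the finitely many equalisers imposing compatibility with $d_0$ and $d_1$. Since finite limits commute with filtered colimits in $R\mathdash\mathrm{Mod}$, and since each $\Hom_R(A^{i,j},-)$ commutes with filtered colimits because $A^{i,j}$ is finitely presented, it follows that $\Hom_{\bCh}(A,-)$ commutes with filtered colimits, so $A$ is finite.

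The converse is where the work lies, and I expect it to be the main obstacle. Given a finite $A$, I would express it as the filtered colimit $A=\colim_\alpha A_\alpha$ of its sub-bicomplexes generated by finitely many elements. The key computation is that such an $A_\alpha$ is automatically bounded in all four directions and bidegreewise finitely generated: starting from $a\in A^{i,j}$ one only produces $d_0a\in A^{i,j+1}$, $d_1a\in A^{i-1,j}$ and $d_0d_1a=d_1d_0a\in A^{i-1,j+1}$, the relations $d_0^2=d_1^2=0$ confining the generated sub-bicomplex to the four bidegrees $(i,j)$, $(i,j+1)$, $(i-1,j)$, $(i-1,j+1)$. Since $A$ is finite, $\id_A\in\Hom_{\bCh}(A,A)=\colim_\alpha\Hom_{\bCh}(A,A_\alpha)$ factors as $A\to A_\alpha\hookrightarrow A$; as the second map is the inclusion, it is simultaneously a split epimorphism and a monomorphism, hence an isomorphism in the abelian category $\bCh$, forcing $A=A_\alpha$. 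Therefore $A$ is bounded in all four directions and bidegreewise finitely generated, as claimed.

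Two points then need care. First, Hovey's $\lambda$-sequence formulation of smallness and finiteness must be reconciled with the directed colimit over finitely generated sub-bicomplexes used above; adding generators one at a time realises the relevant colimit as a suitably long sequence of monomorphisms. Second, there is the distinction between finitely generated and finitely presented: commutation of $\Hom_R(A^{i,j},-)$ with filtered colimits in the easy direction genuinely requires finite presentation, so over a general ring $R$ the finiteness condition should be read as bidegreewise finite presentation, in agreement with condition~(1) of \cref{FiniteFilteredChains}.
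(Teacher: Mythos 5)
Your overall route is exactly the one the paper intends: the paper's ``proof'' of \cref{smallBicomplexes} is simply the remark that it goes as in \cite[Lemma 2.3.2]{H}, and your element chase for smallness, the finite-limit argument for the easy direction, and the observation that a single element $a\in A^{i,j}$ generates a sub-bicomplex supported on the four bidegrees $(i,j)$, $(i,j+1)$, $(i-1,j)$, $(i-1,j+1)$ are all correct adaptations of that argument (indeed more detailed than the paper, since Hovey only proves the ``if'' direction of the finiteness characterisation). Your closing remark that ``finitely generated'' in the statement should be read as ``finitely presented'' over a general ring, in line with condition (1) of \cref{FiniteFilteredChains}, is also well taken.

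There is, however, one genuine soft spot in your converse, precisely at the point you flag. Realising $A$ as a transfinite sequence by ``adding generators one at a time'' does produce a $\lambda$-sequence of monomorphisms with colimit $A$, but the stage $B_\beta$ through which $\id_A$ then factors is generated by \emph{all} $\beta$ predecessors, so for infinite $\beta$ you conclude only that $A$ is generated by fewer than $\lambda$ elements, not by finitely many; and you cannot instead take a $\lambda$-sequence cofinal in the poset of finitely generated sub-bicomplexes, since such a cofinal chain need not exist (the finite subsets of an uncountable set admit no cofinal chain). The standard repair is a minimality argument: let $\mu$ be the least cardinal of a generating set of $A$ and suppose $\mu$ is infinite; writing the generating set as a continuous increasing union of $\mathrm{cf}(\mu)$ subsets each of size $<\mu$ yields a $\mathrm{cf}(\mu)$-sequence of sub-bicomplexes, each generated by $<\mu$ elements, with colimit $A$; since $\mathrm{cf}(\mu)$ is regular, finiteness of $A$ factors $\id_A$ through a stage, and your split-mono-epi argument then exhibits a generating set of size $<\mu$, contradicting minimality. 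Hence $\mu$ is finite, and the rest of your argument (the four-bidegree support computation and bidegreewise finite generation) goes through as written.
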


\subsection{Spectral sequences and $r$-cones}
\begin{defi}
  A \textit{spectral sequence} is a sequence of $(\Zbb,\Zbb)$-bigraded differential modules $\left\{E_r^{\bullet,\bullet}, d_r\right\}$ for $r\geq 0$ such that the differentials $d_r$ are of bidegree $(-r,1-r)$, $d_r\colon E_r^{p,q}\rightarrow E_r^{p-r,q+r-1}$, and further that:
  \begin{equation*}
    E_{r+1}^{p,q}\cong H^{p,q}(E_{r}^{\bullet,\bullet},d_R)
    =\frac{\ker\left(d_r\colon E_r^{p,q}\rightarrow E_r^{p-r,q+1-r}\right)}
    {\im\left(d_r\colon E_r^{p+r,q+1-r}\rightarrow E_r^{p,q}\right)}
  \end{equation*}
  for each $r\geq 0$. We refer to the $E_r^{\bullet,\bullet}$ as the \textit{$r$-page} of the spectral sequence.
\end{defi}
In \cite{CELW} the authors describe a setup for spectral sequences of filtered chain complexes and bicomplexes such that the cycle and boundary functors are representable.
We recall their definitions here.

\begin{defi}\label{associatedSS}
  Let $A$ be a filtered chain complex. We define in bidegree $(p,p+n)$:
  \begin{itemize}
  \item for $r\geq 0$ the \textit{$r$-cycles} of $A$ as:
    \begin{equation*}
      Z_r^{p,p+n}(A)\coloneqq F_pA^n\cap d^{-1}F_{p-r}A^{n+1}\,,
    \end{equation*}
  \item for $r=0$ the \textit{$r$-boundaries} of $A$ as:
    \begin{equation*}
      B_0^{p,p+n}(A)\coloneqq Z_0^{p-1,p-1+n}(A) = F_{p-1}A^n\,,
    \end{equation*}
  \item and for $r\geq 1$ the \textit{$r$-boundaries} of $A$ in bidegree $(p,p+n)$ as:
    \begin{equation*}
      B_r^{p,p+n}(A)\coloneqq dZ_{r-1}^{p+r-1,p+r-1+n+1}(A)+Z_{r-1}^{p-1,p-1+n}(A)\,.
    \end{equation*}
  \end{itemize}
  The functors $Z_r^{p,p+n}$ and $B_r^{p,p+n}$ are representable in $\fCh$ with representing objects:
  \begin{align*}
    \Z_r(p,n)&\coloneqq \left(R_{(p)}^n\rightarrow R_{(p-r)}^{n+1}\right)\\
    \B_r(p,n)&\coloneqq \left(R_{(p+r-1)}^{n-1}\rightarrow R_{(p)}^n\right)\oplus
               \left(R_{(p-1)}^n\rightarrow R_{(p-r)}^{n+1}\right)
  \end{align*}
  and there is a morphism $\phi_r\colon\Z_r(p,n)\rightarrow\B_r(p,n)$ which is the diagonal in degree $n$ and the identity in degree $n+1$ whenever possible.
  This then induces a map $w_{r}\colon B_r^{p,p+n}(A)\rightarrow Z_r^{p,p+n}(A)$.
  The \textit{$r$-page of the associated spectral sequence} denoted $E_r^{p,p+n}(A)$ is then given by:
  \begin{equation*}
    E_r^{p,p+n}(A)\coloneqq \frac{Z_r^{p,p+n}(A)}{B_r^{p,p+n}(A)}
    =\frac{\Hom_{\fCh}(\Z_r(p,n),A)}{\Hom_{\fCh}(\B_r(p,n),A)}\;.
  \end{equation*}
  The differential $d$ restricts to $r$-cycles and there is then an induced differential on the $r$-page sending a class $[a]$ to $[da]$.
\end{defi}

\begin{defi}
  Given a morphism $f\colon A\rightarrow B$ of $\fCh$ we will say $f$ is \textit{$Z_k$-bidegreewise surjective (resp.\ injective)} if $Z_k^{p,p+n}(f)\colon Z_k^{p,p+n}(A)\rightarrow Z_k^{p,p+n}(B)$ is surjective (resp.\ injective) for all $p,n\in\Zbb$ and similarly with the functors $B_k^{\ast,\ast}$ and $E_k^{\ast,\ast}$ as well.
\end{defi}

\begin{defi}
  The class of morphisms of filtered chain complexes inducing an isomorphism between the $(r+1)$-pages of the associated spectral sequences is denoted $\Er$.
  These will be called the \textit{$r$-quasi-isomorphisms} or \textit{$r$-weak equivalences}.
\end{defi}

One can immediately obtain spectral sequences given a bicomplex $A$ via the $\Totp$ functor and taking the above cycles and boundaries of $\Totp(A)$.
However we need representing objects in $\bCh$ so the authors of \cite{CELW} instead define \textit{$r$-witness cycles} and \textit{$r$-witness boundaries} whose quotients yield the same spectral sequence.

\begin{defi}
  Let $A$ be a bicomplex. We define in bidegree $(p,p+n)$:
  \begin{itemize}
  \item for $r=0$ the \textit{$r$-witness cycles} of $A$ as:
    \begin{equation*}
      ZW_0^{p,p+n}(A)\coloneqq A^{p,p+n},
    \end{equation*}
  \item for $r>0$ the \textit{$r$-witness cycles} of $A$ as:
    \begin{align*}
      ZW_r^{p,p+n}(A)\coloneqq \left\{(a_0,a_1,\ldots,a_{r-1})\in
        \left.\bigoplus_{k=0}^{r-1}A^{p-k,p+n-k}\,\right|\,\right.
      &d_0a_0=0,\\
      &\left.d_0a_k=d_1a_{k-1} \text{ for } k\geq 1
        \vphantom{\bigoplus_{k=0}^{r-1}}\right\}\,,
    \end{align*}
  \item for $r=0$ the \textit{$r$-witness boundaries} of $A$ as:
    \begin{equation*}
      BW_0^{p,p+n-1}(A)\coloneqq 0,
    \end{equation*}
  \item for $r=1$ the \textit{$r$-witness boundaries} of $A$ as:
    \begin{equation*}
      BW_1^{p,p+n-1}(A)\coloneqq A^{p,p+n-1},
    \end{equation*}
  \item for $r\geq 2$ the \textit{$r$-witness boundaries} of $A$ as:
    \begin{equation*}
      BW_r^{p,p+n-1}(A)\coloneqq ZW_{r-1}^{p+r-1,p+r+n-2}(A)\oplus A^{p,p+n-1}\oplus
      ZW_{r-1}^{p-1,p-1+n}(A)\,.
    \end{equation*}
  \end{itemize}
  The functors $ZW_r^{p,p+n}$ are representable with representing objects $\ZW_r(p,p+n)$ as indicated in \cref{figu:ZWr} with $\bullet$ labelling a copy of $R$, arrows indicating identity differentials and bidegrees of some components specified.
  Similarly the $BW_r^{p,p+n-1}$ are represented by $\BW_r(p,p+n-1)$ which are direct sums of representing witness cycles.
  \begin{align*}
    \BW_0(p,p+n-1)\coloneqq& 0\\
    \BW_1(p,p+n-1)\coloneqq& \ZW_0(p,p+n-1)\\
    \BW_r(p,p+n-1)\coloneqq&\ZW_{r-1}(p+r-1,p+r+n-2)\\
                         &\oplus\ZW_0(p,p+n-1)\oplus\ZW_{r-1}(p-1,p-1+n)
  \end{align*}
\begin{figure}[h]
  \centering
  \begin{tikzpicture}
    \node (a0) at (0,0) {};
    \filldraw (a0) circle (2pt);
    \node (a1) at (-1,-1) {};
    \filldraw (a1) circle (2pt);
    \node (a2) at (-2,-2) {};
    \filldraw (a2) circle (2pt);
    \node (a3) at (-3,-3) {};
    \filldraw (a3) circle (2pt);
    \node (a4) at (-4,-4) {};
    \filldraw (a4) circle (2pt);
    \node (b0) at (-1,0) {};
    \filldraw (b0) circle (2pt);
    \node (b1) at (-2,-1) {};
    \filldraw (b1) circle (2pt);
    \node (b3) at (-4,-3) {};
    \filldraw (b3) circle (2pt);
    \node (b4) at (-5,-4) {};
    \filldraw (b4) circle (2pt);
    \draw[->] (a0) node[anchor=north west] {$(p,p+n)$} -- (b0);
    \draw[->] (a1) -- (b1);
    \draw[->] (a3) -- (b3);
    \draw[->] (a4) -- (b4);
    \draw[->] (a1) -- (b0);
    \draw[->] (a2) -- (b1);
    \draw[->] (a4) node[anchor=north west] {$(p-r+1,p-r+1+n)$} -- (b3);
    \node (a) at (-2.5,-2.5) {$\iddots$};
    \node (b) at (-3,-2) {$\iddots$};
    \node (00) at (-7,-2) {};
    \filldraw (00) circle (2pt);
    \node (-10) at (-8,-2) {};
    \filldraw (-10) circle (2pt);
    \node (01) at (-7,-1) {};
    \filldraw (01) circle (2pt);
    \node (-11) at (-8,-1) {};
    \filldraw (-11) circle (2pt);
    \draw[->] (00) node[anchor=north west] {$(p,p+n)$} -- (-10);
    \draw[->] (00) -- (01);
    \draw[->] (01) -- (-11);
    \draw[->] (-10) -- (-11);
  \end{tikzpicture}
  \caption{The bicomplexes $\ZW_0(p,p+n)$ and $\ZW_r(p,p+n)$ for $r\geq 1$}
  \label{figu:ZWr}
\end{figure}

There are morphisms $\phi\colon\ZW_r(p,p+n)\rightarrow\BW_r(p,p+n-1)$ which are bidegreewise firstly the diagonal map wherever possible, secondly the identity map wherever possible, and otherwise $0$.
These were defined in \cite[Definition 4.9]{CELW}.
These then induce maps $w_r\colon BW_r^{p,p+n-1}(A)\rightarrow ZW_r^{p,p+n}(A)$.
  The \textit{$r$-page of the associated spectral sequence} also denoted $E_r^{p,p+n}(A)$ is then given by
  \begin{equation*}
    E_r^{p,p+n}(A)\coloneqq \frac{ZW_r^{p,p+n}(A)}{w_r\left(BW_r^{p,p+n-1}(A)\right)}
    =\frac{\Hom_{\bCh}(\ZW_r(p,p+n),A)}{\phi^\ast\Hom_{\bCh}(\BW_r(p,p+n-1),A)}\;.
  \end{equation*}
  The differential $d$ restricts to $r$-cycles and there is then an induced differential on the $r$-page sending a class $[(a_0,a_1,\ldots,a_{r-1})]$ to $[(d_1a_{r-1},0,0,\ldots,0)]$.
\end{defi}

\begin{defi}
  Given a morphism $f\colon A\rightarrow B$ of $\bCh$ we will say $f$ is \textit{$ZW_k$-bidegreewise surjective (resp.\ injective)} if $ZW_k^{p,p+n}(f)\colon ZW_k^{p,p+n}(A)\rightarrow ZW_k^{p,p+n}(B)$ is surjective (resp.\ injective) for all $p,n\in\Zbb$ and similarly with the functors $BW_k^{\ast,\ast}$ and $E_k^{\ast,\ast}$ as well.
\end{defi}

\begin{defi}
  The class of morphisms of bicomplexes inducing an isomorphism between the $(r+1)$-pages of the associated spectral sequences is denoted $\Er$.
  These will be called the \textit{$r$-quasi-isomorphisms} or \textit{$r$-weak equivalences}.
\end{defi}

\begin{defi}
  A filtered chain complex $A$ is said to be \textit{$r$-acyclic} if the $(r+1)$-page of its associated spectral sequence is $0$.
\end{defi}
  
There are $r$-homotopical analogues of the cone construction in chain complexes.
The following appears as \cite[Definition 3.5]{CELW} although with different sign conventions.
\begin{defi}\label{rConeFilteredChains}
  For a morphism $f\colon A\rightarrow B$ of filtered chain complexes the \textit{$r$-cone} of $f$ denoted $C_r(f)$ is the filtered chain complex with underlying filtered graded module given by $\rSusp A\oplus B$, in particular $F_pC_r(f)\coloneqq F_{p-r}A^{n+1}\oplus F_pB^n$ and differential given by $d\colon (a,b) \mapsto (-da,fa+db)$.
  We denote by $C_r(A)$ the $r$-cone of the identity morphism $\id\colon A\rightarrow A$. 
\end{defi}

\begin{lemm}[{\cite[Remark 3.6]{CELW}}]\label{rConeIsrAcyclic}
  Let $f\colon A\rightarrow B$ be a morphism of filtered chain complexes.
  Then $f\in\Er$ if and only if $C_r(f)$ is $r$-acyclic.\qed
\end{lemm}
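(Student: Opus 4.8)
The plan is to realise the $r$-cone as a filtration-split short exact sequence and then to identify the resulting effect on the associated spectral sequences. First I would observe that, by the very definition of the differential in \cref{rConeFilteredChains}, the inclusion $b\mapsto(0,b)$ exhibits $B$ as a subcomplex of $C_r(f)$ with quotient $\rSusp A$, giving a short exact sequence of filtered chain complexes
\begin{equation*}
  0\longrightarrow B\longrightarrow C_r(f)\longrightarrow\rSusp A\longrightarrow 0
\end{equation*}
which is split in each filtration degree $p$ and each cohomological degree $n$, since $F_pC_r(f)^n=F_{p-r}A^{n+1}\oplus F_pB^n$ is literally a direct sum. I would also record, from \cref{rSuspensionAndLoopsFilteredChains}, that the $r$-suspension merely reindexes cycles and boundaries, so that $Z_s^{p,p+n}(\rSusp A)=Z_s^{p-r,(p-r)+(n+1)}(A)$ and hence $E_s^{p,p+n}(\rSusp A)\cong E_s^{p-r,(p-r)+(n+1)}(A)$ compatibly (up to sign) with the page differentials; in particular $\rSusp A$ is $r$-acyclic if and only if $A$ is.

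Next I would track where the map $f$ enters the spectral sequence of $C_r(f)$. A direct computation from \cref{associatedSS} shows the $r$-cycles split as
\begin{equation*}
  Z_r^{p,p+n}(C_r(f))=Z_r^{p-r,(p-r)+(n+1)}(A)\oplus Z_r^{p,p+n}(B),
\end{equation*}
the point being that the condition $d(a,b)\in F_{p-r}C_r(f)^{n+1}$ decouples into $a$ being an $r$-cycle of $A$ and $b$ an $r$-cycle of $B$, because $fa\in F_{p-r}B^{n+1}$ automatically. The crucial observation is that the off-diagonal term $a\mapsto fa$ of the cone differential lowers the cone filtration by exactly $r$: an element of cone-filtration $p$ supported on the $A$-summand is sent into cone-filtration $p-r$ of the $B$-summand. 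Since the page differential $d_s$ is detected by filtration drops of exactly $s$, the contribution of $f$ is invisible on the pages $E_0,\dots,E_{r-1}$ and first appears in $d_r$. Consequently the split short exact sequence yields
\begin{equation*}
  E_s^{p,p+n}(C_r(f))\cong E_s^{p,p+n}(\rSusp A)\oplus E_s^{p,p+n}(B)\qquad(0\leq s\leq r)
\end{equation*}
with diagonal page differential for $s<r$, whereas for $s=r$ the differential is
\begin{equation*}
  d_r[(a,b)]=\bigl(-d_r[a],\,f_\ast[a]+d_r[b]\bigr).
\end{equation*}

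This says precisely that $\bigl(E_r^{\ast,\ast}(C_r(f)),d_r\bigr)$ is the algebraic mapping cone of the chain map $f_\ast\colon\bigl(E_r^{\ast,\ast}(A),d_r\bigr)\to\bigl(E_r^{\ast,\ast}(B),d_r\bigr)$, after the harmless reindexing of the first summand coming from $\rSusp$. Taking $d_r$-homology and invoking the long exact sequence of a mapping cone, I would conclude that $E_{r+1}^{\ast,\ast}(C_r(f))=0$ if and only if $f_\ast\colon E_{r+1}^{\ast,\ast}(A)\to E_{r+1}^{\ast,\ast}(B)$ is an isomorphism, that is, $C_r(f)$ is $r$-acyclic if and only if $f\in\Er$, as required.

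I would expect the main obstacle to be the bookkeeping in the middle step: verifying rigorously that the $f$-term contributes to none of $d_0,\dots,d_{r-1}$ and contributes exactly $f_\ast$ to $d_r$, with all filtration indices and suspension signs matched so that $f_\ast[a]$ lands in the intended bidegree of the $B$-page. Everything else is either the formal mapping-cone long exact sequence or the elementary reindexing of $\rSusp$.
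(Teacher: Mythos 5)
Your argument is correct, but note that the paper itself contains no proof of this lemma to compare against: it is quoted verbatim from \cite[Remark 3.6]{CELW} with a \qed, the proof being deferred to that reference. Your route --- the filtration-split short exact sequence $0\to B\to C_r(f)\to\rSusp A\to 0$, the observation that the cross-term $a\mapsto fa$ drops the cone filtration by exactly $r$ and hence is invisible on $E_0,\dots,E_{r-1}$ and contributes exactly $f_\ast$ to $d_r$, and the mapping-cone long exact sequence on $\bigl(E_r,d_r\bigr)$ --- is precisely the standard reasoning behind that remark, and all the key computations check out against the paper's conventions (in particular $d(a,b)=(-da,fa+db)$ and $F_pC_r(f)^n=F_{p-r}A^{n+1}\oplus F_pB^n$). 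Two small points you flag as bookkeeping deserve explicit mention. First, besides the $r$-cycles you must also check the $r$-boundaries split, $B_r^{p,p+n}(C_r(f))=B_r(\rSusp A)\oplus B_r(B)$ in the matching bidegrees; this works because for $(a,b)\in Z_{r-1}$ one has $d(a,b)=(-da,db)+(0,fa)$ with $fa\in Z_{r-1}^{p-1,p-1+n}(B)$, the filtration drop $r$ exceeding the drop $r-1$ detected by $Z_{r-1}$, so the cross-term is absorbed into the boundary submodule on both sides of the comparison. Second, your final step uses $E_{r+1}\cong H(E_r,d_r)$ for the cone; this is the standard property of the spectral sequence of a filtered complex and is consistent with the paper's description of $E_{r+1}$ as $Z_{r+1}/B_{r+1}$ with induced differential $[z]\mapsto[dz]$, so the long exact sequence of the algebraic mapping cone indeed gives $E_{r+1}(C_r(f))=0$ if and only if $E_{r+1}(f)$ is an isomorphism, i.e.\ $f\in\Er$.
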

\begin{lemm}[{\cite[Notation 3.7]{CELW}}]\label{loopFibrationsSurjective}
  There is a morphism of filtered chain complexes $\pi_1\colon \rLoops C_r(A)\rightarrow A$ for which $Z_k(\pi_1)$ is bidegreewise surjective for $0\leq k\leq r$.\qed
\end{lemm}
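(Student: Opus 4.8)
The plan is to make the source and target completely explicit, read off $\pi_1$ as a projection, and then prove the cycle-surjectivity by exhibiting a constant lift. First I would unwind the definitions of \cref{rConeFilteredChains} and \cref{rSuspensionAndLoopsFilteredChains}. As a filtered graded module the cone has $C_r(A)^n = A^{n+1}\oplus A^n$ with $F_pC_r(A)^n = F_{p-r}A^{n+1}\oplus F_pA^n$ and differential $(a,b)\mapsto(-da,\,a+db)$. Applying $\rLoops$, which lowers cohomological degree by one, adds $r$ to the filtration index, and negates the differential once, gives
\[
\left(\rLoops C_r(A)\right)^n = A^n\oplus A^{n-1},\qquad
F_p\left(\rLoops C_r(A)\right)^n = F_pA^n\oplus F_{p+r}A^{n-1},
\]
with differential $(a,b)\mapsto(da,\,-a-db)$. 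The first summand is $\rLoops\rSusp A\cong A$, and I would take $\pi_1$ to be the projection $(a,b)\mapsto a$ onto it.

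Second, I would verify that $\pi_1$ is a morphism of $\fCh$. Preservation of the filtration is immediate from the formula for $F_p\left(\rLoops C_r(A)\right)^n$ displayed above, since that summand decomposition is filtered. Compatibility with the differentials follows because the first component of the differential of $\rLoops C_r(A)$ is exactly $da$, so $\pi_1\circ d = d\circ\pi_1$; this is where the single sign flip introduced by $\rLoops$ is needed, as it makes the leading component $+da$ rather than $-da$.

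The heart of the argument is surjectivity of $Z_k(\pi_1)$ for $0\le k\le r$. Given $z\in Z_k^{p,p+n}(A)$, so that $z\in F_pA^n$ and $dz\in F_{p-k}A^{n+1}$ by \cref{associatedSS}, I claim the constant lift $\tilde z\coloneqq(z,0)$ lies in $Z_k^{p,p+n}\left(\rLoops C_r(A)\right)$ with $\pi_1(\tilde z)=z$. Indeed $\tilde z\in F_pA^n\oplus F_{p+r}A^{n-1}$ because $z\in F_pA^n$, and its differential is $d\tilde z=(dz,\,-z)$, which must land in $F_{p-k}A^{n+1}\oplus F_{p-k+r}A^n$. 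The first condition $dz\in F_{p-k}A^{n+1}$ holds as $z$ is a $k$-cycle, and the second condition $z\in F_{p-k+r}A^n$ holds precisely because $k\le r$ forces $p-k+r\ge p$, whence $F_{p-k+r}A^n\supseteq F_pA^n\ni z$.

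This final inequality is the whole obstruction, and it pinpoints exactly why the statement is restricted to $k\le r$: for $k>r$ one has $p-k+r<p$, the constant lift $(z,0)$ fails the second filtration condition, and there is no evident correction term $b\in F_{p+r}A^{n-1}$ with $db\equiv -z$ modulo $F_{p-k+r}A^n$. I expect the only genuinely delicate point in writing this up to be the careful bookkeeping of the degree and filtration shifts through $\rLoops$ and the cone, after which the surjectivity reduces to this single comparison of filtration indices.
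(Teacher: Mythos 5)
Your proof is correct and is essentially the argument the paper is deferring to: the paper cites \cite[Notation 3.7]{CELW}, where $\pi_1$ is exactly this projection of $\rLoops C_r(A)\cong A\oplus\rLoops A$ onto the $A$ summand (as the paper itself notes in the stability section), and the surjectivity of $Z_k(\pi_1)$ is the same direct check via the lift $(z,0)$. Your filtration bookkeeping, the sign analysis making the projection a chain map, and the identification of $k\le r$ as precisely the inequality $p-k+r\ge p$ are all accurate.
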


In \cite[\S 4.3]{CELW} the authors also construct $r$-cone objects of morphisms of bicomplexes by defining $r$-homotopical cylinder objects.
We only need the $r$-cone of the identity so we bypass these definitions and take as our definition the following.
First write $C_0(R^{0,0})$ for the bicomplex:
\begin{equation*}
  \begin{tikzcd}
    R^{0,0}\\
    R^{0,-1}\arrow[u,"1"]
  \end{tikzcd}\,.
\end{equation*}
\begin{defi}
  A bicomplex $A$ is said to be \textit{$r$-acyclic} if the $(r+1)$-page of its associated spectral sequence is $0$.
\end{defi}

\begin{defi}
  The \textit{$r$-cone} of a bicomplex $A$ is given:
  \begin{itemize}
  \item for $r=0$ by $C_0(R^{0,0})\otimes A$, and
  \item for $r\geq 1$ by $\ZW_r(r,r-1)\otimes A$.
  \end{itemize}
\end{defi}
\begin{lemm}[{\cite[Proposition 4.30]{CELW}}]\label{rConeIsAcyclicBicomplexes}
  Let $A$ be a bicomplex. Then $C_r(A)$ is $r$-acyclic.\qed
\end{lemm}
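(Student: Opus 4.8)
The plan is to unravel \emph{$r$-acyclic} as the vanishing of the $(r+1)$-page of the associated spectral sequence, and to compute that page through the product totalisation $\Totp$, whose column filtration induces the same spectral sequence as the witness cycles. I would treat the two clauses of the definition of $C_r(A)$ separately. For $r=0$ the bicomplex $C_0(R^{0,0})$ is the two-term vertical complex $R\xrightarrow{\mathrm{id}}R$, which is contractible in the $d_0$-direction; since this contraction is degreewise split, tensoring with $A$ preserves it, so the vertical ($d_0$-)homology of $C_0(R^{0,0})\otimes A$ vanishes. As the $E_1$-page is exactly this $d_0$-homology, we obtain $E_1=0$ and hence $0$-acyclicity.

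For $r\geq 1$ the essential case is the monoidal unit $A=R^{0,0}$, for which $C_r(A)=\ZW_r(r,r-1)$. Reading off the staircase of \cref{figu:ZWr}, the total complex $\Totp\ZW_r(r,r-1)$ is $R^{r}\to R^{r}$ with differential an isomorphism, so it is acyclic; its column filtration places the two survivors of the $d_0$-homology in bidegrees $(r,r-1)$ and $(0,0)$, which differ in filtration by exactly $r$. Each intermediate differential $d_1,\dots,d_{r-1}$ must vanish on these two classes for bidegree reasons, whereas $d_r$ sends the first class precisely onto the second; acyclicity of the total complex then forces $d_r$ to be an isomorphism, so $E_{r+1}=0$. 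This is the bicomplex counterpart of the filtered statement \cref{rConeIsrAcyclic}.

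The hard part is to promote this to an arbitrary bicomplex $A$. Because $\ZW_r(r,r-1)$ is a bounded complex of free $R$-modules, a Künneth argument identifies the $E_1$-page of $\ZW_r(r,r-1)\otimes A$ with two copies of the $d_0$-homology of $A$, shifted by the bidegrees $(r,r-1)$ and $(0,0)$ of the two surviving generators, with no $\mathrm{Tor}$ correction. I would then argue, using the multiplicativity of the spectral sequence of the filtered tensor product, that $d_r$ restricts on these two families to the isomorphism coming from $\ZW_r(r,r-1)$ tensored with the identity, whence $E_{r+1}=0$. The main obstacle is exactly this last step: for general $A$ a differential $d_s$ with $s<r$ could in principle connect the two families by spending part of its filtration drop on the staircase and part on the filtration jumps of $A$ itself, so one must show that the interaction of the two spectral sequences is governed by a Künneth-type comparison in which only $d_r$ is nonzero between the families. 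Controlling this tensor-product spectral sequence, rather than either factor in isolation, is where the real work lies.
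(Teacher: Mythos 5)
Your $r=0$ case and your computation for the monoidal unit are both correct, and your worry at the end is exactly the right one --- but it is not resolved, and it is the entire content of the lemma. The Künneth identification of $E_1(\ZW_r(r,r-1)\otimes A)$ with two shifted copies of $E_1(A)$ is legitimate (every column of $\ZW_r(r,r-1)$ is free with free $d_0$-homology, so there is no $\mathrm{Tor}$ term), but there is no off-the-shelf ``multiplicativity'' theorem doing what your last step needs: the spectral sequence of a filtered tensor product is not the tensor product of the spectral sequences of the factors, and the higher differentials of $\ZW_r(r,r-1)\otimes A$ are not determined pagewise by those of $\ZW_r(r,r-1)$ and $A$. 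Ruling out the cross-family components of $d_s$ for $1\leq s<r$ --- which, as you note, are not excluded by bidegree once the families are indexed by all of $E_1(A)$ --- is precisely the assertion $E_{r+1}=0$, so deferring it means the proof is not complete. For comparison, the paper itself gives no argument: the lemma is quoted directly from \cite[Proposition 4.30]{CELW}, where the verification is carried out with the witness-cycle machinery.

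The gap can be closed along your own lines by replacing the hoped-for comparison of spectral sequences with a single filtered homotopy equivalence. Each interior column of $\ZW_r(r,r-1)$ is a two-term complex $R\xrightarrow{\,1\,}R$ concentrated in one column, so the Gaussian-elimination homotopy contracting these pairs \emph{preserves the column filtration}; hence $\Totp\ZW_r(r,r-1)$ is homotopy equivalent in $\fCh$, via filtration-preserving maps and homotopies, to the two-generator object $\Z_r(r,-1)=\left(R_{(r)}^{-1}\rightarrow R_{(0)}^{0}\right)$ spanned by your two survivors in bidegrees $(r,r-1)$ and $(0,0)$. Since $\ZW_r(r,r-1)$ is bounded with finitely generated free entries, one has $\Totp\left(\ZW_r(r,r-1)\otimes A\right)\cong\Totp\ZW_r(r,r-1)\otimes\Totp A$ as filtered complexes, and $\Z_r(r,-1)\otimes\Totp A$ is exactly the filtered $r$-cone $C_r(\id_{\Totp A})$ of \cref{rConeFilteredChains}. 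A filtration-preserving homotopy equivalence induces chain homotopy equivalences on associated graded pieces, hence isomorphisms of spectral sequences from the $E_1$-page onwards, so $E_{r+1}(C_r(A))\cong E_{r+1}(C_r(\id_{\Totp A}))=0$ by \cref{rConeIsrAcyclic} applied to the identity map. This packages your two families, together with the absence of $d_s$-interaction for $s<r$, into one honest filtered equivalence, instead of an appeal to a multiplicativity statement that is unavailable in this generality.
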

\begin{lemm}[{\cite[Proposition 4.32]{CELW}}]\label{loopFibrationSurjectiveBicomplexes}
  \sloppy There is a morphism of bicomplexes $\psi_r\colon C_r(A)\rightarrow \rSusp A$ for which $\ZW_k(\psi_r)$ is bidegreewise surjective for all $0\leq k\leq r$.\qed
\end{lemm}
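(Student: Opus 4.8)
\medskip
\noindent\textbf{Proof proposal.} The plan is to realise $\psi_r$ as an explicit projection and to prove surjectivity on witness cycles by writing down a lift that ``staircases'' a witness cycle down the diagonal of $\ZW_r(r,r-1)$. First I would unwind the $r$-cone. For $r\geq 1$ we have $C_r(A)=\ZW_r(r,r-1)\otimes A$, and since $\ZW_r(r,r-1)$ is a direct sum of copies of $R$ this is a direct sum of shifted copies of $A$, one per generator. By \cref{figu:ZWr} the generators form a diagonal $\alpha_0,\dots,\alpha_{r-1}$ in bidegrees $(r-m,r-1-m)$, together with connecting and output copies $c_1,\dots,c_r$ in bidegrees $(r-m,r-m)$, with identity differentials $d_1\alpha_m=c_{m+1}$ and $d_0\alpha_m=c_m$ (while $d_0\alpha_0=0$ and each $c_m$ is a sink). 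The summand indexed by the top generator $\alpha_0$ in bidegree $(r,r-1)$ is canonically $\rSusp A$, and I would take $\psi_r$ to be the projection onto it; for $r=0$ one projects $C_0(R^{0,0})\otimes A$ onto the copy in bidegree $(0,-1)$, which is $\rSusp A=\Sigma^0 A$.

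To see $\psi_r$ is a morphism of bicomplexes, note that no differential of $\ZW_r(r,r-1)$ has $\alpha_0$ as its target, as this would require an occupied generator in bidegree $(r,r-2)$ or $(r+1,r-1)$, and there is none. Hence on the $\alpha_0$-summand the cone differential reduces to $\pm d^A$, and the Koszul signs are reconciled exactly by the suspension conventions $d_0^{\rSusp A}=(-1)^{r+1}d_0^A$ and $d_1^{\rSusp A}=(-1)^{r}d_1^A$.

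The heart of the argument is surjectivity of $ZW_k^{p,p+n}(\psi_r)$ for $0\leq k\leq r$. Given a $k$-witness cycle $(b_0,\dots,b_{k-1})$ in $\rSusp A$, I would lift it by the ansatz $x_l\coloneqq\sum_{m=0}^{l}\left(b_{l-m}\text{ in the summand }\alpha_m\right)$, with all $c$-components zero and $b_j$ read as $0$ for $j<0$ (so that $x_{-1}=0$). A direct check then shows $d_0^{C_r}x_l=d_1^{C_r}x_{l-1}$ for all $l$, which for $l=0$ reads $d_0^{C_r}x_0=0$: the $c_m$-components of both sides equal $b_{l-m}$ by the identity differentials $d_0\alpha_m=c_m$ and $d_1\alpha_{m-1}=c_m$, while the $\alpha_m$-components, namely $\pm d_0^A b_{l-m}$ and $\pm d_1^A b_{l-m-1}$, agree by the witness-cycle relation $d_0^{\rSusp A}b_{l-m}=d_1^{\rSusp A}b_{l-m-1}$ (the case $m=l$ using $d_0^{\rSusp A}b_0=0$). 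Since $\psi_r(x_l)=b_l$, this produces the desired preimage and hence surjectivity.

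The main obstacle, and the only place the hypothesis is used, is essentially bookkeeping: the ansatz for $x_l$ invokes the generators $\alpha_0,\dots,\alpha_l$, and as $l$ runs up to $k-1$ this stays within the diagonal $\alpha_0,\dots,\alpha_{r-1}$ precisely when $k\leq r$. Carrying this out rigorously requires tracking the interaction of the combinatorial identity differentials of $\ZW_r(r,r-1)$ with the internal differential of $A$, and checking all Koszul signs simultaneously across the whole tuple $(x_0,\dots,x_{k-1})$; the acyclicity recorded in \cref{rConeIsAcyclicBicomplexes} serves as a useful sanity check that such lifts must exist.
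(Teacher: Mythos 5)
Your proposal is correct and coincides with the construction the paper itself relies on: the paper defers the proof of this lemma to \cite[Proposition 4.32]{CELW}, but in \cref{stability} it identifies $\psi_r$ as $\pi\otimes\id_A$ for the projection $\pi\colon\ZW_r(r,r-1)\rightarrow R^{r,r-1}$, which is exactly your projection onto the $\alpha_0$-summand, and your staircase lift $x_l=\sum_{m=0}^{l}b_{l-m}$ placed in the $\alpha_m$-summands is the standard explicit verification of $ZW_k$-surjectivity, with the hypothesis $k\leq r$ entering precisely where you locate it. The one step you defer, the Koszul-sign check, is genuinely routine and closes: writing the lift as $x_l=\sum_{m}\epsilon_{l,m}\,b_{l-m}$ with $\epsilon_{l,0}=1$, the $c_m$-components of $d_0x_l=d_1x_{l-1}$ force $\epsilon_{l,m}=\epsilon_{l-1,m-1}$ while the $\alpha_m$-components, after substituting the witness relation $d_0^{\rSusp A}b_j=d_1^{\rSusp A}b_{j-1}$ (i.e.\ $d_0^Ab_j=-d_1^Ab_{j-1}$), force $\epsilon_{l,m}=\pm\epsilon_{l-1,m}$, and under the tensor convention in which the vertical (resp.\ horizontal) Koszul sign is governed by vertical (resp.\ horizontal) degree these two recursions are simultaneously satisfied by the constant choice $\epsilon_{l,m}=1$, so your unsigned ansatz works verbatim.
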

\subsection{Model categories}
We take as our definition of a \textit{model category} that of \cite[Definition 1.1.3]{H}, in particular it is a complete and cocomplete category with functorial factorisations, and take \cite[Definition 2.1.17]{H} as our definition of a \textit{cofibrantly generated model category}.

    In a cofibrantly generated model category the \textit{generating cofibrations} will be denoted $I$ and \textit{generating acyclic cofibrations} $J$ (we will frequently adorn these with subscripts to specify a particular model category).
    For a class $K$ of morphisms we write $K\mathdash\Inj$ (resp.\ $K\mathdash\Proj$) for the the $K$-injectives (resp.\ $K$-projectives), i.e.\ those morphisms having the right (resp.\ left) lifting property with respect to $K$ and set $K\mathdash\Cof\coloneqq (K\mathdash\Inj)\mathdash\Proj$.
    We write $K\mathdash\Cell$ for the \textit{$K$-cellular morphisms}, i.e.\ those built as transfinite compositions of pushouts of elements of $K$, see \cite[Definition 2.1.9]{H}.

    We additionally say a model category is \textit{finitely cofibrantly generated} if the domains and codomains of $I$ and $J$ are finite relative to the cofibrations, \cite[\S 7.4]{H}.

We list some standard definitions of desirable properties of model categories.
\begin{defi}\label{modelCategoryProperties}
  A model category $\mc{M}$ is said to be
  \begin{itemize}
  \item \textit{left proper} if pushouts of weak equivalences along cofibrations are weak equivalences,
  \item \textit{right proper} if pullbacks of weak equivalences along fibrations are weak equivalences,
  \item \textit{proper} if $\mc{M}$ is both left proper and right proper,
  \item \textit{cellular} if it is a cofibrantly generated model category and
    \begin{enumerate}
    \item the domains and codomains of $I$ are small,
    \item the domains of $J$ are small relative to $I$,
    \item \sloppy the cofibrations are \textit{effective monomorphisms}, i.e.\ any cofibration $i\colon X\rightarrow Y$ is the equaliser of the two inclusions
      \begin{equation*}
        \begin{tikzcd}
          Y\arrow[r,shift left]\arrow[r,shift right] & Y\coprod_X Y\,,
        \end{tikzcd}
      \end{equation*}
    \end{enumerate}
  \item \textit{pointed} if the initial and terminal objects are isomorphic (in which case we denote either by $\ast$).
  \end{itemize}
\end{defi}
Given a model category $\mc{M}$ one can form the \textit{Reedy model structure} on the categories of simplicial objects in $\mc{M}$ and cosimplicial objects of $\mc{M}$, see \cite[Theorem 5.2.5]{H}.
These lead to the definitions of (left and right) \textit{homotopy function complexes} $\map_l(-,-)$ and $\map_r(-,-)$ which are simplicial sets, see \cite[Corollary 5.4.4]{H}.
Their right derived functors are isomorphic by \cite[Theorem 5.4.9]{H} so for $A$ cofibrant and $X$ fibrant we have $\map_l(A,X)\simeq\map_r(A,X)$ and in this case we write $\map(-,-)$ for either choice.
The use of homotopy function complexes in this paper will be in dealing with Bousfield localisations.

For computing the \textit{homotopy loops functor} we make use of the following lemma whose proof can be found in \cite[Corollary 13.3.8]{Hirschhorn}.
\begin{lemm}\label{loopsViaPullback}
  Let $\mc{M}$ be a right proper pointed model category, $f$ be a fibration, and $Y\simeq \ast$.
  Then the following pullback diagram gives a model for the homotopy pullback of $X$ by two points
  \begin{equation*}
    \begin{tikzcd}
      Z\arrow[r,dashed]\arrow[d,dashed]\arrow[dr,very near start, "\lrcorner", phantom]&Y\arrow[d,"f", two heads]\\
      \ast\arrow[r]&X
    \end{tikzcd}
  \end{equation*}
  i.e.\ $Z$ is a model for the loops functor applied to $X$, $Z\simeq\Omega X$.\qed
\end{lemm}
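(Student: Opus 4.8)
The plan is to reduce the statement to two standard facts about homotopy pullbacks in a right proper model category: that an ordinary pullback along a fibration already computes the homotopy pullback, and that homotopy pullbacks are invariant under objectwise weak equivalences of the cospan defining them. Since $\mc{M}$ is pointed, the loops object $\Omega X$ is by definition the homotopy pullback of the cospan $\ast\to X\leftarrow\ast$, so it is enough to identify $Z$ with this homotopy pullback up to weak equivalence.

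First I would exploit right properness. As $\mc{M}$ is right proper and $f\colon Y\to X$ is a fibration, the strict pullback square defining $Z$ is a homotopy pullback square; equivalently, $Z$ is a model for the homotopy pullback of the cospan $\ast\to X\leftarrow Y$. This is the usual consequence of right properness that pullbacks of fibrations are homotopy pullbacks, and it requires no further fibrancy or cofibrancy hypotheses on $X$ beyond $f$ being a fibration.

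Next I would use the hypothesis $Y\simeq\ast$. Because $\ast$ is terminal, the canonical map $Y\to\ast$ is the weak equivalence witnessing $Y\simeq\ast$, and it defines a map of cospans from $(\ast\to X\leftarrow Y)$ to $(\ast\to X\leftarrow\ast)$ which is a weak equivalence in each spot. Homotopy pullbacks carry such objectwise weak equivalences to weak equivalences, so the homotopy pullback of $\ast\to X\leftarrow Y$ is weakly equivalent to that of $\ast\to X\leftarrow\ast$, which is $\Omega X$ by definition. Chaining the two comparisons yields $Z\simeq\Omega X$.

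The step I expect to be the main obstacle is the right-properness reduction: one must verify that the strict pullback along the fibration $f$ genuinely computes the homotopy pullback, which is precisely the content of the standard right-proper pullback lemma found in \cite{Hirschhorn}. Care is also needed in the homotopy-invariance step to replace the correct leg of the cospan, namely the one with source $Y$ rather than the fibration $f$; replacing $f$ instead would not be legitimate without further hypotheses. Once these points are settled, the identification $Z\simeq\Omega X$ follows directly from the definition of the homotopy loops functor.
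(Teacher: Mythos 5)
Your proposal is correct and matches the paper's intent exactly: the paper proves this lemma simply by citing \cite[Corollary 13.3.8]{Hirschhorn}, and your two steps --- the right-proper pullback lemma (strict pullbacks along fibrations compute homotopy pullbacks) followed by homotopy invariance of the homotopy pullback under the objectwise weak equivalence of cospans induced by $Y\to\ast$ --- are precisely the standard argument underlying that citation. Your cautionary remark about replacing the leg $Y$ rather than the fibration $f$ is also well taken.
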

There is an analogous statement for constructing homotopy pushouts in a left proper model category.
\begin{defi}\label{stable}
  A pointed model category $\mc{M}$ is said to be stable if the suspension and loops functors give an equivalence of categories on the homotopy category.
\end{defi}
\subsection{Bousfield localisations}
Left (resp.\ right) Bousfield localisations provide a method of constructing a new model category from an existing one by expanding the class of weak equivalences and retaining the same cofibrations (resp.\ fibrations).
Existence of a left or right Bousfield localisation is not guaranteed, however under some reasonable conditions they can be constructed.
The definitions and results in this section can mostly be found in \cite{Hirschhorn}.
\begin{defi}
  Let $\mc{M}$ be a model category and $\mc{C}$ be a subclass of its morphisms, then:
  \begin{itemize}
  \item an object $W$ of $\mc{M}$ is \textit{$\mc{C}$-local} if it is fibrant and for all $f\colon A\rightarrow B$ of $\mc{C}$ the induced maps on the homotopy function complexes, $\map(B,W)\rightarrow \map(A,W)$, is a weak equivalence of simplicial sets, and
  \item a morphism $g\colon X\rightarrow Y$ of $\mc{M}$ is a \textit{$\mc{C}$-local equivalence} if for every $\mc{C}$-local object $W$ of $\mc{M}$ the induced maps of homotopy function complexes, $\map(g,W)\colon \map(Y,W)\rightarrow \map(X,W)$, are weak equivalences of simplicial sets.
  \end{itemize}
\end{defi}
\begin{defi}
  The \textit{left Bousfield localisation} of a model category $\mc{M}$ at a class of morphisms $\mc{C}$ denoted $L_{\mc{C}}\mc{M}$ is, if it exists, a model category with underlying category that of $\mc{M}$ and
  \begin{itemize}
  \item weak equivalences the $\mc{C}$-local equivalences of $\mc{M}$, and
  \item cofibrations the cofibrations of $\mc{M}$.
  \end{itemize}
\end{defi}
\begin{theo}[{\cite[Theorem 4.1.1]{Hirschhorn}}]
  Let $\mc{M}$ be a left proper and cellular model category and $\mc{C}$ a subset of morphisms, then the left Bousfield localisation of $\mc{M}$ at $\mc{C}$ exists.\qed
\end{theo}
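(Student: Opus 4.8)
The plan is to construct the localised model structure $L_{\mc{C}}\mc{M}$ directly and verify the axioms of \cite[Definition 1.1.3]{H}, declaring the cofibrations to be those of $\mc{M}$, the weak equivalences to be the $\mc{C}$-local equivalences, and the fibrations to be forced by the right lifting property against the acyclic cofibrations. The two genuinely new pieces of data needed are a \emph{set} of generating acyclic cofibrations for the localisation and a functorial localisation functor; once these are in place the rest is formal.

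First I would enlarge $\mc{C}$ to a set $\Lambda(\mc{C})$ of ``local-acyclic horns''. Replacing each map of $\mc{C}$ by a cofibration between cofibrant objects and forming the pushout-products of a cosimplicial frame of it with the boundary inclusions $\partial\Delta[n]\hookrightarrow\Delta[n]$ yields a class of cofibrations whose right lifting property detects precisely the $\mc{C}$-local objects. The cellular hypothesis is exactly what lets this class be cut down to a set of bounded cardinality: cellularity supplies a regular cardinal relative to which the domains and codomains of $I$ are small and with respect to which cofibrations are effective monomorphisms, and this bounds the size of the cells one must test against. I would then set $J_{\mc{C}}\coloneqq J\cup\Lambda(\mc{C})$ and run the small object argument on $J_{\mc{C}}$; applied to the maps $X\rightarrow\ast$ this produces a functorial fibrant replacement $X\rightarrow L_{\mc{C}}X$ with $L_{\mc{C}}X$ being $\mc{C}$-local and the map a $\mc{C}$-local equivalence.

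The heart of the argument is the recognition theorem: a map lies in $J_{\mc{C}}\mathdash\Cof$ if and only if it is simultaneously a cofibration of $\mc{M}$ and a $\mc{C}$-local equivalence. One inclusion follows fairly formally from the construction of $\Lambda(\mc{C})$ and the definition of $\map(-,W)$. The reverse inclusion is where left properness is indispensable: it guarantees that the pushout of a $\mc{C}$-local equivalence along a cofibration is again a $\mc{C}$-local equivalence, so that the class of cofibrations that are also $\mc{C}$-local equivalences is closed under the pushouts and transfinite compositions occurring in the small object argument, and hence coincides with $J_{\mc{C}}\mathdash\Cof$.

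With the recognition theorem the remaining axioms assemble quickly. Two-out-of-three and closure under retracts for the $\mc{C}$-local equivalences are immediate from their definition via the induced maps on homotopy function complexes. The two factorisations come from the small object argument applied to $I$ and to $J_{\mc{C}}$, and the lifting axioms then follow, with $J_{\mc{C}}\mathdash\Inj$ being the localised fibrations and the localised fibrant objects being exactly the $\mc{C}$-local ones. I expect the main obstacles to be precisely the two places where the hypotheses enter: controlling the cardinality of $\Lambda(\mc{C})$, which needs cellularity, and proving the reverse inclusion of the recognition theorem, which needs left properness---without either the construction breaks down.
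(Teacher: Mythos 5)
Your outline follows the same broad strategy as Hirschhorn's proof of Theorem 4.1.1 (the paper itself gives no proof beyond this citation): form a set of horns $\Lambda(\mc{C})$ from cosimplicial resolutions of the maps in $\mc{C}$, run the small object argument to get a functorial fibrant replacement $X\rightarrow L_{\mc{C}}X$ onto the $\mc{C}$-local objects, and reduce the axioms to a recognition statement for the acyclic cofibrations. But there is a genuine gap at exactly the step you call the heart of the argument. With $J_{\mc{C}}\coloneqq J\cup\Lambda(\mc{C})$, your justification of the reverse inclusion is backwards: left properness shows that the class of cofibrations of $\mc{M}$ that are $\mc{C}$-local equivalences is closed under the pushouts, transfinite compositions and retracts occurring in the small object argument, and hence that $J_{\mc{C}}\mathdash\Cof$ is \emph{contained in} this class --- which is the easy inclusion you had already granted, not its converse. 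To get the converse you would need the retract argument: factor a cofibration-and-local-equivalence $f$ as a $J_{\mc{C}}$-cell map followed by a $J_{\mc{C}}$-injective $p$ and lift $f$ against $p$. This stalls, because a $J_{\mc{C}}$-injective is only a fibration of $\mc{M}$ (and, by two-out-of-three, a $\mc{C}$-local equivalence), and such a map need not be an acyclic fibration of $\mc{M}$; horn-injectivity detects the localised fibrations only when the codomain is $\mc{C}$-local. In general the horn set is \emph{not} a set of generating acyclic cofibrations for $L_{\mc{C}}\mc{M}$.

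This is precisely why Hirschhorn needs the Bousfield--Smith cardinality argument (his \S 4.5): one produces a regular cardinal $\gamma$, supplied by cellularity (compactness of cell complexes, effective monomorphisms, smallness of the domains and codomains of $I$), such that the set $J_{\mc{C}}$ of representatives of cofibrations that are $\mc{C}$-local equivalences with codomain of size at most $\gamma$ already detects lifting against \emph{all} cofibrations that are $\mc{C}$-local equivalences; left properness enters again there, in localising along pushouts of subcomplexes. Note also that cellularity is not what makes $\Lambda(\mc{C})$ a set --- $\mc{C}$ is a set by hypothesis, so $\Lambda(\mc{C})$ is one automatically; its real role is in this cardinality argument and in running the small object argument. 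With your $J_{\mc{C}}$ replaced by the Bousfield--Smith set, the remainder of your assembly of the axioms goes through essentially as you describe.
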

\begin{defi}
  Let $\mc{M}$ be a model category and $\mc{K}$ a subclass of objects of $\mc{M}$.
  A morphism $g\colon X\rightarrow Y$ is said to be a \textit{$\mc{K}$-colocal equivalence} or a \textit{$K$-cellular equivalence} if for all $K\in\mc{K}$ the induced morphisms of homotopy function complexes $\map(K,g)\colon\map(K,X)\rightarrow\map(K,Y)$ are weak equivalences of simplicial sets.
\end{defi}
\begin{defi}
  The \textit{right Bousfield localisation} of a model category $\mc{M}$ at a class of objects $\mc{K}$ denoted either $R_{\mc{K}}\mc{M}$ or $\mc{K}\mathdash\cellularization\mathdash\mc{M}$ is, if it exists, a model category with underlying category that of $\mc{M}$ and
  \begin{itemize}
  \item weak equivalence the $\mc{K}$-colocal equivalences of $\mc{M}$, and
  \item fibrations the fibrations of $\mc{M}$.
  \end{itemize}
\end{defi}
\begin{theo}[{\cite[Theorem 5.1.1]{Hirschhorn}}]
  Let $\mc{M}$ be a right proper and cellular model category and $\mc{K}$ a subset of objects, then the right Bousfield localisation of $\mc{M}$ at $\mc{K}$ exists.\qed
\end{theo}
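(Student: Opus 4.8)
The plan is to realise $R_{\mc K}\mc M$ as a cofibrantly generated model structure by appealing to Kan's recognition theorem \cite[Theorem 2.1.19]{H}. Write $\W$ for the class of $\mc K$-colocal equivalences; these are to be the weak equivalences, the fibrations are to be exactly the fibrations of $\mc M$, and the cofibrations will then be forced. Since $\map(K,-)$ carries weak equivalences of $\mc M$ between suitably (co)fibrant objects to weak equivalences of simplicial sets, every weak equivalence of $\mc M$ lies in $\W$, and $\W$ inherits the two-out-of-three property and closure under retracts directly from the corresponding properties of weak equivalences of simplicial sets; these verifications are formal. Because the fibrations are to be unchanged, the generating acyclic cofibrations $J$ of $\mc M$ can be reused: $J\mathdash\Inj$ is precisely the class of fibrations of $\mc M$, which is the intended class of fibrations.

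The substantive construction is a set of generating cofibrations $I'$ detecting $\W$. First I would choose, functorially, a cofibrant approximation $\wt K\rightarrow K$ for each $K\in\mc K$, and then form the augmented set $I'\coloneqq J\cup\bigl\{\wt K\otimes\left(\partial\Delta^n\hookrightarrow\Delta^n\right)\bigr\}$, where $\otimes$ denotes the tensor of an object with a simplicial set coming from a simplicial frame, so that lifting against these maps is adjoint to a statement about the homotopy function complexes $\map(\wt K,-)$. The maps in $I'$ are cofibrations of $\mc M$, and cellularity of $\mc M$ is what must be invoked to guarantee that their domains are small relative to $I'\mathdash\Cell$, so that the small object argument supplies the two functorial factorisations, into an $I'\mathdash\Cell$ map followed by an $I'\mathdash\Inj$ map, and likewise for $J$.

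The crux is the identification $I'\mathdash\Inj=\W\cap\left(J\mathdash\Inj\right)$, i.e.\ that a map $f$ has the right lifting property against $I'$ exactly when it is a fibration of $\mc M$ and a $\mc K$-colocal equivalence. The $J$-part of $I'$ forces the fibration condition. For the remaining generators, the defining adjunction of the simplicial frame translates the right lifting property against $\wt K\otimes\left(\partial\Delta^n\hookrightarrow\Delta^n\right)$ into the assertion that $\map(\wt K,f)$ is an acyclic fibration of simplicial sets for every $K\in\mc K$; since for a fibration $f$ and cofibrant $\wt K$ the map $\map(\wt K,f)$ is automatically a fibration of simplicial sets, this is equivalent to $\map(\wt K,f)$ being a weak equivalence, that is, to $f$ being a $\mc K$-colocal equivalence. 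Right properness is what makes this translation valid, ensuring that the homotopy function complexes are computed correctly along fibrations, in the spirit of \cref{loopsViaPullback}. I expect this reconciliation of a lifting condition against a small set with the a priori homotopical condition defining $\W$ to be the main obstacle, and it is precisely here that both hypotheses are spent: cellularity through smallness and the effective-monomorphism recognition of cofibrations, and right properness through the behaviour of the homotopy function complexes.

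Finally I would feed $I'$ and $J$ into the recognition theorem. The conditions that remain are routine given the above: $J\mathdash\Cell\subseteq\left(I'\mathdash\Cof\right)\cap\W$ holds because $J\subseteq I'$ and because $J\mathdash\Cell$ maps are acyclic cofibrations of $\mc M$, hence weak equivalences of $\mc M$, hence members of $\W$; and $I'\mathdash\Inj=\W\cap\left(J\mathdash\Inj\right)$ is the identification just established. Kan's theorem then yields a cofibrantly generated model structure whose weak equivalences are the $\mc K$-colocal equivalences and whose fibrations are those of $\mc M$, which is exactly the asserted right Bousfield localisation $R_{\mc K}\mc M$; one checks as a byproduct that its cofibrant objects are the $\mc K$-colocal objects.
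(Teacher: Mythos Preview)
The paper does not give its own proof of this statement: it is cited from \cite[Theorem~5.1.1]{Hirschhorn} and closed with a \qed, i.e.\ it is used as a black box. There is therefore nothing in the paper to compare your argument against.

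As a sketch of Hirschhorn's actual proof your outline is broadly on the right track---one does build generators from cosimplicial frames on cofibrant replacements of the $K\in\mc K$ and then identify the resulting injectives with the fibrations that are $\mc K$-colocal equivalences. Two points, however. First, a terminological slip: the tensor $\wt K\otimes\Delta^{\bullet}$ you need comes from a \emph{cosimplicial} frame on $\wt K$, not a simplicial one; Reedy cofibrancy of that cosimplicial object is what makes each $\wt K\otimes(\partial\Delta^n\hookrightarrow\Delta^n)$ a cofibration of $\mc M$ and makes the adjunction with the homotopy function complex behave as you claim.

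Second, and more substantively, your sentence ``cellularity of $\mc M$ is what must be invoked to guarantee that their domains are small relative to $I'\mathdash\Cell$'' is where the real work hides. The cellularity axioms speak only of the domains and codomains of the original $I$ and $J$; they say nothing directly about an arbitrary cofibrant $\wt K$ or the objects $\wt K\otimes\partial\Delta^n$. In Hirschhorn's argument this is precisely the technical core: one uses compactness of cells and the effective-monomorphism condition to obtain cardinality bounds on cofibrant objects, and from these deduces the smallness needed for the small object argument on $I'$. That step is several chapters of \cite{Hirschhorn}, not a one-liner, and your proposal does not indicate how it would go.
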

We will make use of the following detection of weak equivalences result in left proper, cofibrantly generated model categories due to Dugger.
\begin{prop}[{\cite[Proposition A.5]{Dugger}}]\label{duggersTheorem}
  Let $\mc{M}$ be a left proper, cofibrantly generated model category.
  Then there exists a set $W$ of cofibrant objects of $\mc{M}$ detecting weak equivalences, i.e.\ $X\rightarrow Y$ is a weak equivalence if and only if the induced map on homotopy function complexes $\map(A,X)\rightarrow\map(A,Y)$ is a weak equivalence of simplicial sets for all $A\in W$.
  Furthermore the set $W$ can be taken to be cofibrant replacements of the domains and codomains of the generating cofibrations.\qed
\end{prop}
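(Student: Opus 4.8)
The plan is to prove both implications, with the reverse (``detecting'') direction carrying all the weight. Write $I$ for the generating cofibrations and let $W$ consist of a chosen cofibrant replacement $\tilde C$ of each domain and codomain $C$ of a map in $I$; this is a set because $I$ is. For the forward implication, if $f\colon X\to Y$ is a weak equivalence then for every cofibrant $A$ the induced map $\map(A,f)$ is a weak equivalence of simplicial sets, by homotopy invariance of the derived mapping complex in its second variable; in particular this holds for every $A\in W$.

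For the reverse implication I would first record a base principle: a map $f$ is a weak equivalence if and only if $\map(A,f)$ is a weak equivalence for \emph{every} cofibrant object $A$. After replacing $X$ and $Y$ by fibrant objects (which changes neither the conclusion nor the derived mapping complexes), this reduces to the fact that a map between fibrant objects is a weak equivalence exactly when it is an isomorphism in $\Ho(\mc{M})$, i.e.\ when $\pi_0\map(A,f)=[A,f]$ is a bijection for all cofibrant $A$, which $\map(A,f)$ being a weak equivalence certainly records. The task is then to upgrade the hypothesis ``$\map(A,f)$ is a weak equivalence for all $A\in W$'' to ``for all cofibrant $A$''.

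This I would achieve by a cellular induction. Every cofibrant object is a retract of an $I$-cell complex, and since $\map(-,f)$ sends retracts in the first variable to retracts of simplicial sets (and a retract of a weak equivalence is one), it suffices to treat $A$ an $I$-cell complex $\colim_\beta A_\beta$. Each $A_\beta$ is cofibrant, each $A_{\beta+1}$ is a pushout of a map $C\to D$ of $I$ along an attaching map $C\to A_\beta$, and $\map(-,f)$ converts a homotopy pushout in the first variable into a homotopy pullback of simplicial sets. Left properness enters precisely here: because the domains and codomains of $I$ need not be cofibrant, the pushout defining $A_{\beta+1}$ is a \emph{homotopy} pushout only by virtue of $\mc{M}$ being left proper, left properness being equivalent to the assertion that pushouts along cofibrations are homotopy pushouts. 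The values of the derived $\map(-,f)$ on $C$ and $D$ are computed by their cofibrant replacements, namely $\map(\tilde C,f)$ and $\map(\tilde D,f)$, which are weak equivalences by assumption. Inducting through successor stages (homotopy pullbacks of weak equivalences) and limit stages (transfinite compositions of cofibrations between cofibrant objects, hence homotopy colimits, sent to homotopy limits) shows $\map(A_\beta,f)$ is a weak equivalence at every stage, hence so is $\map(A,f)$.

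The main obstacle I expect is bookkeeping the discrepancy between the \emph{strict} cellular presentation of $A$, built from the possibly non-cofibrant objects $C,D$, and the \emph{derived} mapping complexes, which must be computed on cofibrant replacements. Reconciling these is exactly what left properness buys: it guarantees that the strict cellular pushouts and transfinite compositions are homotopy colimits, so that applying the derived $\map(-,f)$ produces genuine homotopy limit diagrams of simplicial sets whose vertices are the $\map(\tilde C,f)$ already controlled by the hypothesis. Verifying carefully that $\map(-,f)$ really does send these homotopy colimits to homotopy limits, and that the simplicial homotopy limits and colimits assemble correctly across the transfinite filtration, is the only genuinely technical step.
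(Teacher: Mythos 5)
Your proposal is sound, but note first that the paper itself contains no proof of this proposition: it is quoted with a \verb|\qed| and attributed to Dugger, so the only ``paper proof'' is the citation. Measured against Dugger's original argument, your route is genuinely different. Dugger reduces to a fibration between fibrant objects and works with lifting properties: the function-complex hypothesis is turned into a homotopy right lifting property against cofibrant approximations of the generating cofibrations, and left properness is used to transfer lifting data between a generating cofibration $i\colon C\to D$ and its cofibrant approximation $\tilde C\to\tilde D$. You instead run a transfinite induction over a cellular presentation: every cofibrant $A$ is a retract of an $I$-cell complex, $\map(-,f)$ turns the attaching pushouts into homotopy pullbacks and the transfinite composition into a tower, and left properness enters --- at exactly the same conceptual point as in Dugger's proof, namely the possible non-cofibrancy of the domains and codomains of $I$ --- to guarantee that the strict cellular pushouts are homotopy pushouts, so that the derived corners are computed by the objects $\tilde C,\tilde D\in W$ controlled by the hypothesis. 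The supporting facts you invoke are all standard and correct as stated: left properness is equivalent to pushouts along cofibrations being homotopy pushouts; $\map\bigl(\operatorname{hocolim}_\beta A_\beta,X\bigr)\simeq\operatorname{holim}_\beta\map(A_\beta,X)$ for $X$ fibrant; a transfinite composition of cofibrations between cofibrant objects is a homotopy colimit; and your ``base principle'' follows from $\pi_0\map(A,X)\cong[A,X]$ together with Yoneda in $\Ho(\mc{M})$ and the fact that maps inverted by $\Ho$ are weak equivalences. What your approach buys is a proof that never mentions lifting properties and makes visible where each hypothesis is spent; what Dugger's buys is the avoidance of the transfinite homotopy (co)limit bookkeeping you rightly flag as the technical cost --- for instance, at limit ordinals one should check (say, using a Reedy fibrant simplicial resolution $X^\bullet$, so that $M(A_\lambda,X^\bullet)=\lim_{\beta<\lambda}M(A_\beta,X^\bullet)$ is a limit of a Reedy fibrant tower of fibrations) that strict limits compute homotopy limits, which is where the cofibrancy of the stages $A_\beta$ and the fibrancy of $X$ are genuinely needed.
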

\subsection{Greenlees and Shipley's Cellularization Principle}
The Cellularization Principle of Greenlees and Shipley is a result giving a Quillen equivalence between two localised model categories given an adjunction between the (unlocalised) model categories and a set of localising objects (along with some assumptions on the model categories and set of objects).
This result can be found in \cite{GS}.
\begin{defi}
  Let $\mc{M}$ be a model category. We say an object $K$ of $\mc{M}$ is \textit{homotopically small} if for any set of objects $\{Y_\alpha\}$ we have, in the homotopy category of $\mc{M}$, the natural map $\bigoplus_\alpha [K,Y_\alpha]\rightarrow [K,\bigwedge_\alpha Y_\alpha]$ is an isomorphism.
\end{defi}
\begin{theo}[The Cellularization Principle]\label{cellularizationPrinciple}
  Let $\mc{M}$ and $\mc{N}$ be right proper, stable, cellular model categories with a Quillen adjunction $\inadj{F}{\mc{M}}{\mc{N}}{U}$.
  
  Write $Q$ and $R$ for cofibrant and fibrant replacement functors.
  \begin{enumerate}
  \item Let $\mc{K}=\{K_\alpha\}$ be a set of objects in $\mc{M}$ and $FQ\mc{K}\coloneqq\{FQK_\alpha\}$ the corresponding set in $\mc{N}$.
    Then $F$ and $U$ induce a Quillen adjunction
    \begin{equation*}
      \adj{F}{\mc{K}\mathdash\cellularization\mathdash\mc{M}}{FQ\mc{K}
        \mathdash\cellularization\mathdash\mc{N}}{U}
    \end{equation*}
    between the $\mc{K}$-cellularization of $\mc{M}$ and the $FQ\mc{K}$-cellularization of $\mc{N}$.
  \item Furthermore if $\mc{K}$ is a stable set of homotopically small objects such that for each $K_\alpha\in\mc{K}$ the object $FQK_\alpha$ is homotopically small in $\mc{N}$ and the derived unit $QK_\alpha\rightarrow URFQK_\alpha$ is a weak equivalence for each $K_\alpha\in\mc{K}$ then $F\dashv U$ induces a Quillen equivalence
    \begin{equation*}
     \mc{K}\mathdash\cellularization\mathdash\mc{M} \simeq_QFQ\mc{K}\mathdash\cellularization\mathdash\mc{N}
    \end{equation*}
  \end{enumerate}
\end{theo}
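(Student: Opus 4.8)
The plan is to construct both cellularizations, descend the adjunction to them, and then run a triangulated ``cells generate'' argument to upgrade the Quillen adjunction to a Quillen equivalence. First I would invoke the right Bousfield localization theorem quoted above (\cite[Theorem 5.1.1]{Hirschhorn}): since $\mc{M}$ and $\mc{N}$ are right proper and cellular, the cellularizations $\mc{K}\mathdash\cellularization\mathdash\mc{M}$ and $FQ\mc{K}\mathdash\cellularization\mathdash\mc{N}$ exist, with fibrations those of $\mc{M}$ (resp.\ $\mc{N}$) and weak equivalences the $\mc{K}$-colocal (resp.\ $FQ\mc{K}$-colocal) equivalences.

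For part (1), since the cellularizations retain the original fibrations, $U$ still preserves fibrations, so it remains only to check that $U$ preserves trivial fibrations. A trivial fibration of $FQ\mc{K}\mathdash\cellularization\mathdash\mc{N}$ is a fibration $p$ of $\mc{N}$ that is an $FQ\mc{K}$-colocal equivalence, and $Up$ is again a fibration of $\mc{M}$. To see that $Up$ is a $\mc{K}$-colocal equivalence I would use the derived adjunction on homotopy function complexes, $\map_{\mc{M}}(K_\alpha,U-)\simeq\map_{\mc{N}}(FQK_\alpha,-)$, which holds after fibrant replacement because $QK_\alpha$ is a cofibrant replacement of $K_\alpha$ and $U$ is right Quillen. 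Since these complexes are homotopy invariant, applying this to the source and target of $p$ converts the hypothesis that $p$ is $FQ\mc{K}$-colocal directly into the conclusion that $Up$ is $\mc{K}$-colocal. Hence $F\dashv U$ descends to a Quillen adjunction between the two cellularizations.

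For part (2) I would pass to the derived adjunction $\mathbb{L}F\dashv\mathbb{R}U$ of triangulated functors between the homotopy categories. These categories are triangulated because $\mc{M}$ and $\mc{N}$ are stable (\cref{stable}) and the cellularization of a stable model category is again stable. The key structural fact is that each homotopy category is the localizing subcategory of $\Ho(\mc{M})$, resp.\ $\Ho(\mc{N})$, generated by $\{QK_\alpha\}$, resp.\ $\{FQK_\alpha\}$: stability of the set $\mc{K}$ (closure under $\Sigma$ and $\Omega$) ensures these generating families are themselves stable, while homotopical smallness of $K_\alpha$ and of $FQK_\alpha$ ensures that $[K_\alpha,-]$ and $[FQK_\alpha,-]$ commute with coproducts, so the derived functors preserve the coproducts used to assemble cellular objects. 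The remaining hypothesis, that the derived unit $QK_\alpha\to URFQK_\alpha$ is a weak equivalence, says exactly that the unit of $\mathbb{L}F\dashv\mathbb{R}U$ is an isomorphism on each generator.

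To finish I would run the standard localizing-subcategory induction twice. The full subcategory of $\Ho(\mc{K}\mathdash\cellularization\mathdash\mc{M})$ on which the derived unit is an isomorphism contains all the generators $QK_\alpha$, is closed under exact triangles (by exactness of $\mathbb{L}F$ and $\mathbb{R}U$) and under arbitrary coproducts (by the smallness hypotheses); being localizing and containing the generators it must be everything, so $\mathbb{L}F$ is fully faithful. Dually, the essential image of $\mathbb{L}F$ is a localizing subcategory of the target containing every $FQK_\alpha$, hence is the whole target, so $\mathbb{L}F$ is essentially surjective. Thus $\mathbb{L}F\dashv\mathbb{R}U$ is an adjoint equivalence and $F\dashv U$ is a Quillen equivalence. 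I expect the main obstacle to be the coproduct bookkeeping: verifying that the cellular homotopy category is precisely the localizing subcategory generated by the cells and that both derived functors commute with the relevant coproducts, which is where right properness (for computing the homotopy (co)limits) and homotopical smallness are genuinely needed.
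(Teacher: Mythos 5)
The paper offers no proof of its own here---the statement is imported with the one-line citation \cite[Theorem 2.7]{GS}---and your sketch is essentially a correct reconstruction of the Greenlees--Shipley argument itself: existence of both cellularizations via Hirschhorn's right localization theorem, descent of the Quillen pair by checking $U$ on fibrations and trivial fibrations through the derived adjunction $\map_{\mc{M}}(QK_\alpha, UX)\simeq\map_{\mc{N}}(FQK_\alpha, X)$, and the localizing-subcategory induction from the cells, with stability of $\mc{K}$ and homotopical smallness supplying closure under triangles and coproducts. The one spot needing care is in your part (1): the function-complex equivalence you invoke is valid for $X$ fibrant (for non-fibrant $X$ the comparison of $R(UX)$ with $U(RX)$ fails, since $U$ need not preserve the weak equivalence $X\rightarrow RX$), so you should first reduce, via the dual of Dugger's lemma, to checking that $U$ preserves trivial fibrations between fibrant objects---after which your outline matches the cited proof.
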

\begin{proof}
  \cite[Theorem 2.7]{GS}.
\end{proof}
\subsection{$S$-model structures on $\fCh$ and $\bCh$}
In \cite{CELW} the authors introduced model structures, for each $r\geq 0$, on filtered chain complexes and bicomplexes whose weak equivalences were the $r$-quasi-isomorphisms of the associated spectral sequence.
These model structures came in two flavours: for filtered chain complexes the fibrations of the first were those morphisms that were bidegreewise surjective on the $r$-cycles, fibrations in the second were bidegreewise surjective on all $s$-cycles for $s\leq r$.
The case for bicomplexes replaces cycles with witness cycles and additionally imposed $0$-witness cycle surjectivity in the first model structure.

In \cite{BThesis} the author of this paper then found more model structures between the two types just outlined.
We recall those here.

We introduce the convention that in the context of filtered chain complexes $S$ is a finite subset of $\mathbb{N}\cup\{0\}$ with $r=\max S$.
In the context of bicomplexes $S$ additionally has $0$ as an element.

\begin{defi}
  For the category of filtered chain complexes and $S\subseteq \left\{0,1,\ldots,r\right\}$ containing $r$ we let:
  \begin{align*}
    I_r&\coloneqq \left\{\phi_{r+1}\colon \Z_{r+1}(p,n)\rightarrow \B_{r+1}(p,n)\right\}_{p,n\in\Zbb}\\
    J_r&\coloneqq\left\{0\colon 0\rightarrow \Z_r(p,n)\right\}_{p,n\in\Zbb}
  \end{align*}
  and more generally define:
  \begin{align*}
    I_S&\coloneqq I_r\cup\bigcup_{s\in S}J_s\,,&
                                              J_S&\coloneqq \bigcup_{s\in S}J_s\,.
  \end{align*}
\end{defi}

\begin{theo}\label{filteredChainsSModelCategories}
  For every $r\geq 0$ and subset $S\subseteq\{0,1,2,\ldots,r\}$ including $r$, the category $\fCh$ admits a right proper, finitely cofibrantly generated model structure, which we denote $\fChS$, where
  \begin{enumerate}
  \item weak equivalences are the $r$-quasi-isomorphisms,
  \item fibrations are the morphisms that are bidegreewise surjective on $s$-cycles for all $s\in S$, and
  \item $I_S$ and $J_S$ are the generating cofibrations and generating acyclic cofibrations respectively.
  \end{enumerate}
\end{theo}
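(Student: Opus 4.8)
The plan is to apply Kan's recognition theorem for cofibrantly generated model categories, \cite[Theorem 2.1.19]{H}, to the class $\Er$ of $r$-quasi-isomorphisms together with the sets $I_S$ and $J_S$. First I would record the formal inputs: $\fCh$ is complete and cocomplete, since limits and colimits are computed filtration-wise and degreewise over $R$; the class $\Er$ satisfies the two-out-of-three property and is closed under retracts because it is defined as the isomorphisms detected by the functor $E_{r+1}^{\bullet,\bullet}(-)$; and both $I_S$ and $J_S$ permit the small object argument since every object of $\fCh$ is small relative to the whole category by \cref{smallFilteredChains}. The finiteness clause of the theorem is then immediate once the generators are seen to have finite domains and codomains, which holds by \cref{FiniteFilteredChains}, as the objects $\Z_s(p,n)$ and $\B_s(p,n)$ are bounded and bidegreewise finitely generated.

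Next I would identify the two classes of injectives cut out by $J_S$ and $I_S$. Using the representability isomorphism $\Hom_{\fCh}(\Z_s(p,n),A)\cong Z_s^{p,p+n}(A)$, a morphism $f\colon A\to B$ has the right lifting property against $0\to\Z_s(p,n)$ precisely when $Z_s^{p,p+n}(f)$ is surjective; ranging over all $(p,n)$ and all $s\in S$ shows that $J_S\mathdash\Inj$ is exactly the class described in clause (2), so these are to be the fibrations. The corresponding analysis for $I_S=I_r\cup\bigcup_{s\in S}\{0\to\Z_s(p,n)\}$ requires translating the lifting property against the maps $\phi_{r+1}\colon\Z_{r+1}(p,n)\to\B_{r+1}(p,n)$ into a statement about the $(r+1)$-page, and this is where the main work lies: I would show that a morphism lies in $I_S\mathdash\Inj$ if and only if it is simultaneously a fibration and an $r$-quasi-isomorphism, i.e.\ $I_S\mathdash\Inj=\Er\cap(J_S\mathdash\Inj)$. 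This identification of the acyclic fibrations is the crux, and I expect the argument that having the right lifting property against the single family $\{\phi_{r+1}\}$ together with $s$-cycle surjectivity for $s\in S$ forces an isomorphism on $E_{r+1}$, and conversely, to be the principal obstacle, since only the cycles indexed by $S$, rather than all $s\le r$, are directly available.

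With the two injective classes understood, I would verify the remaining hypotheses of the recognition theorem. For the inclusion $J_S\mathdash\Cell\subseteq\Er\cap(I_S\mathdash\Cof)$, the cofibration half is automatic from $J_S\subseteq I_S$, while the weak-equivalence half reduces to checking that each generator $0\to\Z_s(p,n)$ with $s\in S$ is an $r$-quasi-isomorphism; since a pushout of it adjoins a summand $\Z_s(p,n)$ and $E_{r+1}$ is additive, this in turn reduces to the assertion that $\Z_s(p,n)$ is $r$-acyclic for every $s\le r$, together with the compatibility of $E_{r+1}$ with the transfinite colimits occurring in $J_S\mathdash\Cell$ (which follows from the smallness and finiteness already noted). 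The inclusion $I_S\mathdash\Inj\subseteq\Er\cap(J_S\mathdash\Inj)$ is one direction of the acyclic-fibration identification, and I would close the recognition-theorem hypotheses by establishing the reverse inclusion $\Er\cap(J_S\mathdash\Inj)\subseteq I_S\mathdash\Inj$, again via the $E_{r+1}$-page translation. These being in place, \cite[Theorem 2.1.19]{H} produces the cofibrantly generated model structure with the stated generators, weak equivalences and fibrations. Finally, right properness comes for free: the terminal object is $0$ and every map $A\to 0$ is $s$-cycle surjective for trivial reasons, so all objects are fibrant, and a model category in which all objects are fibrant is automatically right proper.
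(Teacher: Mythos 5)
Your overall strategy --- Kan's recognition theorem applied to the triple $(\Er, I_S, J_S)$ --- is the right one, and it is in fact the strategy underlying the results the paper invokes: the paper's own proof is a two-line citation of \cite[Theorems 3.14 and 3.16]{CELW} for the extreme cases $S=\{r\}$ and $S=\{0,1,\ldots,r\}$ and of \cite[Theorem 3.0.3]{BMonoidal} for general $S$, plus the observation that the finitely cofibrantly generated property follows from \cref{FiniteFilteredChains}. Your formal bookkeeping is all correct: smallness via \cref{smallFilteredChains}; the identification of $J_S\mathdash\Inj$ with the class in clause (2) via the representability $\Hom_{\fCh}(\Z_s(p,n),A)\cong Z_s^{p,p+n}(A)$; the $r$-acyclicity of $\Z_s(p,n)$ for $s\leq r$ (both generators cancel on the $s$-page, so $E_{s+1}=0$); the reduction of $J_S\mathdash\Cell\subseteq\Er\cap(I_S\mathdash\Cof)$ to this acyclicity together with compatibility of $E_{r+1}$ with the relevant transfinite colimits; and right properness from fibrancy of every object.

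However, as a proof the attempt has a genuine gap at exactly the place you flag: the identification $I_S\mathdash\Inj=\Er\cap(J_S\mathdash\Inj)$ is announced (``I would show'', ``I expect the argument'') but never argued, and this identification is the entire non-formal content of the theorem. Both directions require real work with the cycle--boundary calculus of \cref{associatedSS}. For $I_S\mathdash\Inj\subseteq\Er$ one must extract, from the lifting property against $\phi_{r+1}\colon\Z_{r+1}(p,n)\rightarrow\B_{r+1}(p,n)$ alone, both surjectivity and injectivity of $E_{r+1}(f)$ in every bidegree; for the converse inclusion $\Er\cap(J_S\mathdash\Inj)\subseteq I_S\mathdash\Inj$ one must construct an explicit diagonal filler for a map that is only $Z_s$-surjective for $s\in S$, and the difficulty you correctly name --- that cycles of intermediate levels $s\notin S$ are not directly available --- is precisely what makes the passage from the two extreme cases of \cite{CELW} to arbitrary $S$ nontrivial; it is the substance of \cite[Theorem 3.0.3]{BMonoidal} and cannot be waved through. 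Without this step the recognition theorem has nothing to bite on, so your proposal should be regarded as a correct reduction of the theorem to the acyclic-fibration characterisation, not a proof of it.
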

\begin{proof}
  For $S=\left\{r\right\}$ or $\left\{0,1,\ldots,r\right\}$ the model structures were shown in \cite[Theorems 3.14 and 3.16]{CELW} and for other $S$ in \cite[Theorem 3.0.3]{BMonoidal}.
  The finitely cofibrantly generated property follows from \cref{FiniteFilteredChains}.
\end{proof}
\begin{defi}
  For the category of bicomplexes and $S\subseteq\left\{0,1,\ldots,r\right\}$ containing both $0$ and $r$ we let:
  \begin{align*}
    I_r&\coloneqq \left\{\phi_{r+1}\colon \ZW_{r+1}(p,p+n)\rightarrow \BW_{r+1}(p,p+n-1)\right\}_{p,n\in\Zbb}\\
    J_r&\coloneqq\left\{0\colon 0\rightarrow \ZW_r(p,p+n)\right\}_{p,n\in\Zbb}\cup
         \left\{0\colon 0\rightarrow \ZW_0(p,p+n)\right\}_{p,n\in\Zbb}
  \end{align*}
  and more generally define:
  \begin{align*}
    I_S&\coloneqq I_r\cup\bigcup_{s\in S}J_s\,,&
                                                 J_S&\coloneqq \bigcup_{s\in S}J_s\,.
  \end{align*}
\end{defi}
\begin{theo}\label{bicomplexesSModelCategories}
  For every $r\geq 0$ and subset $S\subseteq\{0,1,2,\ldots,r\}$ including both $0$ and $r$, the category $\bCh$ admits a right proper, finitely cofibrantly generated model structure, which we denote $\bChS$, where
  \begin{enumerate}
  \item weak equivalences are the $r$-quasi-isomorphisms,
  \item fibrations are the morphisms that are bidegreewise surjective on $s$-witness cycles for all $s\in S$, and
  \item $I_S$ and $J_S$ are the generating cofibrations and generating acyclic cofibrations respectively.
  \end{enumerate} 
\end{theo}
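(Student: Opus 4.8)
The plan is to obtain the model structure from the recognition theorem for cofibrantly generated model categories, in exact parallel with the filtered case \cref{filteredChainsSModelCategories}, substituting witness cycles for cycles throughout. The two extreme cases $S=\{0,r\}$ and $S=\{0,1,\ldots,r\}$ are already available from \cite[\S4]{CELW}, so the substantive work is to interpolate for a general $S$ containing both $0$ and $r$. First I would record the formal inputs: $\bCh$ is bicomplete, and the class $\Er$ of $r$-quasi-isomorphisms satisfies two-out-of-three and is closed under retracts, since it is defined by demanding an isomorphism on the functor $E_{r+1}^{\ast,\ast}$. By \cref{smallBicomplexes} every object of $\bCh$ is small relative to the whole category, so both $I_S$ and $J_S$ permit the small object argument automatically; the same lemma shows the domains and codomains of $I_S$ and $J_S$ (the finite staircase bicomplexes $\ZW_s$ and the direct sums $\BW_s$) are bounded and bidegreewise finitely generated, hence finite, which will yield the finitely cofibrantly generated conclusion once the model structure is in place.

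Next I would identify the two injective classes using representability. Since $0\to\ZW_s(p,p+n)$ has the left lifting property against a morphism $f$ exactly when the map $ZW_s^{p,p+n}(f)$ is surjective, and since $0\in S$, a morphism lies in $J_S\text{-inj}$ if and only if it is bidegreewise surjective on $s$-witness cycles for every $s\in S$; this is precisely the proposed class of fibrations. I would then aim to show $I_S\text{-inj}=\Er\cap(J_S\text{-inj})$, i.e.\ that the $I_S$-injectives are exactly the acyclic fibrations, by analysing lifting against $\phi_{r+1}\colon\ZW_{r+1}(p,p+n)\to\BW_{r+1}(p,p+n-1)$ together with the maps $J_s$.

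The acyclicity half of the recognition hypotheses, namely $J_S\text{-cell}\subseteq\Er$, reduces to the statement that $\ZW_s(p,p+n)$ is $r$-acyclic for $s\le r$ (equivalently $0\to\ZW_s(p,p+n)\in\Er$): this holds because the associated spectral sequence of $\ZW_s(p,p+n)$ degenerates by its $(s+1)$-page, so a fortiori its $(r+1)$-page vanishes, as established in \cite{CELW}. Stability of $\Er$ under the transfinite compositions of pushouts appearing in $J_S$-cell then follows as there. Right properness is immediate and requires no localisation argument: the terminal object is $0$ and $ZW_s^{p,p+n}(0)=0$, so every map $A\to 0$ is trivially $s$-witness-cycle surjective, whence all objects are fibrant and pullbacks of weak equivalences along fibrations are weak equivalences.

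The main obstacle is the remaining inclusion $\Er\cap(J_S\text{-inj})\subseteq I_S\text{-inj}$, which is the heart of the lifting and factorisation requirements: a morphism $f$ that is $s$-witness-cycle surjective for all $s\in S$ and an $r$-quasi-isomorphism must lift against the generating cofibration $\phi_{r+1}$. Establishing this requires a cycle-by-cycle diagram chase through the staircases $\ZW_{r+1}$ and $\BW_{r+1}$; I would produce the required lifts using the $r$-cone construction together with the surjectivity provided by \cref{loopFibrationSurjectiveBicomplexes} and the $r$-acyclicity of \cref{rConeIsAcyclicBicomplexes}, exactly mirroring the treatment of the filtered case. Once this identification is secured, the recognition theorem delivers the model structure, the generating sets $I_S$ and $J_S$ are as claimed, and the finiteness of their domains and codomains gives the finitely cofibrantly generated property.
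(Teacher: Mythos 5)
Your proposal is correct and takes essentially the same route as the paper: the paper's own proof simply cites \cite[Theorems 4.37 and 4.39]{CELW} for the extreme cases $S=\{0,r\}$ and $S=\{0,1,\ldots,r\}$, notes that general $S$ follows by the same recognition-theorem argument as \cite[Theorem 3.0.3]{BMonoidal}, and obtains the finitely cofibrantly generated property from \cref{smallBicomplexes}, exactly as you do. The details you sketch---identifying $J_S\mathdash\Inj$ with the witness-cycle-surjective maps via representability (using $0\in S$), the $r$-acyclicity of the $\ZW_s$ for the acyclicity half, fibrancy of all objects for right properness, and the lifting characterisation of acyclic fibrations against $\phi_{r+1}$ via the $r$-cone---are precisely the content of the arguments the paper cites.
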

\begin{proof}
  For $S=\left\{0,r\right\}$ or $\left\{0,1,\ldots,r\right\}$ the model structures were shown in \cite[Theorems 4.37 and 4.39]{CELW} and for other $S$ the proof follows in the same way as \cite[Theorem 3.0.3]{BMonoidal}.
  The finitely cofibrantly generated property follows from \cref{smallBicomplexes}.
\end{proof}

For a fixed $r$ the model categories $\fChS$ as $S$ varies are distinct model categories, in that they have different fibrations and hence different cofibrations.
The following morphisms can be assembled to give morphisms that are $Z_k$-bidegreewise surjective for all $k\neq s$.
\begin{defi}
  The morphisms $\alpha_s^{p,p+n}$ and $\beta_s^{p,p+n}$ of filtered chain complexes are given by:
  \begin{itemize}
  \item $\alpha_s^{p,p+n}\colon\Z_{s+1}(p+1,p+1+n)\rightarrow \Z_s(p,p+n)$ whose underlying maps of $R$-modules are the identity wherever possible, and
  \item $\beta_s^{p,p+n}\colon\Z_{s-1}(p,p+n)\oplus R_{(p-s)}^{n+1}\rightarrow\Z_s(p,p+n)$ whose underlying maps of $R$-modules are the fold map or identity wherever possible.
  \end{itemize}
\end{defi}
\begin{lemm}We have that:
  \begin{itemize}
  \item the morphisms $\alpha_s^{\ast,\ast}$ are $Z_k$-bidegreewise surjective for all $k\geq s+1$ and not $Z_k$-bidegree\-wise surjective otherwise, and
  \item the morphisms $\beta_s^{\ast,\ast}$ are $Z_k$-bidegreewise surjective for all $k\leq s-1$ and not $Z_k$-bidegree\-wise surjective otherwise.\qed
  \end{itemize}
\end{lemm}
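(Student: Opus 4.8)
The plan is to use the representability of the cycle functors recorded in \cref{associatedSS} to turn each surjectivity question into a concrete lifting problem between the small representing objects. Since $Z_k^{q,q+m}(-)\cong\Hom_{\fCh}(\Z_k(q,m),-)$, applying $Z_k^{q,q+m}$ to either $\alpha_s^{\ast,\ast}$ or $\beta_s^{\ast,\ast}$ is nothing but post-composition. Thus $\alpha_s^{p,p+n}$ is $Z_k$-bidegreewise surjective if and only if, for every $q,m\in\Zbb$, each morphism $\Z_k(q,m)\rightarrow\Z_s(p,p+n)$ factors through $\alpha_s^{p,p+n}$, and likewise $\beta_s^{p,p+n}$ is $Z_k$-bidegreewise surjective exactly when every morphism $\Z_k(q,m)\rightarrow\Z_s(p,p+n)$ lifts along $\beta_s^{p,p+n}$. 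I would treat both as lifting problems against the structure maps $\alpha_s$ and $\beta_s$.

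First I would record the Hom-modules between representing objects. A morphism out of $\Z_k(q,m)$ is determined by the image of its degree-$m$ generator, and because source and target each consist of two copies of $R$ joined by an identity differential, such a morphism can be nonzero only when the degrees match and the relevant filtration inequalities hold. Carrying this out, each $\Hom_{\fCh}$ is a free $R$-module whose rank is governed by inequalities of the shape $q\geq p$ and $q-k\geq p-a$, the second of these being precisely the condition that the image of the \emph{lower} generator lands in the correct filtration stage.

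Next I would feed $\alpha_s$ and $\beta_s$ through this description. The map $\alpha_s$ is the identity on underlying complexes, altering only the filtration of the top generator from $p+1$ to $p$; consequently a map $\Z_k(q,m)\rightarrow\Z_s(p,p+n)$ lifts automatically except possibly when it is supported in degree $m=n$ with $q=p$. Here the key observation is that for $k\geq s+1$ the cycle constraint $q-k\geq p-s$ forces $q\geq p+1$, leaving exactly the filtration slack needed to lift, whereas for $k\leq s$ the choice $q=p$ produces a nonzero map with no lift. For $\beta_s$, which restricts to an isomorphism in degree $n$ and to the fold map on the degree-$(n+1)$ part, the extra cycle summand $R_{(p-s)}^{n+1}$ makes the degree-$(n+1)$ component of every lifting problem solvable; the degree-$n$ component compares $q-k\geq p-s$ against $q-k\geq p-s+1$, which can be solved for all $q$ precisely when $k\leq s-1$, the value $q=p-s+k$ supplying the obstruction when $k\geq s$.

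Finally I would assemble these computations: ranging over all $q,m$, the maps $Z_k^{q,q+m}(\alpha_s)$ are surjective exactly for $k\geq s+1$ and $Z_k^{q,q+m}(\beta_s)$ exactly for $k\leq s-1$, with the explicit non-liftable generators above witnessing failure outside these ranges. I expect the main obstacle to be purely bookkeeping: keeping the two off-by-one filtration shifts straight and deciding, in each degree and at each boundary value of $k$, whether the governing inequality is strict enough to force the extra filtration room. Once the threshold behaviour of $q-k\geq p-s$ is pinned down, both statements follow.
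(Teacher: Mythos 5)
Your proposal is correct and matches what the paper intends: the lemma is stated without proof (deferred to the author's thesis), and the expected argument is exactly your direct computation, whether phrased via $Z_k^{q,q+m}(-)\cong\Hom_{\fCh}(\Z_k(q,m),-)$ as a lifting problem or by evaluating the cycle functors on both sides. Your threshold analysis is accurate in every case, including the explicit failure witnesses --- for $\alpha_s$ the bidegree with $q=p$, $m=n$ when $k\leq s$ (where the target cycle condition $p-k\geq p-s$ holds but the source needs $q\geq p+1$), and for $\beta_s$ the bidegree with $q=p-s+k\geq p$, $m=n$ when $k\geq s$ (where the source needs $q-k\geq p-s+1$) --- so the ``not surjective otherwise'' halves are fully established as well.
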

One can show the following morphism:
\begin{equation*}
  \gamma_s^{p,p+n}\coloneqq \nabla\circ\left(\alpha_s^{p,p+n}\oplus\beta_s^{p,p+n}\right)
\end{equation*}
is $Z_k$-bidegreewise surjective for all $k\neq s$.
This shows the model structures $\fChS$ all have different classes of fibrations.
One can construct similar morphisms for bicomplexes demonstrating an analogous result.
This is done in \cite[\S 3.3]{BThesis}.

For a set $S$ write $S\pm l\coloneqq \left\{s\pm l\,|\, s\in S\right\}$.
\begin{prop}\label{SDecQEquiv}
  There are Quillen equivalences $\inadj{S^l}{\fChS}{\left(\fCh_{S+l}\right)}{\Dec^l}$.
\end{prop}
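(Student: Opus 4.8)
The plan is to verify the Quillen equivalence by working through the right adjoint $\Dec^l$, reducing everything to a single reindexing identity for the cycle functors together with the fact recorded in \cref{shiftDecalageAdjunction} that the unit of $\Shift^l\dashv\Dec^l$ is the identity natural transformation. Throughout I write $r=\max S$, so that the weak equivalences of $\fChS$ are the $r$-quasi-isomorphisms and those of $\left(\fCh\right)_{S+l}$ are the $(r+l)$-quasi-isomorphisms.

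The computational heart is the claim that
\[
  Z_s^{p,p+n}(\Dec^l A)=Z_{s+l}^{\,p-ln,\;p-ln+n}(A)
\]
for all $s\geq0$ and all $A\in\fCh$. To prove this I would substitute $F_p(\Dec^l A)^n=Z_l^{p-ln,p-ln+n}(A)$ into the definition $Z_s^{p,p+n}(\Dec^l A)=F_p(\Dec^l A)^n\cap d^{-1}F_{p-s}(\Dec^l A)^{n+1}$ and unwind; the only point requiring care is that, since $s\geq0$ and the filtration is increasing, the condition $da\in F_{p-s-ln-l}A^{n+1}$ coming from $F_{p-s}(\Dec^l A)^{n+1}$ is stronger than the condition $da\in F_{p-ln-l}A^{n+1}$ coming from $F_p(\Dec^l A)^n$, so the two collapse to the single $(s+l)$-cycle condition. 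Because $\Dec^l$ is the identity on underlying chain complexes, the differential and hence the boundary functors $B_s$ reindex in exactly the same fashion, yielding Deligne's d\'ecalage page shift $E_s^{p,p+n}(\Dec^l A)\cong E_{s+l}^{\,p-ln,\;p-ln+n}(A)$.

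Granting this, the Quillen adjunction is immediate. Fibrations of $\left(\fCh\right)_{S+l}$ are the morphisms that are bidegreewise surjective on $(s+l)$-cycles for every $s\in S$, and since $Z_s^{p,p+n}(\Dec^l f)$ is identified with $Z_{s+l}^{p-ln,p-ln+n}(f)$ the functor $\Dec^l$ carries them to morphisms bidegreewise surjective on $s$-cycles for every $s\in S$, i.e.\ to fibrations of $\fChS$. Applying the page shift with $s=r+1$ shows that $\Dec^l f$ is an $r$-quasi-isomorphism precisely when $f$ is an $(r+l)$-quasi-isomorphism, so $\Dec^l$ both preserves and reflects weak equivalences; combined with preservation of fibrations this gives preservation of acyclic fibrations, and hence $\Shift^l\dashv\Dec^l$ is a Quillen adjunction.

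For the equivalence I would invoke the criterion of \cite[Corollary 1.3.16]{H}: it suffices that $\Dec^l$ reflects weak equivalences between fibrant objects and that the derived unit is a weak equivalence at every cofibrant object. The first holds because $\Dec^l$ reflects weak equivalences outright, as just noted. For the second, recall that every object of $\fChS$ and $\left(\fCh\right)_{S+l}$ is fibrant, so the fibrant replacement of $\Shift^l A$ is trivial and the derived unit coincides with the ordinary unit $A\to\Dec^l\Shift^l A$, which is the identity by \cref{shiftDecalageAdjunction}; it is therefore a weak equivalence. The main obstacle is the cycle reindexing identity above---once it is established, preservation of fibrations and of weak equivalences are formal, and the identity unit makes the equivalence condition essentially automatic.
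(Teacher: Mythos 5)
Your proof is correct and is, in substance, the argument the paper outsources: the paper's proof of \cref{SDecQEquiv} simply cites \cite[Theorem 3.22]{CELW} for the case $S=\{r\}$ and notes the generalisation to arbitrary $S$ is identical, and that underlying argument rests on exactly your ingredients --- the reindexing $Z_s^{p,p+n}(\Dec^l A)=Z_{s+l}^{p-ln,p-ln+n}(A)$, the resulting preservation and reflection of fibrations and weak equivalences by $\Dec^l$, the fact that all objects are fibrant, and the identity unit of \cref{shiftDecalageAdjunction}. One small caveat: your blanket claim that the boundary functors ``reindex in exactly the same fashion'' (hence $E_s^{p,p+n}(\Dec^l A)\cong E_{s+l}^{p-ln,p-ln+n}(A)$) is only valid for $s\geq 1$, since $B_0(\Dec^l A)$ involves $Z_l(A)$ rather than the $(dZ_{l-1}+Z_{l-1})$-type boundaries, so at $s=0$ the comparison is merely a quasi-isomorphism (Deligne's actual statement) --- but this is harmless here because you only invoke the page shift at $s=r+1\geq 1$.
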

\begin{proof}
  The $S=\left\{r\right\}$ case was shown in \cite[Theorem 3.22]{CELW} and the generalisation to any $S$ has the same proof.
\end{proof}
\begin{prop}\label{ididQEquiv}
  let $T\subseteq S$ with $\max T=\max S = r$.
  There are Quillen equivalences $\inadj{\id}{\left(\fCh\right)_T}{\fChS}{\id}$,
  and if additionally $0\in T\subseteq S$ there are Quillen equivalences $\inadj{\id}{\left(\bCh\right)_T}{\bChS}{\id}$.
\end{prop}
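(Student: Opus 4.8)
The plan is to verify that the stated identity functors form a Quillen adjunction and then to observe that it is automatically a Quillen equivalence, since the two model structures being compared share the same underlying category and the same class of weak equivalences. First I would record that, by \cref{filteredChainsSModelCategories}, both $\fChT$ and $\fChS$ are model structures on the single category $\fCh$, and that their weak equivalences coincide: as $\max T=\max S=r$, in each case the weak equivalences are exactly the $r$-quasi-isomorphisms $\Er$. Next I would compare fibrations. A fibration of $\fChS$ is bidegreewise surjective on $s$-cycles for every $s\in S$; since $T\subseteq S$, such a morphism is in particular bidegreewise surjective on $s$-cycles for every $s\in T$, and hence is a fibration of $\fChT$. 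Thus the right-hand functor $\id\colon\fChS\to\fChT$ preserves fibrations, and because the two structures have the same weak equivalences it also preserves acyclic fibrations. Therefore $\id\dashv\id$ is a Quillen adjunction with left adjoint $\id\colon\fChT\to\fChS$, as in the statement.

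To upgrade this to a Quillen equivalence I would appeal directly to the definition of a Quillen equivalence from \cite{H}. Because both adjoints are the identity functor and the adjunction bijection on hom-sets is the identity, for any cofibrant $X$ of $\fChT$ and fibrant $Y$ of $\fChS$ a morphism $FX\to Y$ and its adjoint $X\to UY$ are literally the same morphism of $\fCh$. This morphism is a weak equivalence of $\fChS$ if and only if it is an $r$-quasi-isomorphism if and only if it is a weak equivalence of $\fChT$. Hence the defining biconditional of a Quillen equivalence holds trivially, and $\inadj{\id}{\fChT}{\fChS}{\id}$ is a Quillen equivalence. (Equivalently, one could note that every object is fibrant in both structures, so the derived unit is just a fibrant replacement, which is a weak equivalence.)

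The bicomplex statement is then handled by the identical argument with witness cycles in place of cycles: by \cref{bicomplexesSModelCategories} the weak equivalences of $\left(\bCh\right)_T$ and $\bChS$ are again both the $r$-quasi-isomorphisms, and since $0\in T\subseteq S$ a fibration of $\bChS$, being bidegreewise surjective on $s$-witness cycles for all $s\in S$, is in particular surjective on $s$-witness cycles for all $s\in T$, hence a fibration of $\left(\bCh\right)_T$; the remainder goes through verbatim. There is no substantive obstacle here, as the whole argument is formal once the weak equivalences are seen to agree. The only point requiring care is bookkeeping the direction of the adjunction: one must check that the structure with the larger indexing set $S$, which has fewer fibrations and correspondingly more cofibrations, is the target of the left adjoint $\id\colon\fChT\to\fChS$ (and the source of the right adjoint), after which the Quillen-equivalence half is immediate.
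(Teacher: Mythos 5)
Your proof is correct and matches the paper's approach: the paper simply cites the observation of \cite[Remark 3.17]{CELW} and notes it generalises, and that observation is precisely the argument you spell out --- the weak equivalences of $\fChT$ and $\fChS$ coincide (both being the $r$-quasi-isomorphisms), $T\subseteq S$ gives the containment of fibrations making $\id\dashv\id$ Quillen in the stated direction, and identity adjoints between structures with the same weak equivalences are trivially a Quillen equivalence. Your directional bookkeeping (larger $S$ means fewer fibrations, more cofibrations, hence $\fChS$ is the target of the left adjoint) is also right, so there is nothing to add.
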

\begin{proof}
  For filtered chains, $T=\left\{0\right\}$ and $S=\left\{0,1,\ldots,r\right\}$ this was observed in \cite[Remark 3.17]{CELW} and more generally this follows by the same observation.
  The case for bicomplexes is the same.
\end{proof}


\section{Adjoints to totalisation functors}\label{adjoints}
We construct a left adjoint $\mc{L}$ to the product totalisation functor $\Totp$, show that $\mc{L}$ applied to a representing $s$-cycle decomposes as a direct sum of an $s$-witness cycle and infinitely many copies of $0$-witness cycles, and finally show that the unit of the adjunction applied to an $s$-cycle is an $s$-quasi-isomorphism when $s\geq 1$.

We will use these facts later in applying Greenlees and Shipley's cellularization principle to show there are Quillen equivalences between $\fChS$ and $\bChS$.

\subsection{Left adjoint to the product totalisation functor}
\begin{defi}
  The functor $\mc{L}\colon\fCh\rightarrow\bCh$ is defined on a filtered chain complex $A$ by:
  \begin{equation*}
    \mc{L}^{i,i+n}(A)\coloneqq \frac{A^n}{F_{i-1}A^n}\oplus\frac{A^{n-1}}{F_iA^{n-1}}
  \end{equation*}
  where the differentials $d_0$ and $d_1$ are given on an $(x,y)\in\mc{L}(A)^{i,i+n}$ by:
  \begin{align*}
    d_0\colon(x,y)&\mapsto(dx,x-dy)\;,\\
    d_1\colon(x,y)&\mapsto(0,(-1)^{n+1}x)\;.
  \end{align*}
  On a morphism $f\colon A\rightarrow B$ of filtered chain complexes the functor $\mc{L}$ is given by $\mc{L}(f)^{i,i+n}\coloneqq \bar{f}_i^n\oplus\bar{f}_{i+1}^{n-1}$ where $\bar{f}_i^n$ denotes the induced morphism of $R$-modules $A^n/F_{i-1}A^n\rightarrow B^n/F_{i-1}B^n$.
\end{defi}
\begin{prop}\label{LTotpAdjunction}
  There is an adjunction $\inadj{\mc{L}}{\fCh}{\bCh}{\Totp}$.
\end{prop}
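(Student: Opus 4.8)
The plan is to prove the adjunction $\mc{L}\dashv\Totp$ by exhibiting a bijection $\Hom_{\bCh}(\mc{L}(A),B)\cong\Hom_{\fCh}(A,\Totp(B))$, natural in $A$ and $B$. I would not attempt a graded-module-level adjunction: forgetting differentials, a bicomplex map out of $\mc{L}(A)$ records \emph{two} families of module maps (one per summand of $\mc{L}^{i,i+n}(A)$) whereas a filtration-preserving map into $\Totp(B)$ records only one, so the differentials must be used to cut the data down. This is the conceptual heart of the argument.

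First I would analyse the right-hand side. Underlying a morphism $f\colon A\to\Totp(B)$ is a collection of $R$-module maps $f_i^n\colon A^n\to B^{i,i+n}$, the components of $f^n\colon A^n\to\prod_i B^{i,i+n}$. The condition that $f$ preserve the filtration, $f(F_pA^n)\subseteq\prod_{i\leq p}B^{i,i+n}$, is exactly that $f_i^n$ vanish on $F_{i-1}A^n$ for each $i$, hence that $f_i^n$ descend to $\bar f_i^n\colon A^n/F_{i-1}A^n\to B^{i,i+n}$. Reading off the $i$-th component of the chain-map identity $d^{\Totp}f=fd$ (using $d^{\Totp}(b_i)_i=(d_0b_i+(-1)^nd_1b_{i+1})_i$) shows that $f$ is a chain map if and only if
\[
  d_0\bar f_i^n+(-1)^nd_1\bar f_{i+1}^n=\bar f_i^{n+1}d\qquad(\star)
\]
for all $i,n$, with the canonical projections between the quotients understood. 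So the right-hand hom-set is identified with families $(\bar f_i^n)$ satisfying $(\star)$. Note that products (not coproducts) are what make this identification work, since no finiteness is imposed on the $\bar f_i^n$.

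Next I would analyse the left-hand side. A bicomplex morphism $g\colon\mc{L}(A)\to B$ is given in each bidegree by a pair $(P_i^n,Q_i^n)$ with $P_i^n\colon A^n/F_{i-1}A^n\to B^{i,i+n}$ and $Q_i^n\colon A^{n-1}/F_iA^{n-1}\to B^{i,i+n}$, subject to commutation with $d_0$ and $d_1$. The key computation is that commutation with $d_1$, via the formula $d_1(x,y)=(0,(-1)^{n+1}x)$, forces $Q$ to be determined by $P$ as $Q_i^n=(-1)^nd_1^BP_{i+1}^{n-1}$, the remaining part of that condition holding automatically since $d_1^Bd_1^B=0$. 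Substituting this into the $d_0$-condition and separating the independent variables $x$ and $y$, the $x$-part reproduces precisely $(\star)$ with $P=\bar f$, while the $y$-part becomes an identity that follows formally from $(\star)$ together with $d_0^Bd_1^B=d_1^Bd_0^B$ and $d_1^Bd_1^B=0$. Hence a morphism $g$ is exactly a family $(P_i^n)$ satisfying $(\star)$.

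Combining the two analyses yields the bijection, sending $g$ with first-summand data $P_i^n$ to the filtered chain map $f$ with $\bar f_i^n=P_i^n$; naturality in both variables is routine, as every map involved is assembled from $d_0^B$, $d_1^B$ and canonical projections. The main obstacle I anticipate is the sign-and-index bookkeeping in the middle paragraph: one must check carefully that the two bicomplex-commutation conditions on $g$ collapse to the single chain-map condition $(\star)$ on $f$, and in particular that $Q$ carries no independent information and that the second family of $d_0$-identities is automatic. Everything else is bookkeeping, but the signs $(-1)^n$ and $(-1)^{n+1}$ must be tracked precisely through the reindexing for the two conditions to line up.
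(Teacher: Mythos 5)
Your proof is correct and is essentially the paper's argument: the paper exhibits the same hom-set bijection by writing down the two explicit constructions $f\mapsto\tilde f$ (extracting the first-summand data, your $P_i^n=\bar f_i^n$) and $g\mapsto\hat g$ (whose second-summand formula $(-1)^nd_1g^{i+1,i+n}(y)$ is precisely your derived identity $Q_i^n=(-1)^nd_1^BP_{i+1}^{n-1}$) and asserts they are well defined and mutually inverse. Your write-up simply supplies the verification the paper leaves to the reader, organizing it around the single compatibility condition $(\star)$ and correctly using $d_1d_1=0$ and $d_0d_1=d_1d_0$ to show $Q$ carries no independent information.
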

\begin{proof}
  Given a morphism $f\colon \mc{L}A\rightarrow B$ in $\bCh$ one can produce a morphism of filtered chain complexes $\tilde{f}\colon A\rightarrow \Totp(B)$ by:
  \begin{equation*}
    \begin{tikzcd}
      A^n\arrow[rr,dashed, "\tilde{f}"]\arrow[dr,"q\circ\Delta"']& &\Totp(B)^n=\prod_i B^{i,i+n}\\
      &\prod_i\frac{A^n}{F_{i-1}A^n}\arrow[ur, "{(f^{i,i+n}(-,0))_i}"']
    \end{tikzcd}
  \end{equation*}
  where the map $q\circ \Delta$ is the composite $A^n\rightarrow \prod_i A^n\rightarrow \prod_i A^n/F_{i-1}A^n$ of an infinite diagonal followed by quotients in each component.

  Conversely given a morphism $g\colon A\rightarrow \Totp(B)$ of filtered chain complexes one obtains a morphism of bicomplexes $\hat{g}\colon \mc{L}(A)\rightarrow B$ defined by:
  \begin{align*}
    \hat{g}^{i,i+n}\colon \mc{L}(A)^{i,i+n}&\rightarrow B^{i,i+n}\\
    (\bar{x},\bar{y})&\mapsto g^{i,i+n}(x)+(-1)^nd_1g^{i+1,i+1+n-1}(y)
  \end{align*}
  where $x$ and $y$ are any lifts of the element $(\bar{x},\bar{y})\in \mc{L}(A)^{i,i+n}=A^n/F_{i-1}A^n\oplus A^{n-1}/F_iA^{n-1}$ to an element of $(x,y)\in A^n\oplus A^{n-1}$.
  This is a well defined morphism of bicomplexes.

  The constructions $f\mapsto \tilde{f}$ and $g\mapsto \hat{g}$ can be verified to be inverse to each other.
\end{proof}
The adjunction of \cref{LTotpAdjunction} is clearly not an equivalence of categories: consider a non-zero object $A\in\fCh$ whose filtration is such that $F_pA=A$ for all $p$.
The bicomplex $\mc{L}(A)$ is then the $0$ bicomplex so any morphism out of it also $0$, however there are non-zero morphisms with domain $A$ in the category $\fCh$.
\subsection{Right adjoint to the coproduct totalisation functor}
The coproduct totalisation functor $\Totc$, where one replaces $\prod$ with $\bigoplus$, also has a right adjoint $\mc{R}$.
The proof of this adjunction is dual to that for $\mc{L}\dashv\Totp$.
We state the definition for completeness but make no use of it in this paper.
\begin{defi}
  The functor $\mc{R}\colon\fCh\rightarrow\bCh$ is defined on a filtered chain complex $A$ by:
  \begin{equation*}
    \mc{R}^{i,i+n}(A)\coloneqq F_{i-1}C^{n+1}\oplus F_iC^n
  \end{equation*}
  where the differentials $d_0$ and $d_1$ are given on an $(x,y)\in\mc{R}(A)^{i,i+n}$ by:
  \begin{align*}
    d_0\colon(x,y)&\mapsto(-dx,x+dy)\;,\\
    d_1\colon(x,y)&\mapsto(0,(-1)^{n+1}x)\;.
  \end{align*}
  On a morphism $f\colon A\rightarrow B$ of filtered chain complexes the functor $\mc{R}$ is given by $\mc{R}(f)^{i,i+n}\coloneqq f_{|i-1}^{n+1}\oplus f_{|i}^n$ where $f_{|i}^n$ denotes the morphism of $R$-modules $F_iA^n\rightarrow F_iB^n$.
\end{defi}
\begin{prop}
  There is an adjunction $\inadj{\Totc}{\bCh}{\fCh}{\mc{R}}$.\qed
\end{prop}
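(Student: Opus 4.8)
The plan is to establish the natural bijection $\Hom_{\fCh}(\Totc A, B) \cong \Hom_{\bCh}(A, \mc{R}B)$ for $A \in \bCh$ and $B \in \fCh$ by exhibiting mutually inverse constructions that mirror those in the proof of \cref{LTotpAdjunction}. I should first record that, although the statement is morally ``dual'' to $\mc{L}\dashv\Totp$, there is no clean formal argument obtained by passing to opposite categories: neither $\fCh$ nor $\bCh$ is self-dual, since the relevant duality would have to exchange arbitrary products with coproducts and would involve $R$-linear duals of general modules. I therefore read ``dual'' as ``mirror-image explicit construction'', with the quotients $A^n/F_{i-1}A^n$ appearing in $\mc{L}$ replaced by the subobjects $F_iB^n$ appearing in $\mc{R}$, and the infinite products of $\Totp$ replaced by the coproducts of $\Totc$.

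Next I would spell out the two constructions. Given $g\colon A\to\mc{R}B$ in $\bCh$, I define $\hat g\colon\Totc A\to B$ on an element $(a_i)_i\in(\Totc A)^n=\bigoplus_i A^{i,i+n}$ by summing the $F_iB^n$-components of the $g^{i,i+n}(a_i)$; this sum is finite by finite support, and it carries $F_p(\Totc A)^n$ into $F_pB^n$ since the $i$th term lies in $F_iB^n$ while $a_i=0$ for $i>p$. Conversely, given $f\colon\Totc A\to B$ in $\fCh$, I define $\tilde f\colon A\to\mc{R}B$ in bidegree $(i,i+n)$ by
\[
  \tilde f^{i,i+n}(a)=\bigl((-1)^{n}f(\iota_{i-1}d_1 a),\, f(\iota_i a)\bigr)\in F_{i-1}B^{n+1}\oplus F_iB^n,
\]
where $\iota_j$ denotes the inclusion of a single coordinate into the relevant cototalisation and the first entry uses that $d_1a\in A^{i-1,i+n}$ has total degree $n+1$. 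These mirror the two constructions in \cref{LTotpAdjunction}, with the roles of product and coproduct, and of quotient and subobject, interchanged; here finite support replaces the well-definedness-up-to-lift issue encountered there.

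With the constructions in hand, the remaining work is to check that $\tilde f$ lands in the claimed subobjects and commutes with both $d_0$ and $d_1$, so that it is a morphism of bicomplexes; that $\hat g$ commutes with the cototalisation differential $(a_i)_i\mapsto(d_0a_i+(-1)^nd_1a_{i+1})_i$ and with the differential of $B$, so that it is a morphism of filtered chain complexes; and finally that $f\mapsto\tilde f$ and $g\mapsto\hat g$ are mutually inverse and natural in $A$ and $B$. The naturality and the inverse relations are formal once the chain-map conditions are in place.

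I expect the main obstacle to be purely bookkeeping: confirming that the sign in the first component of $\tilde f$ makes the $d_0$- and $d_1$-compatibility conditions hold simultaneously, given the signs built into $\mc{R}$ (where $d_0(x,y)=(-dx,x+dy)$ and $d_1(x,y)=(0,(-1)^{n+1}x)$) and into the cototalisation differential. This is the same delicate sign matching that appears in the verification of \cref{LTotpAdjunction}, transported through the grading shift from degrees $(n,n-1)$ to $(n+1,n)$; no new ideas are required beyond careful tracking of these conventions.
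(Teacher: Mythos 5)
Your overall strategy is exactly the one the paper intends: the paper gives no details for this proposition, saying only that the proof is dual to that of \cref{LTotpAdjunction}, and your explicit mirrored constructions (including the correct observation that ``dual'' here cannot be taken formally via opposite categories) fill this in along the intended lines. In particular, $\hat g((a_i)_i)=\sum_i g_2(a_i)$, where $g=(g_1,g_2)$ with $g_2(a_i)\in F_iB^n$, is well defined by finite support and filtration-preserving as you say, it is a chain map because the telescoping sum $\sum_i g_1(a_i)-\sum_i g_1(a_{i+1})$ vanishes, and the round trip $\widehat{\tilde f}((a_i)_i)=\sum_i f(\iota_i a_i)=f((a_i)_i)$ is immediate.

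However, the sign you commit to is wrong: the first component of $\tilde f^{i,i+n}(a)$ must be $(-1)^{n+1}f(\iota_{i-1}d_1a)$, not $(-1)^{n}f(\iota_{i-1}d_1a)$, and with your sign $\tilde f$ is not a morphism of bicomplexes. Two independent checks force this. First, compatibility of a bicomplex map $g\colon A\to\mc{R}B$ with $d_1$ reads $g_2(d_1a)=(-1)^{n+1}g_1(a)$, so the round trip $\widetilde{\hat g}=g$ recovers the first component, via $\pm\hat g(\iota_{i-1}d_1a)=\pm g_2(d_1a)=\pm(-1)^{n+1}g_1(a)$, only when the sign is $(-1)^{n+1}$. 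Second, writing $\varepsilon_n$ for the unknown sign, the second component of $d_0\tilde f(a)$ is $\varepsilon_nf(\iota_{i-1}d_1a)+df(\iota_ia)$, and since $df(\iota_ia)=f(d^{\Totc}\iota_ia)=f(\iota_id_0a)+(-1)^nf(\iota_{i-1}d_1a)$, matching the second component $f(\iota_id_0a)$ of $\tilde f(d_0a)$ forces $\varepsilon_n+(-1)^n=0$, i.e.\ $\varepsilon_n=(-1)^{n+1}$; with your choice $\varepsilon_n=(-1)^n$ a spurious term $2(-1)^nf(\iota_{i-1}d_1a)$ survives, and the $d_1$-check likewise fails by a sign. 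With $\varepsilon_n=(-1)^{n+1}$ everything you list goes through: the first components in the $d_0$-check match because $d^{\Totc}\iota_{i-1}d_1a=\iota_{i-1}d_0d_1a$ (using $d_1d_1=0$) together with the paper's commutation convention $d_0d_1=d_1d_0$, and the $d_1$-check closes since $(-1)^{n+1}\varepsilon_n=1$. So this is a local, fixable slip in exactly the bookkeeping you flagged as the main risk, rather than a structural flaw in the argument.
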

\subsection{The left adjoint applied to $s$-cycles}
\begin{lemm}\label{LOfSCycle}
  There is an isomorphism of bicomplexes
  \begin{equation*}
    \mc{L}\Z_s(p,n)\cong
    \begin{cases}
      \bigoplus_{k\geq0}\ZW_0(p-s-k,p-s-k+n)\, & s=0,\\
      \ZW_s(p,p+n)\oplus\bigoplus_{k\geq0}\ZW_0(p-s-k,p-s-k+n)\,, & s\geq 1.
    \end{cases}
  \end{equation*}
\end{lemm}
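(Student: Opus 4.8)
The plan is to compute $\mc{L}\Z_s(p,n)$ explicitly in every bidegree, rewrite it in a basis adapted to its differential, and then read off the two kinds of summands directly. I would begin by unwinding the definition of $\mc{L}$. Since $\Z_s(p,n)=\left(R_{(p)}^n\rightarrow R_{(p-s)}^{n+1}\right)$ is concentrated in cohomological degrees $n$ and $n+1$, the bicomplex $\mc{L}\Z_s(p,n)$ is concentrated on the three anti-diagonals $j-i\in\{n,n+1,n+2\}$. Computing the quotients $\Z_s(p,n)^m/F_{i-1}\Z_s(p,n)^m$ and $\Z_s(p,n)^{m-1}/F_i\Z_s(p,n)^{m-1}$ bidegree by bidegree produces four families of generating copies of $R$: write $a_i$ for the generator in bidegree $(i,i+n)$ (present for $i\leq p$), $b_i$ and $c_i$ for the two generators in bidegree $(i,i+n+1)$ (present for $i\leq p-s$ and $i\leq p-1$ respectively), and $e_i$ for the generator in bidegree $(i,i+n+2)$ (present for $i\leq p-s-1$). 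Feeding these into the formulas for $d_0$ and $d_1$ gives the differentials, the decisive ones being
\[
  d_0a_i=b_i+c_i,\qquad d_1a_i=(-1)^{n+1}c_{i-1},
\]
together with $d_0b_i=e_i=-d_0c_i$, $\ d_1b_i=(-1)^n e_{i-1}$, $\ d_1c_i=0$ and $d_0e_i=d_1e_i=0$. In particular every bidegree of $\mc{L}\Z_s(p,n)$ is a free $R$-module of finite rank.

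The key move is that the generator appearing as $d_0a_i$ is the \emph{sum} $b_i+c_i$; I would therefore pass to the adapted basis in which $b_i$ is replaced by $\hat b_i\coloneqq b_i+c_i=d_0a_i$. In this basis $d_0\hat b_i=0$ and $d_1\hat b_i=(-1)^n e_{i-1}$, and for $s\geq1$ the staircase summand becomes visible at once: the generators $a_p,a_{p-1},\dots,a_{p-s+1}$ together with $c_{p-1},c_{p-2},\dots,c_{p-s}$ span a sub-bicomplex closed under both differentials. Indeed $d_0a_p=0$ since $b_p$ and $c_p$ are both absent, $d_0a_{p-k}=c_{p-k}$ for $1\leq k\leq s-1$ since $b_{p-k}$ is absent, and each $c_j$ occurring here ($p-s\leq j\leq p-1$) satisfies $d_0c_j=0$ because $e_j$ is absent; after rescaling the generators by signs this is exactly the staircase differential of $\ZW_s(p,p+n)$.

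The generators not used above are $a_i,\hat b_i$ for $i\leq p-s$ and $c_i,e_i$ for $i\leq p-s-1$, and I would group these into squares indexed by $k\geq0$, the $k$-th square consisting of $a_{p-s-k}$, $\hat b_{p-s-k}$, $c_{p-s-k-1}$ and $e_{p-s-k-1}$ with $a_{p-s-k}$ as its free corner. The relations $d_0a_{p-s-k}=\hat b_{p-s-k}$, $d_1a_{p-s-k}=(-1)^{n+1}c_{p-s-k-1}$, $d_0c_{p-s-k-1}=-e_{p-s-k-1}$ and $d_1\hat b_{p-s-k}=(-1)^n e_{p-s-k-1}$ exhibit the $k$-th square, again after a sign rescaling of the generators (legitimate since $-1$ is a unit in $R$), as a copy of $\ZW_0(p-s-k,p-s-k+n)$. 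Every remaining generator lies in exactly one square and, by the finiteness noted above, each bidegree meets only finitely many squares, so the bidegreewise direct sum $\ZW_s(p,p+n)\oplus\bigoplus_{k\geq0}\ZW_0(p-s-k,p-s-k+n)$ is genuinely a bicomplex and is isomorphic to $\mc{L}\Z_s(p,n)$. The case $s=0$ is the degenerate version of the same argument: now $b_p$ is present while $c_p$ is not, the substitution $\hat b_i=b_i+c_i$ goes through verbatim, no staircase is split off, and the squares generated by $a_{p-k}$ for $k\geq0$ already exhaust the bicomplex, yielding $\bigoplus_{k\geq0}\ZW_0(p-k,p-k+n)$.

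The main obstacle is precisely the coupling $d_0a_i=b_i+c_i$: grouping the original generators naively (say $a_i$ with $b_i$, $c_{i-1}$ and $e_{i-1}$) does not produce a sub-bicomplex, because $d_0a_i$ is not $b_i$ but the diagonal combination $b_i+c_i$. Recognising that the correct basis element is $\hat b_i=b_i+c_i$, and checking that this substitution interacts correctly with the boundary indices $i=p$ and $i=p-s$ where only some of $b_i,c_i,e_i$ survive, is the real content of the proof; everything else is bookkeeping of bidegrees and signs.
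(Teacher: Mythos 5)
Your proposal is correct and follows essentially the same route as the paper: the paper's proof consists of drawing the bicomplex $\mc{L}\Z_s(p,n)$ explicitly and invoking a bidegreewise change of basis to split it as a staircase $\ZW_s(p,p+n)$ glued to infinitely many squares $\ZW_0(p-s-k,p-s-k+n)$, which is precisely your argument with the change of basis $\hat{b}_i=b_i+c_i=d_0a_i$ and the boundary cases at $i=p$ and $i=p-s$ made explicit. Your write-up in fact supplies the details (the adapted basis, the sign rescalings, and the verification that the staircase and squares are each closed under both differentials) that the paper leaves to the reader.
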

\begin{proof}
  The bicomplex $\mc{L}\Z_s(p,n)$ is depicted in \cref{figu:LZs} which appears similar to a copy of $\ZW_s(p+n,n)$ glued with an infinite number of copies of shifted $\ZW_0(\ast,\ast)$.
  By taking a change of basis bidegreewise one obtains the isomorphism of the lemma.
  \begin{figure}[h]
    \centering
    \begin{tikzpicture}[xscale=1.7,yscale=1.4]
      \node (00) at (0,0) {$R$};
      \node (-1-1) at (-1,-1) {$R$};
      \node (-2-2) at (-2,-2) {$R$};
      \node (-3-3) at (-3,-3) {$R$};
      \node (-4-4) at (-4,-4) {$R$};
      \node (-5-5) at (-5,-5) {$R$};
      \node (-6-6) at (-6,-6) {$\iddots$};
      \node (-10) at (-1,0) {$R$};
      \node (-2-1) at (-2,-1) {$\iddots$};
      \node (-3-2) at (-3,-2) {$R$};
      \node (-4-3) at (-4,-3) {$R\oplus R$};
      \node (-5-4) at (-5,-4) {$R\oplus R$};
      \node (-6-5) at (-6,-5) {$R\oplus R$};
      \node (-7-6) at (-7,-6) {$\iddots$};
      \node (-5-3) at (-5,-3) {$R$};
      \node (-6-4) at (-6,-4) {$R$};
      \node (-7-5) at (-7,-5) {$R$};
      \draw[->] (00) -- node[anchor=south] {$(-1)^{n+1}$} (-10);
      \draw[->] (-1-1) -- node[anchor=south] {$(-1)^{n+1}$} (-2-1);
      \draw[->] (-2-2) -- node[anchor=south] {$(-1)^{n+1}$} (-3-2);
      \draw[->] (-3-3) -- node[anchor=north] {$(-1)^{n+1}i_1$} (-4-3);
      \draw[->] (-4-4) -- node[anchor=north] {$(-1)^{n+1}i_1$} (-5-4);
      \draw[->] (-5-5) -- node[anchor=north] {$(-1)^{n+1}i_1$} (-6-5);
      \draw[->] (-1-1) -- node[anchor=west] {$1$} (-10);
      \draw[->] (-2-2) -- node[anchor=west] {$1$} (-2-1);
      \draw[->] (-3-3) -- node[anchor=west] {$1$} (-3-2);
      \draw[->] (-4-4) -- node[anchor=east] {$i_1$} (-4-3);
      \draw[->] (-5-5) -- node[anchor=east] {$i_1$} (-5-4);
      \draw[->] (-4-3) -- node[anchor=south] {$(-1)^n\pi_1$} (-5-3);
      \draw[->] (-5-4) -- node[anchor=south] {$(-1)^n\pi_1$} (-6-4);
      \draw[->] (-6-5) -- node[anchor=south] {$(-1)^n\pi_1$} (-7-5);
      \draw[->] (-5-4) -- node[anchor=east] {$\begin{pmatrix}0&-1\end{pmatrix}$} (-5-3);
      \draw[->] (-6-5) -- node[anchor=east] {$\begin{pmatrix}0&-1\end{pmatrix}$} (-6-4);
    \end{tikzpicture}
    \caption{The bicomplex $\mc{L}\Z_s(p,n)$}
    \label{figu:LZs}
  \end{figure}
\end{proof}
\begin{prop}\label{unitIsSIsomorphism}
  For $s\geq 1$ the unit of the adjunction $\mc{L}\dashv\Totp$ applied to an $s$-cycle $\Z_s(p,n)\rightarrow \Totp\mc{L}\Z_s(p,n)$ is an isomorphism on the $s$-page.
\end{prop}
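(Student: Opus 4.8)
The plan is to compute the $s$-page on both sides and then check that the unit realises the resulting comparison on generators; the key simplification is that bidegree $(p,p+n)$ turns out to be one-dimensional, so the matching there is immediate. First I would make the unit explicit. Taking $f=\id$ in the proof of \cref{LTotpAdjunction}, the unit $\eta\colon\Z_s(p,n)\to\Totp\mc{L}\Z_s(p,n)$ sends an element $a\in\Z_s(p,n)^m$ to the tuple $((\bar a_i,0))_i$ whose $i$-th entry is the class of $a$ in $\Z_s(p,n)^m/F_{i-1}\Z_s(p,n)^m$. Writing $x$ for the degree-$n$ generator (filtration $p$) and $y=dx$ for the degree-$(n+1)$ generator (filtration $p-s$), one records that the leading term of $\eta(x)$ sits in filtration exactly $p$ since $\bar x_i\neq0$ precisely for $i\leq p$.

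Next I would describe the target $s$-page. Since $s\geq1$, \cref{LOfSCycle} gives $\mc{L}\Z_s(p,n)\cong\ZW_s(p,p+n)\oplus\bigoplus_{k\geq0}\ZW_0(p-s-k,p-s-k+n)$. Each square $\ZW_0(\ast,\ast)$ has identity vertical differentials, so its $E_1$-page already vanishes and it contributes nothing to $E_j$ for any $j\geq1$. As the witness-cycle spectral sequence of a bicomplex coincides with the spectral sequence of its product totalisation, this identifies $E_s^{\ast,\ast}(\Totp\mc{L}\Z_s(p,n))$ with $E_s^{\ast,\ast}(\ZW_s(p,p+n))$. Computing $d_0$-cohomology on the staircase of \cref{figu:ZWr}, and noting $d_1,\dots,d_{s-1}$ vanish for bidegree reasons, shows this $s$-page is a copy of $R$ in bidegree $(p,p+n)$ and a copy of $R$ in bidegree $(p-s,p+n-s+1)$; since the finite bicomplex $\ZW_s(p,p+n)$ has acyclic total complex (the total differential $a_k\mapsto b_{k-1}\pm b_k$ is an isomorphism), the differential $d_s$ between these two groups is an isomorphism. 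The analogous, easier computation for the filtered complex shows $E_s^{\ast,\ast}(\Z_s(p,n))$ is also $R$ in bidegrees $(p,p+n)$ and $(p-s,p+n-s+1)$, generated by $[x]$ and $[y]$, with $d_s[x]=[y]$.

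Finally I would identify $E_s(\eta)$ with this isomorphism. As $\eta$ is a morphism of filtered chain complexes it induces a bidegree-preserving map of $s$-pages commuting with $d_s$; since $d_s$ is an isomorphism on both sides, it suffices to treat bidegree $(p,p+n)$. Here the decisive observation is that $\mc{L}\Z_s(p,n)$ is one-dimensional in bidegree $(p,p+n)$: all $\ZW_0$-summands sit in first coordinate strictly below $p$, and $\mc{L}^{p,p+n}\Z_s(p,n)=\Z_s(p,n)^n/F_{p-1}\Z_s(p,n)^n\cong R$ is exactly the generator $a_0$ of the $\ZW_s(p,p+n)$-summand, so no change of basis is needed there. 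Because $[x]$ has filtration exactly $p$, the image of $\eta(x)$ in $\mathrm{gr}_p$ is precisely this generator, whence $E_s^{p,p+n}(\eta)$ is multiplication by a unit. Commutativity with the isomorphism $d_s$ then forces $E_s^{p-s,p+n-s+1}(\eta)$ to be an isomorphism as well, giving the claim.

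The step I expect to be most delicate is the target-side computation of \cref{figu:ZWr}: correctly running the $d_0$-cohomology together with the change of basis supplied by \cref{LOfSCycle}, verifying the vanishing of $d_1,\dots,d_{s-1}$, and establishing the acyclicity of the total complex of $\ZW_s(p,p+n)$ that makes $d_s$ an isomorphism. By contrast, the comparison of generators is painless once one notices that bidegree $(p,p+n)$ is one-dimensional, which is what lets the argument bypass the infinite product defining $\Totp$ and the sign bookkeeping in $\mc{L}$.
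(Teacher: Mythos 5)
Your proof is correct and takes essentially the same route as the paper's: decompose $\mc{L}\Z_s(p,n)$ via \cref{LOfSCycle}, discard the acyclic $\ZW_0$ summands, identify both $s$-pages as two copies of $R$ joined by an isomorphism $d_s$, and check that the unit (the paper's ``infinite diagonal of $1$s'') carries the generator to the generator in bidegree $(p,p+n)$ --- your version merely fills in details the paper leaves implicit, such as the acyclic-total-complex argument making $d_s$ an isomorphism and the conjugation by $d_s$ handling the second bidegree. Incidentally, your target bidegree $(p-s,p+n-s+1)$ is the correct one; the paper's stated $(p-s-1,p+n-s)$ appears to be a typo.
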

\begin{proof}
  This follows from \cref{LOfSCycle}: one can compute that the unit of the adjunction $\Z_s(p,n)\rightarrow \Totp\mc{L}\Z_s(p,n)$, being the adjunct of $\id\colon \mc{L}\Z_s(p,n)\rightarrow \mc{L}\Z_s(p,n)$, sends the element $1_{(p)}^n$, a generator of the copy of $R_{(p)}^n$ in $\Z_s(p,n)$, to the infinite diagonal of $1$s in $\Totp\mc{L}\Z_s(p,n)=\prod_{k\leq p}R$.
  The $s$-page of both $\Z_s(p,n)$ and $\Totp\mc{L}\Z_s(p,n)$ both have a copy of $R$ in bidegrees $(p,p+n)$ and $(p-s-1,p+n-s)$ with an identity differential between them (the case for $\Totp\mc{L}\Z_s(p,n)$ follows from \cref{LOfSCycle}).
  
  The infinite product of $1$s we've just described is a generator of the copy of $R$ in bidegree $(p,p+n)$ of the spectral sequence just described hence the unit induces an isomorphism between the $s$-pages of the associated spectral sequences.
\end{proof}

\section{Properness}\label{properness}
The model categories $\fChS$ and $\bChS$ are right proper by construction since every object is fibrant (a proof of this is dual to \cite[Lemma 9.5]{GJ}).
To demonstrate $\fChS$ and $\bChS$ are additionally left proper we make use of a method from a paper of Lack, \cite{Lack}.
Lack shows the category of \textit{$2$-categories} with \textit{$2$-functors} as morphisms is left proper.

We later use left properness to identify sets of cofibrant objects detecting weak equivalences in $\fChS$ and $\bChS$ using a result of Dugger.
\begin{prop}\label{LacksProposition}
  Let $\mc{M}$ be a finitely cofibrantly generated model category with generating cofibrations $I$ such that whenever there is a double pushout diagram of the form
  \begin{equation}\label{LacksPropernessDiagram}
    \begin{tikzcd}
      S\arrow[r]\arrow[d,"i"']
      \arrow[dr,phantom,very near end, "\ulcorner"]
      &A\arrow[d,"f"]\arrow[r,two heads, "\pi", "\sim"']
      \arrow[dr,phantom,very near end, "\ulcorner"]
      &B\arrow[d]\\
      D\arrow[r]&C\arrow[r,"\pi'"]&P
    \end{tikzcd}
  \end{equation}
  with $i$ a generating cofibration and $\pi$ an acyclic fibration then $\pi'$ is an acyclic fibration.
  Then $\mc{M}$ is a left proper model category.
\end{prop}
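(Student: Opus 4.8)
The plan is to verify left properness directly from the definition, reducing a general pushout of a weak equivalence along a cofibration to the single-cell situation recorded in the hypothesis by means of two reductions followed by a transfinite induction whose successor step is exactly the assumed double-pushout property.

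\textbf{First reduction (to acyclic fibrations).} Let $w\colon A\to B$ be a weak equivalence and $j\colon A\to C$ a cofibration; we must show the pushout $C\to B\cup_A C$ of $w$ along $j$ is a weak equivalence. Factor $w=p\circ k$ with $k\colon A\to A'$ an acyclic cofibration and $p\colon A'\to B$ an acyclic fibration. Form the pushout of $j$ along $k$, giving a cofibration $j'\colon A'\to C'\coloneqq C\cup_A A'$ together with $C\to C'$, and then push $p$ out along $j'$. By the pasting law for pushouts the map $C\to B\cup_A C$ is the composite $C\to C'\to B\cup_A C$, whose first map is a pushout of the acyclic cofibration $k$, hence itself an acyclic cofibration (this closure holds in any model category and needs no hypothesis). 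By two-out-of-three it therefore suffices to show that \emph{the pushout of an acyclic fibration along a cofibration is a weak equivalence}.

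\textbf{Second reduction (to relative cell complexes).} By the small object argument every cofibration $j\colon A\to C$ factors as $A\to\hat C\to C$ with $A\to\hat C$ a relative $I\mathdash\Cell$ complex and $\hat C\to C$ an acyclic fibration; lifting $\id_C$ against $\hat C\to C$ exhibits $j$ as a retract, under $A$, of the cell complex $\hat{\jmath}\colon A\to\hat C$. Pushing a fixed acyclic fibration $p\colon A\to B$ out along this retract diagram, via the functor $X\mapsto(X\to B\cup_A X)$ on objects under $A$, realises the pushout of $p$ along $j$ as a retract of the pushout of $p$ along $\hat{\jmath}$. Since weak equivalences are closed under retracts, it suffices to show that \emph{the pushout of an acyclic fibration along a relative $I\mathdash\Cell$ complex is again an acyclic fibration}.

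\textbf{Transfinite induction.} Write $A=X_0\to X_1\to\cdots\to X_\lambda=\hat C$ for the cell presentation and set $Y_\beta\coloneqq B\cup_A X_\beta$, so that $\pi_\beta\colon X_\beta\to Y_\beta$ is the pushout of $p$ along $A\to X_\beta$, with $\pi_0=p$ an acyclic fibration. At a successor $\beta+1$ the stage $X_\beta\to X_{\beta+1}$ is a pushout of a generating cofibration $i\colon S\to D$; feeding $\pi_\beta$, this cell attachment, and the pasting identity $Y_{\beta+1}=Y_\beta\cup_{X_\beta}X_{\beta+1}$ into the double-pushout diagram of the hypothesis (with $A,\pi,i,f,\pi'$ taken to be $X_\beta,\pi_\beta,i,\,X_\beta\to X_{\beta+1},\,\pi_{\beta+1}$) shows $\pi_{\beta+1}$ is again an acyclic fibration. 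At a limit $\beta$ one has $X_\beta=\colim_{\gamma<\beta}X_\gamma$ and $Y_\beta=\colim_{\gamma<\beta}Y_\gamma$, since the pushout $B\cup_A-$ commutes with colimits; to see $\pi_\beta$ is an acyclic fibration I would check it lies in $I\mathdash\Inj$, solving any lifting problem against an $i\colon S\to D$ in $I$. Because $\mc{M}$ is \emph{finitely} cofibrantly generated, $S$ and $D$ are finite, so such a lifting problem factors through some stage $X_\gamma\to Y_\gamma$, where $\pi_\gamma$ is an acyclic fibration by induction, and the lift obtained there maps back into the colimit. Hence $\pi_\beta\in I\mathdash\Inj$, and the induction terminates with $\pi_\lambda\colon\hat C\to B\cup_A\hat C$ an acyclic fibration, as required.

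I expect the main obstacle to be the bookkeeping at the limit stages: one must genuinely invoke finiteness of the domains and codomains of the generating cofibrations to solve lifting problems at a bounded ordinal, and must verify that both towers $(X_\beta)_\beta$ and $(Y_\beta)_\beta$ are transfinite compositions of cofibrations so that this finiteness applies. The successor stage, by contrast, is a verbatim application of the assumed double-pushout property, and the two reductions are formal consequences of functorial factorisation, the pasting law, and closure of weak equivalences and acyclic cofibrations under the relevant operations.
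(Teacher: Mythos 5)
Your argument is correct and is essentially the proof the paper relies on: the paper simply cites the three-step argument of Lack's Theorem 6.3, which proceeds exactly as you do — factor the weak equivalence to reduce to acyclic fibrations, use the retract-of-cellular trick to reduce to relative $I\mathdash\Cell$ maps, then run a transfinite induction whose successor step is the hypothesised double-pushout property and whose limit step uses finiteness of the domains and codomains of the generating cofibrations. No gaps; the bookkeeping you flag at limit stages (both towers being transfinite compositions of cofibrations, and equalising the lifting problem at a finite stage) is handled just as you indicate.
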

\begin{proof}
  The proof is done in three steps in \cite[Theorem 6.3]{Lack}.
\end{proof}
\subsection{Properness of $\fChS$}
We wish to demonstrate that the model categories $\fChS$ of \cref{filteredChainsSModelCategories} are left proper and will do so by demonstrating the conditions of \cref{LacksProposition} hold for $\mc{M}=\fChS$.
We consider then a double pushout diagram of the form
\begin{equation}\label{fChSPropernessDiagram}
  \begin{tikzcd}
    \Z_{r+1}(p,n)\arrow[r]\arrow[d,"\phi"']
    \arrow[dr,phantom,very near end, "\ulcorner"]
    &A\arrow[d,"f"]\arrow[r,two heads, "\pi", "\sim"']
    \arrow[dr,phantom,very near end, "\ulcorner"]
    &B\arrow[d]\\
    \B_{r+1}(p,n)\arrow[r]&A'\arrow[r,"\pi'"]&B'
  \end{tikzcd}
\end{equation}
in the model category $\fChS$.
\begin{prop}\label{PushoutSurjectivity}
  Let $\pi\colon A\rightarrow B$ be a morphism of filtered chain complexes and $\pi'$ as in Diagram \ref{fChSPropernessDiagram}. We have the following surjectivity results on cycles:
  \begin{enumerate}
  \item suppose $\pi$ is $Z_s$-bidegreewise surjective for some $s\leq r$, then the pushout $\pi'$ is $Z_s$-bidegreewise surjective,
  \item suppose $\pi$ is $Z_r$-bidegreewise surjective and an $r$-weak equivalence, then the pushout $\pi'$ is $Z_{r+1}$-bidegreewise surjective.
  \end{enumerate}
\end{prop}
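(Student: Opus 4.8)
The plan is to first unravel the two pushouts and then verify the cycle-surjectivity statements by an explicit cycle lift, with the filtration bookkeeping doing most of the work. Since both squares in Diagram~\ref{fChSPropernessDiagram} are pushouts, the pasting law identifies $B'$ with the pushout of the composite $\Z_{r+1}(p,n)\to A\xrightarrow{\pi}B$ along $\phi$, so both $A'$ and $B'$ are obtained from $A$ and $B$ by attaching a single $\B_{r+1}(p,n)$-cell. Computing the cofibre of $\phi$ one finds $\B_{r+1}(p,n)/\Z_{r+1}(p,n)\cong\Z_{r+1}(p+r,n-1)$, giving compatible short exact sequences $0\to A\to A'\to Q\to0$ and $0\to B\to B'\to Q\to0$ with $Q=\Z_{r+1}(p+r,n-1)$, which split as graded modules and on which $\pi'$ restricts to $\pi$ and to the identity of $Q$. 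Concretely $A'$ is $A$ together with two new generators $u$ (cohomological degree $n-1$, filtration $p+r$) and $w$ (degree $n$, filtration $p-1$) and twisted differentials $du=\alpha-w$, $dw=d\alpha$, where $\alpha\in Z_{r+1}^{p,p+n}(A)$ is the attaching $(r+1)$-cycle; $B'$ has the same description with $\beta\coloneqq\pi\alpha$ in place of $\alpha$.

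To prove surjectivity on $Z_s$ I would take a cycle $\zeta=(y;c)\in Z_s^{q',q'+m}(B')$ and lift it to $A'$ keeping the cell coordinate $c$ fixed (as $\pi'$ is the identity on $Q$), so that the problem reduces to finding a lift $x$ of $y$ with a prescribed filtration bound on $dx$ together with a twisting term. The cases are organised by the cohomological degree $m$ (whether $c$ meets $u$ or $w$) and by the filtration $q'$. For part~(1) the decisive point is a filtration estimate: because $s\le r$ one checks that the twisting terms $\lambda\alpha$ and $\mu\,d\alpha$ already lie in $F_{q'-s}$, so the $s$-cycle condition on $\zeta$ decouples into the statement that $y$ is an $s$-cycle of $B$; the assumed $Z_s$-surjectivity of $\pi$ then supplies the lift, and reassembling with $c$ gives the required $s$-cycle of $A'$.

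For part~(2) this estimate breaks down exactly in the boundary filtrations (for instance $m=n-1$, $q'=p+r$, $\lambda\ne0$), and this is where the weak-equivalence hypothesis enters. I would first record the preliminary fact that a morphism which is $Z_r$-surjective and an $r$-weak equivalence is a trivial fibration in $\left(\fCh\right)_{\{r\}}$ and hence has the right lifting property against the cofibration $0\to\Z_{r+1}(p,n)$ --- this map being the pushout of $\phi_{r+1}$ along $\Z_{r+1}(p,n)\to0$ --- so that $\pi$ is automatically $Z_{r+1}$-surjective. Away from the boundary filtrations the decoupling still works and one lifts $y$ directly using this $Z_{r+1}$-surjectivity. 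In the boundary case $y$ is only an $r$-cycle whose defect from being an $(r+1)$-cycle equals $\lambda\beta=\pi(\lambda\alpha)$; lifting $y$ to an $r$-cycle $x_0$ via $Z_r$-surjectivity, the element $\delta\coloneqq dx_0+\lambda\alpha$ lies in $Z_{r+1}^{p,p+n}(A)$, while $\pi\delta$ is an $(r+1)$-boundary of $B$, so injectivity of $E_{r+1}(\pi)$ forces $\delta=d\rho+\sigma\in B_{r+1}^{p,p+n}(A)$ with $\rho\in Z_r^{p+r,p+r+n-1}(A)$ and $\sigma\in F_{p-1}A^n$.

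The main obstacle is the final reconciliation: the naive correction $x_0-\rho$ is a genuine $(r+1)$-cycle but its image under $\pi$ is off by $\pi\rho$, so it lifts $\zeta-(\pi\rho;0)$ rather than $\zeta$. The key observation that resolves this is that $\pi\rho$ is itself an $(r+1)$-cycle of $B$ --- from $d\pi\rho=\pi\delta-\pi\sigma\in F_{p-1}B^n$ --- so the $Z_{r+1}$-surjectivity of $\pi$ established above produces a lift $\rho'\in Z_{r+1}^{p+r,p+r+n-1}(A)$ of $\pi\rho$; then $x\coloneqq x_0-\rho+\rho'$ satisfies $\pi x=y$ and $dx+\lambda\alpha=\sigma+d\rho'\in F_{p-1}A^n$, so $(x;\lambda u)$ is the desired $(r+1)$-cycle of $A'$. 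The remaining boundary case ($m=n$, $q'=p-1$) is handled identically one degree higher. I expect this correction step --- simultaneously meeting the cycle condition and the prescribed image --- to be where the argument is genuinely delicate, and it is exactly the place where being an $r$-weak equivalence, and not merely $Z_r$-surjective, is indispensable.
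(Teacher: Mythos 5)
Your argument is correct: the explicit cell-attachment description of $A'$ and $B'$ (new generators $u$, $w$ with $du=\alpha-w$, $dw=d\alpha$, and $\pi'$ restricting to $\pi$ and the identity on the cofibre $\Z_{r+1}(p+r,n-1)$), the filtration estimates showing the cycle condition decouples whenever $s\le r$ or the filtration avoids $q'=p+r$ and $q'=p-1$, and the boundary-case correction $x=x_0-\rho+\rho'$ using injectivity of $E_{r+1}(\pi)$ together with the $Z_{r+1}$-surjectivity of $\pi$ (your trivial-fibration observation in $\left(\fCh\right)_{\{r\}}$) all verify. This is essentially the same approach as the paper's: its proof records that most surjectivity claims follow from the corresponding property of $\pi$ and delegates exactly your remaining cases --- the cycles meeting the newly attached cell --- to the explicit computations of \cite[Proposition 3.7.1.3]{BThesis}, which your write-up supplies in full.
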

\begin{proof}
  Most cycle surjectivity claims follow from the case for $\pi$.
  The remaining cases for cycles introduced in the pushout are checked in \cite[Proposition 3.7.1.3]{BThesis}.
\end{proof}
\begin{lemm}\label{KernelOfPushout}
  The kernel of the pushout $\pi'$ is $K=\ker(\pi\colon A\rightarrow B)$.
\end{lemm}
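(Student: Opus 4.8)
The plan is to compute both pushouts explicitly as cokernels in $\fCh$ and then read off the kernel of $\pi'$ degreewise, paying attention to the filtration since $\fCh$ is not abelian. Writing $g\colon\Z_{r+1}(p,n)\to A$ for the top-left map, the left-hand pushout presents $A'$ as the cokernel of $(g,-\phi)\colon\Z_{r+1}(p,n)\to A\oplus\B_{r+1}(p,n)$, with $f\colon A\to A'$ given by $a\mapsto[(a,0)]$; the right-hand pushout presents $B'$ as the cokernel of $(f,-\pi)\colon A\to A'\oplus B$, with $\pi'\colon A'\to B'$ given by $a'\mapsto[(a',0)]$.

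First I would show that $f$ is a monomorphism which moreover \emph{reflects} the filtration, i.e.\ $f(a)\in F_pA'$ if and only if $a\in F_pA$. Both assertions reduce to the corresponding statements for $\phi$. The map $\phi$ is injective, and I would check that it reflects the filtration by inspecting its two nonzero graded pieces: in degree $n$ it is the diagonal $R_{(p)}\to R_{(p)}\oplus R_{(p-1)}$, whose image meets $F_q$ exactly when $q\ge p$, matching the source, while in degree $n+1$ it is an identity map. An element $[(a,0)]$ lies in $F_pA'$ precisely when $(a,0)=(a_0,b_0)+(g(z),-\phi(z))$ with $a_0\in F_pA$ and $b_0\in F_p\B_{r+1}(p,n)$; then $\phi(z)=b_0\in F_p\B_{r+1}(p,n)$ forces $z\in F_p\Z_{r+1}(p,n)$ by reflection, hence $g(z)\in F_pA$ and $a=a_0+g(z)\in F_pA$. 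This is the technical heart of the argument.

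Next, I would compute $\ker\pi'$ on underlying modules. From the cokernel description, $\pi'(a')=0$ means $(a',0)$ lies in the relation submodule, i.e.\ $a'=f(a)$ with $\pi(a)=0$; hence $\ker\pi'=f(K)$ as a graded submodule of $A'$. Since $f$ is a chain map and $K=\ker\pi$ is a subcomplex of $A$, the restriction $f|_K\colon K\to A'$ is a monomorphism onto $\ker\pi'$. Finally, combining with the reflection property gives $F_p\ker\pi'=\ker\pi'\cap F_pA'=f(K)\cap F_pA'=f(K\cap F_pA)=f(F_pK)$, so $f$ identifies $\ker\pi'$ with $K$ as filtered chain complexes.

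The main obstacle is that $\fCh$ is not abelian, so one cannot simply invoke the abelian-category fact that a pushout of an epimorphism has the ``same'' kernel; the filtration must be tracked by hand. The crux is the reflection property of $\phi$, and once that is established the remaining cokernel and intersection computations are routine.
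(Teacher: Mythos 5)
Your proof is correct, and it reaches the same conclusion by a more self-contained route than the paper, whose entire proof of \cref{KernelOfPushout} is a citation: it invokes the explicit description of the pushout along $\phi_{r+1}$ from \cite[Lemma 4.1.7]{BMonoidal} together with the fact that limits in $\fCh$ are computed filtration and cohomological degreewise \cite[Remark 2.6]{CELW}. You instead present both pushouts as cokernels (legitimate, since colimits in $\fCh$ are colimits of underlying complexes equipped with the image filtration, which is exactly the characterisation of $F_qA'$ you use) and compute $\ker\pi'=f(K)$ by hand. The genuinely nontrivial step in your argument, the \emph{reflection} property of $f$, is precisely the statement that $f$ is a strict monomorphism; the paper's framework already records this for all cofibrations of $\fChS$ in \cref{filteredChainsCofibrationsFacts} (via \cite[Lemmas 4.2.2 and 4.2.3]{BMonoidal}), and since $f$ is a pushout of the generating cofibration $\phi_{r+1}$ you could have cited that and skipped the inspection of the diagonal. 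What your route buys is independence from the external reference: checking directly that $\phi_{r+1}$ reflects the filtration (diagonal in degree $n$, identity in degree $n+1$, zero source in degree $n-1$) and feeding this into the image-filtration description of $A'$ gives strictness of $f$, after which $\ker\pi'=f(K)$ and $F_q\ker\pi'=f(F_qK)$ follow as you say, using that kernels carry the degreewise-induced filtration. Two small points of hygiene: your filtration index collides with the bidegree parameter $p$ of $\Z_{r+1}(p,n)$, so use a fresh letter $q$ throughout; and the injectivity of $f$ on underlying modules deserves its one-line justification (pushouts of monomorphisms of graded $R$-modules are monomorphisms, or again the strictness reduction to $\phi$ that you sketch).
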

\begin{proof}
  This follows from the description of such a pushout given in \cite[Lemma 4.1.7]{BMonoidal} and that limits are computed filtration and cohomological degreewise, \cite[Remark 2.6]{CELW}.
\end{proof}
Recall the morphism $w_r\colon B_r^{\ast,\ast}(A)\rightarrow Z_r^{\ast,\ast}(A)$ of \cref{associatedSS}.
\begin{prop}\label{PushoutEr+1Injective}
  For $\pi$ an $r$-weak equivalence which is $Z_r$-bidegreewise surjective, the morphism $E_{r+1}(\pi')\colon E_{r+1}(A')\rightarrow E_{r+1}(B')$ between the $(r+1)$-pages of the associated spectral sequences is injective.
\end{prop}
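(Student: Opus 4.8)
The plan is to exploit the description of the kernel of $\pi'$ together with a right-lifting property to reduce the statement to the $r$-acyclicity of that kernel. By \cref{KernelOfPushout} we have $\ker\pi'=K\coloneqq\ker\pi$, and throughout I would use the representing-object description $E_{r+1}^{p,p+n}(-)=\Hom(\Z_{r+1}(p,n),-)/\phi_{r+1}^\ast\Hom(\B_{r+1}(p,n),-)$ of \cref{associatedSS}. A class in $E_{r+1}^{p,p+n}(A')$ killed by $\pi'$ is represented by a map $a\colon\Z_{r+1}(p,n)\to A'$ with $\pi'\circ a=b\circ\phi_{r+1}$ for some $b\colon\B_{r+1}(p,n)\to B'$. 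Since $\B_{r+1}(p,n)$ is a direct sum of two representing $r$-cycles, the map $b$ amounts to a pair of $r$-cycles of $B'$; using that $\pi'$ is $Z_r$-bidegreewise surjective (the first part of \cref{PushoutSurjectivity}), I would lift $b$ to $\tilde b\colon\B_{r+1}(p,n)\to A'$ with $\pi'\circ\tilde b=b$. Then $a_0\coloneqq a-\tilde b\circ\phi_{r+1}$ satisfies $\pi'\circ a_0=0$, so it factors through $K$, and $[a_0]=[a]$ in $E_{r+1}(A')$. This reduces the claim to showing $E_{r+1}(K)=0$: for then $a_0$, viewed as a map into $K$, equals $\tilde c\circ\phi_{r+1}$ for some $\tilde c\colon\B_{r+1}(p,n)\to K$, whence $[a]=[a_0]=0$ already in $E_{r+1}(A')$.

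Next I would prove that $K$ is $r$-acyclic. The key point is that $\pi$, being a $Z_r$-bidegreewise surjective $r$-weak equivalence, is precisely an acyclic fibration of the model category $\left(\fCh\right)_{\{r\}}$ (there fibrations are the $Z_r$-surjections and weak equivalences are the $r$-weak equivalences), and hence is injective with respect to its generating cofibrations; in particular $\pi$ has the right lifting property against $\phi_{r+1}\colon\Z_{r+1}(p,n)\to\B_{r+1}(p,n)$. Given any $k\colon\Z_{r+1}(p,n)\to K$, compose with the inclusion $K\hookrightarrow A$ to obtain $k\colon\Z_{r+1}(p,n)\to A$ with $\pi\circ k=0$; the square whose lower edge is the zero map $\B_{r+1}(p,n)\to B$ then commutes, and a lift $\ell\colon\B_{r+1}(p,n)\to A$ satisfies $\ell\circ\phi_{r+1}=k$ and $\pi\circ\ell=0$. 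Thus $\ell$ factors through $K$ and exhibits the class of $k$ in $E_{r+1}(K)$ as zero. Hence $K$ is $r$-acyclic, and by the reduction above $E_{r+1}(\pi')$ is injective.

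The main obstacle is this second step. It is tempting to argue only from $\pi$ being an $r$-weak equivalence, i.e.\ that $E_{r+1}(\pi)$ is injective; but this merely places a kernel cycle $k$ in $B_{r+1}(A)$, and a direct attempt to correct the witnessing $r$-cycles into $K$ using $Z_r$-surjectivity alone stalls, because the correction terms need not drop in filtration and one is returned to an $(r+1)$-cycle of $K$ in the same bidegree. What genuinely does the work is the lifting property of the acyclic fibration $\pi$ against $\phi_{r+1}$, which supplies a boundary witness for $k$ lying \emph{entirely} inside $K$. I would finish by checking the routine bookkeeping that $Z_{r+1}^{p,p+n}(K)=Z_{r+1}^{p,p+n}(A')\cap K$ (so that cycles of $A'$ landing in $K$ really are cycles of $K$) and that $B_{r+1}(K)$ maps into $B_{r+1}(A')$; both follow from the filtration on $K$ being induced from $A'$ and from limits being computed filtration- and cohomological-degreewise, as used in \cref{KernelOfPushout}.
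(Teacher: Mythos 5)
Your proof is correct, and its overall skeleton is the same as the paper's: take a class killed by $\pi'$, use part 1 of \cref{PushoutSurjectivity} to lift the two constituent $r$-cycles of the boundary witness through the $Z_r$-bidegreewise surjective map $\pi'$ (your $\tilde b$ is the paper's pair $(e_0,e_1)$), correct the cycle by this lifted boundary so that it lands in $\ker\pi'=\ker\pi$ via \cref{KernelOfPushout}, and conclude from $r$-acyclicity of the kernel. The one genuine point of divergence is the kernel step: the paper simply asserts that the kernel of an $r$-acyclic fibration is $r$-acyclic, whereas you prove it, observing that a $Z_r$-bidegreewise surjective $r$-weak equivalence is precisely an acyclic fibration of $\left(\fCh\right)_{\{r\}}$ and therefore has the right lifting property against the generating cofibration $\phi_{r+1}\colon\Z_{r+1}(p,n)\rightarrow\B_{r+1}(p,n)$; a lift in the square with zero lower edge produces a boundary witness for any $(r+1)$-cycle of $K$ lying entirely inside $K$. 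This is valid and non-circular: the model structures of \cref{filteredChainsSModelCategories} are established prior to and independently of the properness results this proposition feeds into, and the identification of acyclic fibrations with $I$-injectives is part of the definition of a cofibrantly generated model category. Your version buys a self-contained justification of the kernel's acyclicity (and, as you note, witnesses internal to $K$, making the final push-forward to $B_{r+1}^{\ast,\ast}(A')$ transparent), at the cost of invoking model-categorical machinery where the paper's element-level argument stays within the spectral-sequence formalism of \cref{associatedSS}; your closing remark that $E_{r+1}(\pi)$-injectivity alone does not suffice, because the correction terms need not drop in filtration, correctly identifies why the lifting property (or an equivalent acyclicity-of-kernel statement) is the load-bearing input.
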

\begin{proof}
  Let $z$ be an $(r+1)$-cycle representing a class of $E_{r+1}^{\ast,\ast}(A')$ whose image under $E_{r+1}(\pi')$ is $0$ so that there's a boundary $(c_0,c_1)\in B_{r+1}^{\ast,\ast}(B')$ with $\pi'(z)=w_{r+1}((c_0,c_1))$, where $c_0$ and $c_1$ are the constituent $r$-cycles of the boundary element.
  Since $\pi$ is an $r$-acyclic fibration $\pi'$ is $Z_r$-bidegreewise surjective by \cref{PushoutSurjectivity}.
  Let $e_0$ and $e_1$ be $r$-cycle lifts of $c_0$ and $c_1$.

  Then $z-w_{r+1}((e_0,e_1))$ is an $(r+1)$-cycle of $A'$ in the kernel of $\pi'$.
  By \cref{KernelOfPushout} the kernel of $\pi'$ is the kernel of $\pi$ which is acyclic since $\pi$ is an $r$-acyclic fibration, hence $z-w_{r+1}((e_0,e_1))$ is also an $(r+1)$-boundary, say equal to $(k_0,k_1)$ in $B_{r+1}^{\ast,\ast}(A')$.

  Hence $z=w_{r+1}((e_0+k_0,e_1+k_1))$ is an $(r+1)$-boundary which shows injectivity of $E_{r+1}(\pi')$.
\end{proof}
\begin{coro}\label{pPrimefChSIsAcyclicFibration}
  Let $\pi$ be an $S$-acyclic fibration of $\fChS$.
  The pushout of $\pi$ along an $S$-generating cofibration is an $S$-acyclic fibration.\qed
\end{coro}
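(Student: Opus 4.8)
The plan is to verify the hypothesis of \cref{LacksProposition} for $\mc{M}=\fChS$ by checking, for each generating cofibration in $I_S$, that the map $\pi'$ arising in the associated double pushout is again an $S$-acyclic fibration; that is, that it is $Z_s$-bidegreewise surjective for every $s\in S$ and induces an isomorphism on $(r+1)$-pages. I would split this according to the two families making up $I_S=I_r\cup\bigcup_{s\in S}J_s$.

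First I would dispatch the generating acyclic cofibrations $0\to\Z_s(p,n)$ with $s\in S$. Here the first pushout gives $A'\cong A\oplus\Z_s(p,n)$ with $f$ the summand inclusion, and computing the second pushout identifies $B'\cong B\oplus\Z_s(p,n)$ and $\pi'\cong\pi\oplus\id_{\Z_s(p,n)}$. Since the functors $Z_s$ and $E_{r+1}$ preserve finite direct sums, $\pi'$ inherits $Z_s$-bidegreewise surjectivity for all $s\in S$ and the $(r+1)$-page isomorphism directly from $\pi$, so it is an $S$-acyclic fibration.

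The substantive case is the generating cofibration $\phi=\phi_{r+1}\colon\Z_{r+1}(p,n)\to\B_{r+1}(p,n)$, whose double pushout is precisely Diagram~\ref{fChSPropernessDiagram}. As $\pi$ is a fibration and $r=\max S\in S$, it is $Z_s$-bidegreewise surjective for every $s\in S$, in particular $Z_r$-surjective, and it is an $r$-weak equivalence. Applying \cref{PushoutSurjectivity}(1) for each $s\in S$ shows $\pi'$ is a fibration. For the weak equivalence I would show $E_{r+1}(\pi')$ is both injective and surjective: injectivity is exactly \cref{PushoutEr+1Injective}, and surjectivity follows from \cref{PushoutSurjectivity}(2), which makes $\pi'$ $Z_{r+1}$-bidegreewise surjective, since every $(r+1)$-cycle of $B'$ then lifts to an $(r+1)$-cycle of $A'$ whose class maps onto the given one under the quotient description $E_{r+1}=Z_{r+1}/B_{r+1}$.

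Combining the two families verifies the hypothesis of \cref{LacksProposition}. The only point not obtained by directly quoting the preceding propositions is the surjectivity of $E_{r+1}(\pi')$, where one upgrades $Z_{r+1}$-bidegreewise surjectivity to surjectivity on the quotient $E_{r+1}=Z_{r+1}/B_{r+1}$; I expect this to be the main (if minor) step, the fibrancy of $\pi'$, its injectivity on $(r+1)$-pages, and the direct-sum case all being immediate from the cited results.
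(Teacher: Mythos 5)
Your proposal is correct and follows essentially the same route as the paper: a case split over $I_S=I_r\cup\bigcup_{s\in S}J_s$, with the $J_s$ case handled by the direct-sum description of the pushout (the paper simply calls this case clear), fibrancy of $\pi'$ from part 1 of \cref{PushoutSurjectivity}, injectivity on $(r+1)$-pages from \cref{PushoutEr+1Injective}, and surjectivity from part 2 of \cref{PushoutSurjectivity}. The only cosmetic difference is that the paper upgrades $Z_{r+1}$-bidegreewise surjectivity to $E_{r+1}$-surjectivity by citing \cite[Lemma 2.8]{CELW}, whereas you prove this step inline via the quotient description $E_{r+1}=Z_{r+1}/B_{r+1}$, which is the same argument.
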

\begin{proof}
  If the generating cofibration defining $\pi'$ is of the form $0\rightarrow\Z_s(\ast,\ast)$ the result is clear.

  If the generating cofibration is of the form $\phi_{r+1}\colon\Z_{r+1}(\ast,\ast)\rightarrow\B_{r+1}(\ast,\ast)$ and $s\in S$ so that $\pi$ is $Z_s$-surjective then so too is $\pi'$ by part 1 of \cref{PushoutSurjectivity}.
  Since $\pi$ is $Z_r$-surjective and an $r$-weak equivalence the pushout $\pi'$ is $Z_{r+1}$-surjective by part 2 of \cref{PushoutSurjectivity} and so, by \cite[Lemma 2.8]{CELW}, $E_{r+1}(\pi')$ is bidegreewise surjective.
  Lastly since $\pi$ is an $r$-acyclic fibration $E_{r+1}(\pi')$ is bidegreewise injective by \cref{PushoutEr+1Injective}.
\end{proof}
\begin{prop}
  The model categories $\fChS$ of \cref{filteredChainsSModelCategories} are left proper.
\end{prop}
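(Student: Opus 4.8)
The plan is to apply Lack's criterion, \cref{LacksProposition}, to $\mc{M}=\fChS$. By \cref{filteredChainsSModelCategories} this model category is finitely cofibrantly generated, so the only thing left to check is the hypothesis of \cref{LacksProposition}: that in every double pushout of the shape in Diagram~\ref{LacksPropernessDiagram}, with $i$ a generating cofibration and $\pi$ an acyclic fibration, the map $\pi'$ is again an acyclic fibration.

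First I would note that in $\fChS$ an acyclic fibration is exactly an $S$-acyclic fibration, and that the map $\pi'$ in Diagram~\ref{LacksPropernessDiagram} is nothing but the pushout of $\pi$ along the pushout of the generating cofibration $i$. This is precisely the situation resolved by \cref{pPrimefChSIsAcyclicFibration}, so matching Lack's abstract Diagram~\ref{LacksPropernessDiagram} against the concrete Diagram~\ref{fChSPropernessDiagram} is the only bookkeeping required. Following the case split already made in the proof of that corollary: when $i$ is of the form $0\to\Z_s(p,n)$ the map $\pi'$ is just $\pi$ with an identity summand adjoined and the claim is transparent; when $i=\phi_{r+1}$ the claim combines the cycle-surjectivity of $\pi'$ from \cref{PushoutSurjectivity}, which both makes $\pi'$ a fibration and renders $E_{r+1}(\pi')$ bidegreewise surjective, with the injectivity of $E_{r+1}(\pi')$ from \cref{PushoutEr+1Injective}, the latter resting on the kernel computation of \cref{KernelOfPushout}.

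With both hypotheses in place, \cref{LacksProposition} yields that $\fChS$ is left proper. The genuine content is entirely absorbed into \cref{pPrimefChSIsAcyclicFibration}; once that corollary is in hand the present proposition is a direct invocation of Lack's result, so beyond correctly identifying the two diagrams I expect no further obstacle.
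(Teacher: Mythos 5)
Your proposal is correct and follows essentially the same route as the paper: both reduce the statement to \cref{LacksProposition}, citing \cref{filteredChainsSModelCategories} for finite cofibrant generation and \cref{pPrimefChSIsAcyclicFibration} for the double-pushout condition. Your recap of the case split inside \cref{pPrimefChSIsAcyclicFibration} (via \cref{PushoutSurjectivity}, \cref{PushoutEr+1Injective} and \cref{KernelOfPushout}) is accurate but redundant, as the paper simply invokes the corollary.
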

\begin{proof}
  By \cref{LacksProposition} it suffices to show that $\fChS$ is a finitely cofibrantly generated model category and that in the double pushout of Diagram \ref{LacksPropernessDiagram} with $\phi$ a generating cofibration and $\pi$ an acyclic fibration in $\fChS$ that $\pi'$ is also an acyclic fibration.
  The model structure is finitely cofibrantly generated by \cref{filteredChainsSModelCategories} and that $\pi'$ is an $S$-acyclic fibration was shown in \cref{pPrimefChSIsAcyclicFibration}.
\end{proof}
\subsection{Properness of $\bChS$}
Verifying the $\bChS$ are left proper is similar to the $\fChS$.
One first identifies the pushout $A'$ of $A$ along a generating $S$-cofibration, identifies the $s$-cycles of $A'$ and checks the pushout $\pi'$ satisfies similar surjectivity conditions as in \cref{PushoutSurjectivity}.
After also checking the bicomplex analogue of \cref{PushoutEr+1Injective} one can check that the pushout of an $S$-acyclic fibration along an $S$-generating cofibration is still an $S$-acyclic fibration and finally appeal to the same theorem of Lack.

We state the analogous lemmas without proof here (proofs can be found in \cite[\S 3.7.2]{BThesis}.
Again we write $A'$ and $B'$ as in the pushout:
\begin{equation}\label{bChSPropernessDiagram}
  \begin{tikzcd}
    \ZW_{r+1}(p,p+n)\arrow[r]\arrow[d,"\phi"']
    \arrow[dr,phantom,very near end, "\ulcorner"]
    &A\arrow[d,"f"]\arrow[r,two heads, "\pi", "\sim"']
    \arrow[dr,phantom,very near end, "\ulcorner"]
    &B\arrow[d]\\
    \BW_{r+1}(p,p+n-1)\arrow[r]&A'\arrow[r,"\pi'"]&B'
  \end{tikzcd}
\end{equation}
in the category of bicomplexes.

\begin{lemm}
  If $\pi$ is $ZW_s$-bidegreewise surjective for some $s\leq r+1$ then so too is $\pi'$.\qed
\end{lemm}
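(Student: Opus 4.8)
The plan is to mirror the proof of the filtered-chain analogue \cref{PushoutSurjectivity}(1), but to organise the bookkeeping of \cite[\S 3.7.2]{BThesis} through a short exact sequence and a diagram chase so as to isolate the one genuinely computational point. First I would recast the two pushout squares of Diagram \ref{bChSPropernessDiagram} as a map of short exact sequences. The generating cofibration $\phi\colon\ZW_{r+1}(p,p+n)\rightarrow\BW_{r+1}(p,p+n-1)$ is a monomorphism of bicomplexes, and since bicomplexes over $R$ form an abelian category its pushout $f\colon A\rightarrow A'$ is again a monomorphism with cokernel $Q\coloneqq\operatorname{coker}\phi$, independent of the attaching map. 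Because the outer rectangle of Diagram \ref{bChSPropernessDiagram} is a composite of two pushout squares, hence itself a pushout of $\phi$ (along $\pi$ precomposed with the top attaching map), the map $B\rightarrow B'$ has the same cokernel $Q$. Thus I obtain a map of short exact sequences
\begin{equation*}
  \begin{tikzcd}
    0\arrow[r]&A\arrow[r]\arrow[d,"\pi"]&A'\arrow[r]\arrow[d,"\pi'"]&Q\arrow[r]\arrow[d,"\id"]&0\\
    0\arrow[r]&B\arrow[r]&B'\arrow[r]&Q\arrow[r]&0
  \end{tikzcd}
\end{equation*}
in which the rightmost vertical arrow is the identity on $Q$.

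Next I would apply the witness-cycle functor $ZW_s$. For $s\geq1$ this is the kernel of a map of finite direct sums, hence a finite limit and in particular left exact, so the two rows yield left-exact sequences sharing the quotient term $ZW_s(Q)$ with an identity map between them (the case $s=0$ is immediate, since $ZW_0$ is an evaluation). A routine chase then shows that $ZW_s(\pi')$ is surjective provided that (a) $ZW_s(\pi)$ is surjective, which is the hypothesis, and (b) the map $ZW_s(A')\rightarrow ZW_s(Q)$ is surjective: given $b'\in ZW_s(B')$ with image $\bar b\in ZW_s(Q)$, lift $\bar b$ to some $a'\in ZW_s(A')$ using (b), observe that $b'-\pi'(a')$ then lies in $ZW_s(B)$ by left-exactness of the bottom row, lift it through $ZW_s(\pi)$ using (a), and add the two lifts.

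The remaining point (b) is the technical heart and where I expect the real work to lie. Here I would use the explicit decomposition
\[
  \BW_{r+1}(p,p+n-1)\cong\ZW_r(p+r,p+r+n-1)\oplus\ZW_0(p,p+n-1)\oplus\ZW_r(p-1,p-1+n)
\]
together with the fact that $\phi$ is the diagonal or identity wherever possible and zero otherwise to read off $Q=\operatorname{coker}\phi$ and, crucially, that its new generators carry identity differentials. The obstacle is that the splitting $A'\cong A\oplus Q$ holds only at the level of bigraded $R$-modules and not of bicomplexes, so a witness cycle of $Q$ lifted naively need not satisfy the defining relations $d_0a_0=0$ and $d_0a_k=d_1a_{k-1}$; the lift must be corrected using the (identity) differentials on the new cells. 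I would carry this out bidegreewise, exploiting that $Q$ is supported in a finite band of bidegrees, to produce the required witness-cycle lift for each $s\leq r+1$. This is precisely the combinatorial bookkeeping performed in \cite[\S 3.7.2]{BThesis}, and once it is in place left properness of $\bChS$ follows from \cref{LacksProposition} exactly as in the filtered-chain case.
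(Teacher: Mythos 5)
Your reduction to the short exact sequence $0\to A\to A'\to Q\to 0$ (and the same for $B\to B'$) is sound: $\phi$ is a bidegreewise split monomorphism, $\bCh$ is abelian, $ZW_s=\Hom_{\bCh}(\ZW_s(p,p+n),-)$ is left exact, and the four-lemma-style chase is valid \emph{granted} your condition (b). But (b) is false, and note that it is a statement about $A'$ and $Q$ alone, independent of $\pi$, so no hypothesis on $\pi$ can rescue it as stated. Take $r=0$, $s=1$, and push out $\phi_1\colon\ZW_1(p,p+n)\to\BW_1(p,p+n-1)=\ZW_0(p,p+n-1)$ along the identity attaching map, so $A=\ZW_1(p,p+n)$ and $A'=\ZW_0(p,p+n-1)$ with generator $x$ in bidegree $(p,p+n-1)$. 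Here $Q\cong\ZW_1(p,p+n-1)$, so $ZW_1^{p,p+n-1}(Q)\cong R$, generated by the class of $x$; but $ZW_1^{p,p+n-1}(A')=\{\lambda x\mid\lambda d_0x=0\}=0$ since $d_0x$ is a free generator of $A'$, so $ZW_1(A')\to ZW_1(Q)$ is the zero map. The same happens for every $r$: pushing $\phi_{r+1}$ out along $\id_{\ZW_{r+1}(p,p+n)}$ gives $A'=\BW_{r+1}(p,p+n-1)$ and $Q\cong\ZW_{r+1}(p+r,p+r+n-1)$, and a direct check on the three summands of $\BW_{r+1}(p,p+n-1)$ shows $ZW_{r+1}^{p+r,p+r+n-1}$ of it vanishes, while $ZW_{r+1}^{p+r,p+r+n-1}(Q)\cong R$ on the fundamental witness cycle. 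Your heuristic that lifts can be corrected using identity differentials on the new cells fails precisely here: in $A'$ the new generator $x$ satisfies $d_0x=z_0$ modulo new cells, where $(z_0,\ldots,z_r)$ is the attached witness $(r+1)$-cycle of $A$, and $z_0$ need not be a boundary of any kind, so the witness-cycle relations for a lift of the fundamental cycle of $Q$ are obstructed by an honest class in $A$.

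What the chase actually needs is only the containment $\im\bigl(ZW_s(B')\to ZW_s(Q)\bigr)\subseteq\im\bigl(ZW_s(A')\to ZW_s(Q)\bigr)$, and unlike (b) this genuinely depends on $\pi$: writing $A'\cong A\oplus Q$ bidegreewise with a twisted differential whose twisting is built from $(z_0,\ldots,z_r)$ (and $\pi z_j$ for $B'$), a class of $ZW_s(Q)$ comes from $ZW_s(B')$ exactly when an inhomogeneous system of witness-cycle equations is solvable in $B$, and one must transfer that solvability to $A$ using the $ZW_s$-surjectivity of $\pi$; once one $A$-solution exists, correcting it by a $ZW_s(\pi)$-lift of the homogeneous difference recovers your chase. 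So the hypothesis on $\pi$ has to enter in constructing the lift of the $Q$-class itself, not merely afterwards, and that transfer is the actual content of the computation in \cite[\S 3.7.2]{BThesis}, which the paper cites in place of a proof. The filtered analogue \cref{PushoutSurjectivity} is a warning sign you should have heeded: there, surjectivity on the newly created $(r+1)$-cycles requires the additional hypothesis that $\pi$ is a weak equivalence, exactly because cycles introduced by the pushout do not lift for free. As organised, your argument reduces the lemma to a false statement, so it has a genuine gap at what you correctly identified as its technical heart.
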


\begin{lemm}
  The kernel of the pushout $\pi'$ is $K=\ker\left(\pi\right)$.\qed
\end{lemm}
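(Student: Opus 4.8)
The plan is to mirror the proof of the filtered analogue, \cref{KernelOfPushout}, exploiting the abelian structure of $\bCh$. First I would recall that $\bCh$ is an abelian category and that, exactly as for filtered chain complexes, all limits and colimits are created by the forgetful functor to bigraded $R$-modules, so that kernels, cokernels and pushouts of bicomplexes are computed bidegreewise. In particular the right-hand square of Diagram \ref{bChSPropernessDiagram} is a pushout of $\pi$ along $f$ computed bidegreewise, and I may use the usual cokernel presentation of a pushout in an abelian category.

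Concretely, I would write $B'\cong\operatorname{coker}\left(A\xrightarrow{(f,\,-\pi)}A'\oplus B\right)$, with $\pi'\colon A'\to B'$ the composite of the inclusion $a'\mapsto(a',0)$ with the quotient map. A direct element chase then identifies the kernel: an $a'\in A'$ lies in $\ker\pi'$ precisely when $(a',0)$ lies in the image of $(f,-\pi)$, that is, when $a'=f(a)$ for some $a$ with $\pi(a)=0$. Hence $\ker\pi'=f(\ker\pi)$ as a subobject of $A'$, and it remains only to check that $f$ carries $\ker\pi$ isomorphically onto $\ker\pi'$, for which it suffices that $f$ be a monomorphism.

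This last point is the step I expect to require the most care. The map $f$ is the pushout of the generating cofibration $\phi\colon\ZW_{r+1}(p,p+n)\to\BW_{r+1}(p,p+n-1)$ along the top horizontal map of Diagram \ref{bChSPropernessDiagram}, and $\phi$ is a bidegreewise split monomorphism: this can be read off its explicit description as the bidegreewise diagonal-or-identity map of \cite[Definition 4.9]{CELW} into the direct summands of $\BW_{r+1}$, by inspecting the representing objects shown in \cref{figu:ZWr} and noting that every component $R$ of $\ZW_{r+1}(p,p+n)$ maps isomorphically onto a component of one of the summands of $\BW_{r+1}(p,p+n-1)$. Since in an abelian category the pushout of a monomorphism along any morphism is again a monomorphism, $f$ is a monomorphism, and so $f$ restricts to an isomorphism $\ker\pi\xrightarrow{\sim}\ker\pi'$, giving the claim. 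The only genuinely computational piece is the verification that $\phi$ is a bidegreewise split mono, and I would dispatch it by the inspection just described rather than by any further general nonsense.
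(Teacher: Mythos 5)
Your proof is correct, but it takes a different route from the paper's. The paper (for the filtered analogue, \cref{KernelOfPushout}, and via \cite[\S 3.7.2]{BThesis} for the bicomplex case) leans on an explicit description of the pushout along the specific generating cofibrations, from \cite[Lemma 4.1.7]{BMonoidal}, together with the fact that (co)limits are computed degreewise; the kernel is then read off from the concrete form of $A'$. You instead argue abstractly: present $B'$ as $\operatorname{coker}\bigl(A\xrightarrow{(f,-\pi)}A'\oplus B\bigr)$, chase elements (legitimate, since colimits in $\bCh$ are created bidegreewise) to get $\ker\pi'=f(\ker\pi)$, and then invoke the abelian-category fact that pushouts of monomorphisms are monomorphisms to see $f$ carries $\ker\pi$ isomorphically onto $\ker\pi'$. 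This buys generality — your argument works for a pushout along any monomorphism, with no case analysis on the shape of $\phi$ — at the cost of the one concrete verification that $\phi$ is mono. Two small remarks on that verification. First, your phrase ``maps isomorphically onto a component of one of the summands'' is slightly loose: at the bidegrees $(p,p+n)$ and $(p-1,p+n-1)$ (the nodes $a_0$ and $a_1$ of $\ZW_{r+1}(p,p+n)$) the map $\phi$ is the diagonal into a rank-two component of $\BW_{r+1}(p,p+n-1)$, not an isomorphism onto a single component; this is still a bidegreewise split monomorphism, so the conclusion stands, but the inspection should be stated as ``diagonal or identity in every nonzero bidegree of the source.'' Second, the paper's lemma in \cref{cellularity} that cofibrations in $\bChS$ are bidegreewise split monomorphisms would hand you the injectivity of $f$ directly (as $f$ is a pushout of a generating cofibration, hence a cofibration); since that section follows the properness section in the paper's logical order, your self-contained inspection of \cref{figu:ZWr} is the safer choice, but it is worth knowing the overlap exists.
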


\begin{lemm}
  \sloppy For $\pi$ an $r$-acyclic fibration the induced morphism $E_{r+1}(\pi')\colon E_{r+1}(A')\rightarrow E_{r+1}(B')$ between the $(r+1)$-pages of the associated spectral sequences is injective.\qed
\end{lemm}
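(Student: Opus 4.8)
The plan is to mirror the diagram chase in the proof of \cref{PushoutEr+1Injective}, now carried out with witness cycles and witness boundaries in place of cycles and boundaries. First I would take an $(r+1)$-witness cycle $z\in ZW_{r+1}^{p,p+n}(A')$ whose class $[z]\in E_{r+1}^{p,p+n}(A')$ is sent to $0$ by $E_{r+1}(\pi')$. By the description of the $(r+1)$-page as $ZW_{r+1}/w_{r+1}(BW_{r+1})$, this says exactly that $\pi'(z)=w_{r+1}(b)$ for some witness boundary $b\in BW_{r+1}^{p,p+n-1}(B')$. The goal is to realise $z$ itself as an $(r+1)$-witness boundary of $A'$, so that $[z]=0$.

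The key step is to lift the constituents of $b$. For $r\geq 1$ the witness boundary decomposes as the direct sum
\begin{equation*}
  BW_{r+1}^{p,p+n-1}(B')=ZW_r^{p+r,p+r+n-1}(B')\oplus (B')^{p,p+n-1}\oplus ZW_r^{p-1,p-1+n}(B'),
\end{equation*}
so I would write $b=(c_0,a,c_1)$ with $c_0,c_1$ being $r$-witness cycles and $a$ a $0$-witness cycle (an element of $(B')^{p,p+n-1}=ZW_0^{p,p+n-1}(B')$). Since $\pi$ is an $r$-acyclic fibration of $\bChS$ with $\{0,r\}\subseteq S$, it is both $ZW_r$- and $ZW_0$-bidegreewise surjective, and hence so is $\pi'$ by the preceding surjectivity lemma. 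I would then choose $r$-witness-cycle lifts $e_0,e_1$ of $c_0,c_1$ and a $0$-witness-cycle lift $e_a$ of $a$, assemble $\hat b\coloneqq(e_0,e_a,e_1)\in BW_{r+1}^{p,p+n-1}(A')$, and use naturality of $w_{r+1}$ to conclude that $z-w_{r+1}(\hat b)$ is an $(r+1)$-witness cycle of $A'$ lying in the kernel of $\pi'$. (The case $r=0$ is identical but simpler: $BW_1$ has only the middle summand $(B')^{p,p+n-1}$, so one lifts a single $0$-witness cycle.)

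Finally I would invoke the preceding kernel lemma, which identifies $\ker\pi'$ with $K\coloneqq\ker\pi$, together with the fact that $K$ is $r$-acyclic. The latter follows because $\pi$ is $ZW_0$-surjective, hence surjective on underlying modules, so $0\to K\to A\to B\to 0$ is a short exact sequence of bicomplexes inducing a long exact sequence of associated spectral sequences; as $E_{r+1}(\pi)$ is an isomorphism, $E_{r+1}(K)=0$. Acyclicity of $K$ then says every $(r+1)$-witness cycle of $K$ is an $(r+1)$-witness boundary, so $z-w_{r+1}(\hat b)=w_{r+1}(b')$ for some $b'=(k_0,k_a,k_1)\in BW_{r+1}^{p,p+n-1}(A')$. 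Therefore $z=w_{r+1}\big((e_0+k_0,\,e_a+k_a,\,e_1+k_1)\big)$ is a witness boundary, giving $[z]=0$ and injectivity of $E_{r+1}(\pi')$.

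The main obstacle, relative to the filtered case of \cref{PushoutEr+1Injective}, is the extra bookkeeping forced by the three-summand shape of $BW_{r+1}$: whereas the filtered boundary had only two $r$-cycle constituents, here there is an additional $0$-witness cycle in the middle summand that must be lifted separately. This is precisely the point where the hypothesis $0\in S$ is used, ensuring $\pi$ and hence $\pi'$ is $ZW_0$-surjective. The only other points needing care are the naturality identity for $w_{r+1}$ on witness boundaries and the $r$-acyclicity of the kernel; both are routine, and full details appear in \cite[\S 3.7.2]{BThesis}.
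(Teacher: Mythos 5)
Your proof is correct and is essentially the argument the paper intends: the paper states this lemma without proof, deferring to \cite[\S 3.7.2]{BThesis} and to the analogy with \cref{PushoutEr+1Injective}, and your chase transposes that filtered-case proof faithfully --- including the correct three-summand decomposition of $BW_{r+1}^{p,p+n-1}$, the separate lift of the middle $ZW_0$-summand (the genuinely new bookkeeping, using $0\in S$), the naturality of $w_{r+1}$, and the reduction to the kernel via the two preceding lemmas. One caution on a side justification: your parenthetical argument that $K=\ker\pi$ is $r$-acyclic via a ``long exact sequence of associated spectral sequences'' is not valid as stated, since a bidegreewise short exact sequence of bicomplexes induces a long exact sequence only on the early pages (the connecting map need not interact well with the higher differentials), so $E_{r+1}(\pi)$ being an isomorphism does not by itself force $E_{r+1}(K)=0$. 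The fact you actually need is the standard characterisation that a fibration which is an $r$-weak equivalence has $r$-acyclic kernel, proved by a cycle chase exploiting the $ZW_s$-surjectivity of $\pi$ for $s\in S$ rather than mere surjectivity on underlying modules; since this is precisely the fact the paper's filtered-case proof of \cref{PushoutEr+1Injective} invokes without further comment, the repair is cosmetic rather than structural, and the rest of your argument goes through unchanged.
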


\begin{coro}
  Let $\pi$ be an $S$-acyclic fibration of $\bChS$.
  The pushout of $\pi$ along an $S$-generating cofibration is an $S$-acyclic fibration.\qed
\end{coro}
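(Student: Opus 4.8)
The plan is to mirror the proof of \cref{pPrimefChSIsAcyclicFibration} for filtered chains, replacing cycles by witness cycles throughout and feeding on the three preceding lemmas (surjectivity, kernel, and $(r+1)$-page injectivity) together with the bicomplex analogue of \cite[Lemma 2.8]{CELW}. As there, I would split the argument according to which of the two families of $S$-generating cofibrations of \cref{bicomplexesSModelCategories} the pushout is taken along, and then assemble the conclusion via \cref{LacksProposition}.

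First I would dispose of the easy case, where the generating cofibration is one of the maps $0\rightarrow\ZW_s(\ast,\ast)$ or $0\rightarrow\ZW_0(\ast,\ast)$ with $s\in S$. Here the domain is initial, so $A'\cong A\oplus\ZW_s$, $B'\cong B\oplus\ZW_s$ and $\pi'\cong\pi\oplus\id$. Since the functors $ZW_t^{\ast,\ast}$ and $E_{r+1}^{\ast,\ast}$ commute with finite direct sums, $\pi'$ is again $ZW_t$-bidegreewise surjective for every $t\in S$ and induces an isomorphism on $(r+1)$-pages, so it is an $S$-acyclic fibration.

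The substantive case is the pushout along $\phi_{r+1}\colon\ZW_{r+1}(p,p+n)\rightarrow\BW_{r+1}(p,p+n-1)$ of Diagram \ref{bChSPropernessDiagram}. For the fibration conditions, each $s\in S$ satisfies $s\leq r<r+1$ and $\pi$ is $ZW_s$-bidegreewise surjective, so the preceding surjectivity lemma makes $\pi'$ likewise $ZW_s$-bidegreewise surjective; hence $\pi'$ is a fibration of $\bChS$. Injectivity of $E_{r+1}(\pi')$ is exactly the preceding injectivity lemma, using that $\pi$ is an $r$-acyclic fibration. It then remains only to see that $E_{r+1}(\pi')$ is bidegreewise surjective, whence it is an isomorphism, $\pi'$ is an $r$-weak equivalence, and combined with the fibration conditions $\pi'$ is an $S$-acyclic fibration.

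I expect this last surjectivity to be the main obstacle, and the point at which the bicomplex argument genuinely diverges from the filtered one. The fibration condition only guarantees surjectivity on $ZW_s$ for $s\in S\subseteq\{0,\ldots,r\}$, and an $S$-acyclic fibration need not itself be $ZW_{r+1}$-bidegreewise surjective: its right lifting property against $\phi_{r+1}$ yields only injectivity on the $(r+1)$-page and surjectivity onto witness boundaries, not onto all of $ZW_{r+1}$. So, exactly as in part (2) of \cref{PushoutSurjectivity} for filtered chains, the missing $(r+1)$-witness cycles must be produced from the attached cell: one shows that pushing out along $\phi_{r+1}$, together with $\pi$ being $ZW_r$-bidegreewise surjective and an $r$-weak equivalence, forces $\pi'$ to be $ZW_{r+1}$-bidegreewise surjective, after which the bicomplex analogue of \cite[Lemma 2.8]{CELW} upgrades this to surjectivity of $E_{r+1}(\pi')$. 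Carrying out this $ZW_{r+1}$-surjectivity of the pushout — the witness-cycle bookkeeping inside $\BW_{r+1}$, together with the kernel identification of the preceding lemma — is where the real work lies; the remaining assembly is then formal.
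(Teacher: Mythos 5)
Your overall assembly is the right shape and matches the paper's intended argument: split according to the two families of generating cofibrations of \cref{bicomplexesSModelCategories}, dispose of the pushouts along $0\rightarrow\ZW_s(\ast,\ast)$ by the direct-sum observation, get the fibration conditions for $\pi'$ from the first of the three stated lemmas, and get injectivity of $E_{r+1}(\pi')$ from the third. However, your diagnosis of the ``main obstacle'' rests on a false claim: an $S$-acyclic fibration of $\bChS$ \emph{is} $ZW_{r+1}$-bidegreewise surjective, so no analogue of part 2 of \cref{PushoutSurjectivity} is needed. Indeed, given $b\in ZW_{r+1}^{p,p+n}(B)$, surjectivity of $E_{r+1}(\pi)$ provides $a\in ZW_{r+1}^{p,p+n}(A)$ with $\pi(a)-b=w_{r+1}(c)$ for some $c\in BW_{r+1}^{p,p+n-1}(B)$; since $BW_{r+1}^{p,p+n-1}(B)=ZW_{r}^{p+r,p+r+n-1}(B)\oplus B^{p,p+n-1}\oplus ZW_{r}^{p-1,p-1+n}(B)$ is a genuine direct sum, the outer summands lift along $\pi$ by $ZW_r$-surjectivity ($r\in S$) and the middle summand by $ZW_0$-surjectivity ($0\in S$, part of being a fibration of $\bChS$); lifting $c$ to $\tilde{c}\in BW_{r+1}^{p,p+n-1}(A)$, the element $a-w_{r+1}(\tilde{c})\in ZW_{r+1}^{p,p+n}(A)$ maps to $b$. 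This is exactly why the paper's surjectivity lemma for bicomplexes is stated for $s\leq r+1$, in contrast with part 1 of \cref{PushoutSurjectivity} which stops at $s\leq r$: the intended proof applies that lemma at $s=r+1$ to transfer $ZW_{r+1}$-surjectivity from $\pi$ to $\pi'$, after which the bicomplex analogue of \cite[Lemma 2.8]{CELW} gives surjectivity of $E_{r+1}(\pi')$. You invoked the lemma only for $s\in S\subseteq\{0,\ldots,r\}$ and asserted that acyclic-fibrancy yields ``only injectivity on the $(r+1)$-page and surjectivity onto witness boundaries, not onto all of $ZW_{r+1}$'' --- but $(r+1)$-page surjectivity together with liftability of witness boundaries is precisely $ZW_{r+1}$-surjectivity, as the computation above shows.

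To be fair, the intermediate statement you propose to prove instead (that the pushout along $\phi_{r+1}$ of a $ZW_r$- and $ZW_0$-surjective $r$-weak equivalence is $ZW_{r+1}$-surjective) is true --- trivially so, given the above, since $\pi$ itself is already $ZW_{r+1}$-surjective --- so executing your plan would not produce a false result. But the ``witness-cycle bookkeeping inside $\BW_{r+1}$'' that you flag as where the real work lies is unnecessary, and your proposal leaves exactly that step unexecuted while motivating it by the incorrect claim about acyclic fibrations. Note also that this is the point where the bicomplex case is genuinely \emph{easier} than the filtered one, because the witness boundaries form a direct sum of representable pieces each covered by the fibration conditions, whereas the filtered boundaries $B_{r+1}=dZ_r+Z_{r-1}$ do not decompose this way; your expectation that the bicomplex case ``genuinely diverges'' in the direction of more work has it backwards. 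With that step replaced by the lemma at $s=r+1$, the rest of your argument goes through as the paper (and its source, \cite[\S 3.7.2]{BThesis}) intends.
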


\begin{prop}
  The model categories $\bChS$ of \cref{bicomplexesSModelCategories} are left proper.\qed
\end{prop}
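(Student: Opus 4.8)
The plan is to apply Lack's criterion, \cref{LacksProposition}, exactly as it was used above for $\fChS$. By that proposition it suffices to check two things: that $\bChS$ is finitely cofibrantly generated, and that in any double pushout diagram of the shape of Diagram \ref{bChSPropernessDiagram}, with $\phi$ a generating cofibration and $\pi$ an $S$-acyclic fibration, the induced map $\pi'$ is again an $S$-acyclic fibration. These are the only hypotheses of \cref{LacksProposition}, so establishing them finishes the argument.

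The first point is immediate, since \cref{bicomplexesSModelCategories} already records that $\bChS$ is finitely cofibrantly generated. For the second point I would simply invoke the corollary stated just above, namely that the pushout of an $S$-acyclic fibration along an $S$-generating cofibration is an $S$-acyclic fibration. That corollary is in turn assembled from the three bicomplex lemmas preceding it: that $\pi'$ inherits $ZW_s$-bidegreewise surjectivity from $\pi$, that $\ker\pi'=\ker\pi$ and is therefore $r$-acyclic, and that $E_{r+1}(\pi')$ is injective. Together these force $\pi'$ to be both a fibration of $\bChS$ and an $r$-weak equivalence, hence an $S$-acyclic fibration, and so both hypotheses of \cref{LacksProposition} hold and the proposition follows formally.

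The genuine work, and hence the main obstacle, lies entirely in those three bicomplex lemmas rather than in the final assembly. Each requires an explicit description of the pushout $A'$ of $A$ along the generating cofibration $\phi\colon\ZW_{r+1}(p,p+n)\to\BW_{r+1}(p,p+n-1)$, together with a computation of its witness cycles $ZW_s^{\ast,\ast}(A')$ and of the induced boundary map $w_{r+1}$ on the $(r+1)$-page. In particular the injectivity statement is expected to reproduce, with witness cycles in place of ordinary cycles, the diagram chase of \cref{PushoutEr+1Injective}: given an $(r+1)$-witness cycle of $A'$ mapping to a witness boundary of $B'$, one lifts the constituent $r$-witness cycles of that boundary through the $ZW_r$-surjective map $\pi'$ and then uses acyclicity of $\ker\pi'=\ker\pi$ to exhibit the original class as a witness boundary of $A'$. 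These are the delicate verifications, and they are carried out in \cite[\S 3.7.2]{BThesis}; once they are in place the present proposition is immediate.
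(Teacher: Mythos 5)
Your proposal is correct and follows the paper's own route exactly: the paper likewise reduces left properness of $\bChS$ to \cref{LacksProposition} via the corollary that pushouts of $S$-acyclic fibrations along $S$-generating cofibrations are $S$-acyclic fibrations, itself resting on the three bicomplex lemmas (witness-cycle surjectivity, $\ker\pi'=\ker\pi$, and injectivity of $E_{r+1}(\pi')$), with the detailed verifications deferred to \cite[\S 3.7.2]{BThesis}. You have also correctly identified where the genuine work lies, namely in the witness-cycle analogue of the diagram chase of \cref{PushoutEr+1Injective}.
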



\section{Cellularity}\label{cellularity}
We show the $\fChS$ and $\bChS$ are cellular, an assumption needed for the Cellularization Principle.
Recall the definition of a cellular model category from \cref{modelCategoryProperties}.
We recall the notion of a regular morphism which in the category of filtered chain complexes coincide with the effective morphisms.
It is easier to show a morphism is a regular morphism however.
\begin{defi}
  A monomorphism $i\colon A\rightarrow B$ is a \textit{regular morphism} if it is the equaliser of some pair of morphisms $f,g\colon B\rightarrow C$.
\end{defi}
\begin{prop}\label{effectiveEqualsRegular}
  In a category with equalisers and cokernel pairs the class of regular mono\-morphisms coincides with the class of effective monomorphisms.\qed
\end{prop}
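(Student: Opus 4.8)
The plan is to prove the two class inclusions separately. One direction is immediate from the definitions and the other, while standard, requires a short mutual-factorisation argument. Note first that both notions are automatically notions of monomorphism, since any equaliser is monic, so there is no issue about comparing them within the monomorphisms.

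For the easy inclusion, suppose $i\colon A\to B$ is an effective monomorphism. By definition it is then the equaliser of the two canonical inclusions $B\rightrightarrows B\coprod_A B$ of its cokernel pair; as this is a parallel pair of morphisms, $i$ is by definition a regular monomorphism.

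For the converse, suppose $i\colon A\to B$ is a regular monomorphism, say the equaliser of some pair $f,g\colon B\to C$. First I would form the cokernel pair of $i$ (the pushout of $i$ along itself, which exists by hypothesis), giving the two inclusions $j_1,j_2\colon B\to B\coprod_A B$ with $j_1 i=j_2 i$. Since $i$ equalises $f$ and $g$ we have $fi=gi$, so the universal property of the cokernel pair produces a unique $h\colon B\coprod_A B\to C$ with $hj_1=f$ and $hj_2=g$. Next I would take the equaliser $e\colon E\to B$ of $j_1,j_2$; this is exactly the morphism witnessing effectiveness, so it suffices to show that $i$ and $e$ present the same subobject of $B$.

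The key step, and the only point I expect to require care, is the identification of these two subobjects via mutually inverse factorisations. From $j_1 i=j_2 i$ the equaliser property of $e$ yields a unique $u\colon A\to E$ with $eu=i$. Conversely $j_1 e=j_2 e$ gives $fe=hj_1 e=hj_2 e=ge$, so the equaliser property of $i$ yields a unique $v\colon E\to A$ with $iv=e$. Then $e(uv)=iv=e$ forces $uv=\id_E$ since $e$ is monic, and $i(vu)=eu=i$ forces $vu=\id_A$ since $i$ is monic. Hence $u$ and $v$ are inverse isomorphisms, so $i=eu$ exhibits $i$ as the equaliser of its own cokernel pair $j_1,j_2$, that is, $i$ is an effective monomorphism. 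Everything here beyond this final subobject identification is a direct invocation of the relevant universal properties.
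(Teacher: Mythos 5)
Your proof is correct and is essentially the argument the paper invokes by citing the dual of \cite[Proposition 2.5.7]{Borceux}: the standard verification that a regular monomorphism is in fact the equaliser of its own cokernel pair, via the induced map $h$ out of the pushout. The only inessential difference is your detour through the equaliser $e$ of $j_1,j_2$ and the mutual-factorisation isomorphism $u,v$ --- one can check the universal property directly, since any $t$ with $j_1t=j_2t$ satisfies $ft=hj_1t=hj_2t=gt$ and hence factors uniquely through $i$ --- but your route is equally valid.
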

\begin{proof}
  The proof is dual to that of \cite[Proposition 2.5.7]{Borceux}.
\end{proof}
\subsection{Cellularity of $\fChS$}
\begin{defi}
  Let $f\colon A\rightarrow B$ be a morphism of $\fCh$.
  We say $f$ is a \textit{strict morphism} if whenever $f(a)\in F_pB^n$ then $a\in F_pA^n$.
\end{defi}
We say a morphism in $\fCh$ is an inclusion if it is after forgetting filtration.
The following was shown in \cite[Lemmas 4.2.2 and 4.2.3]{BMonoidal}.
Colimits of strict morphisms being computed degreewise can be easily checked by studying the definition of colimits in $\fCh$ given in \cite[Remark 2.6]{CELW}.
  
\begin{lemm}\label{filteredChainsCofibrationsFacts}
  Cofibrations of $(\fCh)_S$ are in particular inclusions and strict morphisms.
  Cokernels of strict morphisms are computed filtration degreewise and cohomological degreewise.\qed
\end{lemm}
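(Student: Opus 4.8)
The plan is to treat the two assertions in turn, using the explicit description of colimits in $\fCh$ from \cite[Remark 2.6]{CELW}: the underlying graded $R$-module of a colimit is the ordinary degreewise colimit of underlying modules, whereas the filtration $F_p$ of the colimit is the image of the canonical map from the colimit of the $F_p$-pieces. The crux of both parts is that this image filtration collapses dramatically in the presence of strictness, so the strategy throughout is to reduce filtered statements to purely module-theoretic ones in each bidegree.

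For the statement about cokernels, let $f\colon A\to B$ be strict and let $C$ be its cokernel, so that $C^n=B^n/\im f^n$ degreewise on underlying modules. By the colimit description the filtration is $F_pC^n=\im\bigl(F_pB^n\to C^n\bigr)=F_pB^n/\bigl(F_pB^n\cap\im f^n\bigr)$. First I would note the inclusion $f^n(F_pA^n)\subseteq F_pB^n\cap\im f^n$, which holds because $f$ preserves the filtration; the reverse inclusion is precisely strictness, for if $b=f^n(a)$ lies in $F_pB^n$ then $a\in F_pA^n$ and hence $b\in f^n(F_pA^n)$. Thus $F_pB^n\cap\im f^n=f^n(F_pA^n)$ and $F_pC^n=F_pB^n/f^n(F_pA^n)$, which is exactly the filtration-degreewise and cohomological-degreewise cokernel.

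For the first assertion I would use that every cofibration is a retract of a transfinite composite of pushouts of the generating cofibrations, so it suffices to show the generators are strict inclusions and that strict inclusions are closed under pushout, transfinite composition and retract. The generators $0\to\Z_s(p,n)$ are strict inclusions trivially, while $\phi_{r+1}\colon\Z_{r+1}(p,n)\to\B_{r+1}(p,n)$ is seen to be one by direct inspection of the representing objects: it is the diagonal in degree $n$ and an identity in degree $n+1$, both injective, and a short filtration computation shows $\phi_{r+1}(a)\in F_q$ if and only if $a\in F_q$. Closure under retract is formal. For closure under pushout and transfinite composition I would again invoke the colimit description of \cite[Remark 2.6]{CELW}: for a pushout of a strict inclusion along an arbitrary map, and for a transfinite composite of strict inclusions, strictness guarantees that the colimit of the $F_p$-pieces injects into the colimit, so the image filtration agrees with the naively degreewise one, whereupon injectivity and strictness of the resulting leg reduce to the corresponding statements for $R$-modules in each bidegree. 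I expect the verification that the image filtration on a pushout collapses to the degreewise pushout—and hence that strictness is inherited—to be the main obstacle, since it is the one place where the genuinely filtered, rather than purely module-theoretic, nature of $\fCh$ enters.
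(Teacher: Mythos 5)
Your proposal is correct and takes the route the paper itself points to: the paper gives no in-text proof, citing \cite[Lemmas 4.2.2 and 4.2.3]{BMonoidal} for the assertion that cofibrations are strict inclusions and remarking that the cokernel statement is checked directly from the colimit description of \cite[Remark 2.6]{CELW} --- which is precisely your computation $F_pC^n=F_pB^n/\bigl(F_pB^n\cap\im f^n\bigr)=F_pB^n/f\bigl(F_pA^n\bigr)$, where the second equality uses the paper's (strong) form of strictness. Your generators-plus-closure argument is the standard proof behind the cited lemmas, and the step you single out as the crux does go through: for a pushout of a strict inclusion $i\colon X\to Y$ along $g\colon X\to A$, a relation $[a]=[a'+y']$ with $a'\in F_pA^n$, $y'\in F_pY^n$ forces $a-a'=g(x)$ and $y'=i(x)$ for some $x\in X^n$, strictness of $i$ gives $x\in F_pX^n$, and hence $a=a'+g(x)\in F_pA^n$, so the image filtration on the pushout is the degreewise one and strictness is inherited.
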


\begin{prop}\label{filteredChainsAreCellular}
  The model categories $\fChS$ of \cref{filteredChainsSModelCategories} are cellular.
\end{prop}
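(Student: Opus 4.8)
**The plan is to verify the three conditions in the definition of a cellular model category (\cref{modelCategoryProperties}) for $\fChS$.** The first two conditions concern smallness: by \cref{smallFilteredChains} every object of $\fCh$ is small relative to the whole category, so in particular the domains and codomains of $I_S$ are small, and the domains of $J_S$ are small relative to $I_S$. This handles conditions (1) and (2) essentially for free, since the model structure is cofibrantly generated by \cref{filteredChainsSModelCategories}. The real content is condition (3): that every cofibration is an effective monomorphism.

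To prove condition (3), I would invoke \cref{effectiveEqualsRegular}, which tells me it suffices to show that every cofibration is a \emph{regular} monomorphism, i.e.\ arises as an equaliser of some pair of parallel maps. This reduction is the key simplification, since regular morphisms are far easier to exhibit than effective ones. The natural candidate pair is the two canonical inclusions $B \rightrightarrows B \coprod_A B$ into the cokernel pair (pushout of $i$ along itself), whose equaliser will recover $i$ precisely when $i$ is the inclusion of a suitable kernel-type subobject.

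The crucial input is \cref{filteredChainsCofibrationsFacts}, which records two facts: cofibrations of $\fChS$ are inclusions and strict morphisms, and cokernels of strict morphisms are computed filtration-degreewise and cohomological-degreewise. So given a cofibration $i\colon A\rightarrow B$, I would form the cokernel pair $B\coprod_A B$, observe that because $i$ is strict this pushout is computed degreewise (reducing the bicomplex-level statement to a statement about $R$-modules in each fixed $(p,n)$), and then check the equaliser of the two inclusions degreewise. In each degree we are asking whether the inclusion of the submodule $F_pA^n \hookrightarrow F_pB^n$ is the equaliser of the two maps into the degreewise pushout $F_pB^n \coprod_{F_pA^n} F_pB^n$; for inclusions of modules this is the standard fact that a submodule inclusion is regular (it is the equaliser of $b\mapsto (b,0)$ and $b\mapsto(0,b)$ modulo identification). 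Assembling these degreewise equalisers back up, using strictness to guarantee the filtration is respected, yields that $i$ is the equaliser of the two inclusions into its cokernel pair, hence a regular and therefore effective monomorphism.

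\textbf{The main obstacle} I anticipate is the bookkeeping in the last step: ensuring that the degreewise equaliser computation genuinely reassembles into an equaliser \emph{in $\fCh$}, which requires that the filtration on the equaliser agrees with the filtration on $A$. This is exactly where strictness of the cofibration $i$ is indispensable---without it, the preimage computation defining the equaliser filtration could pick up elements of $B$ mapping into a lower filtration stage than they ought to, and the equaliser would fail to be $A$ on the nose. Once strictness is used to force the degreewise computation to respect filtration (via \cref{filteredChainsCofibrationsFacts}), the remaining verifications are routine module-theoretic facts. I would therefore organise the proof as: (i) cite smallness for (1) and (2); (ii) reduce (3) to regularity via \cref{effectiveEqualsRegular}; (iii) exhibit $i$ as the equaliser of its cokernel-pair inclusions, doing the computation degreewise and invoking strictness throughout.
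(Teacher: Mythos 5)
Your proposal is correct and takes essentially the same route as the paper: smallness from \cref{smallFilteredChains}, reduction of effective to regular monomorphisms via \cref{effectiveEqualsRegular}, and the use of \cref{filteredChainsCofibrationsFacts} (cofibrations are strict inclusions, and cokernels of strict morphisms are computed filtration- and cohomological-degreewise). The only cosmetic difference is the choice of parallel pair: the paper exhibits a cofibration as the kernel of its cokernel, i.e.\ the equaliser of the zero morphism and the cokernel map, whereas you equalise the two cokernel-pair inclusions --- which actually verifies effectiveness directly and makes the regularity reduction redundant in your write-up, though the degreewise computation goes through in both cases thanks to strictness.
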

\begin{proof}
  We need to demonstrate the cellular conditions from \cref{modelCategoryProperties}.
  By \cref{smallFilteredChains} all filtered chain complexes are small hence the smallness conditions are satisfied.
  It remains to show all cofibrations are effective monomorphisms.
  By \cref{effectiveEqualsRegular} it suffices to show they are regular monomorphisms instead.
  Cofibrations of $\fChS$ are monomorphisms by \cref{filteredChainsCofibrationsFacts}.
  Since they are in addition strict morphisms their cokernels are computed filtration and cohomological degreewise, hence they are the kernel of their cokernel, and so any cofibration is the equaliser of the $0$ morphism and its cokernel, hence a regular monomorphism.
\end{proof}
\subsection{Cellularity of $\bChS$}
\begin{lemm}
  Cofibrations in $\bChS$ are (bidegreewise split) monomorphisms.
\end{lemm}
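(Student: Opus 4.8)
The plan is to show that every cofibration of $\bChS$ is bidegreewise a split monomorphism of $R$-modules by propagating this property from the generating cofibrations through the retract--pushout--transfinite-composition presentation of the cofibrations. The key structural fact I will use throughout is that colimits of bicomplexes are computed bidegreewise in the category of $R$-modules, so that every construction below may be analysed one bidegree at a time.

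First I would verify that each generating cofibration in $I_S=I_r\cup\bigcup_{s\in S}J_s$ is, in each bidegree, the inclusion of a direct summand with free complement. For the elements of $J_s$ the maps $0\to\ZW_s(p,p+n)$ and $0\to\ZW_0(p,p+n)$ have trivial domain, so this is immediate. For $\phi_{r+1}\colon\ZW_{r+1}(p,p+n)\to\BW_{r+1}(p,p+n-1)$, recall that $\phi_{r+1}$ is bidegreewise the diagonal, the identity, or $0$ between sums of copies of $R$; in every bidegree it is therefore a split injection of free $R$-modules with free cokernel. So I record the stronger bidegreewise property that $A^{i,j}\hookrightarrow B^{i,j}$ exhibits $A^{i,j}$ as a direct summand with free complement.

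I would then show this strengthened property is closed under the generating operations. Strengthening the inductive hypothesis from ``split monomorphism'' to ``bidegreewise inclusion of a direct summand'' is exactly what makes the transfinite-composition step go through, and I expect that step to be the main obstacle, since split monomorphisms are not in general closed under transfinite composition. For pushouts, the pushout of a summand inclusion $A\hookrightarrow A\oplus Q$ along any $f\colon A\to X$ is the summand inclusion $X\hookrightarrow X\oplus Q$, with the same free complement, and this is computed bidegreewise. For a transfinite composite $X_0\to X_1\to\cdots$ in which each stage $X_\beta\to X_{\beta+1}$ is bidegreewise $X_\beta^{i,j}\hookrightarrow X_\beta^{i,j}\oplus Q_\beta^{i,j}$, a transfinite induction (the limit-ordinal case using that a colimit of summand inclusions is the corresponding direct sum) gives $X_\lambda^{i,j}\cong X_0^{i,j}\oplus\bigoplus_{\beta<\lambda}Q_\beta^{i,j}$, so the total composite is again bidegreewise a summand inclusion, and in particular a split monomorphism.

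Finally, since every cofibration of $\bChS$ is a retract of a relative $I_S$-cell complex, and a retract of a bidegreewise split monomorphism is again one, I conclude that every cofibration is a bidegreewise split monomorphism.
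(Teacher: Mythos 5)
Your argument is correct, but it takes a genuinely different route from the paper. The paper's proof runs the lifting argument of \cite[Proposition 2.3.9]{H} for the projective model structure on chain complexes, with the disc object replaced by a witness $0$-cycle: given a cofibration $i\colon A\rightarrow B$, one builds the $\ZW_0$-type bicomplex on the module sitting in a given bidegree of $A$, notes that its map to $0$ is an acyclic fibration in $\bChS$ (such an object is $0$-acyclic, hence $r$-acyclic, and every map to $0$ is surjective on witness cycles), and reads off a bidegreewise retraction of $i$ from the lift in the resulting square --- a one-step argument handling an arbitrary cofibration with no reference to cell presentations. You instead run the retract--pushout--transfinite-composition induction over $I_S$-cell complexes, and you correctly identify and repair the one genuine pitfall of that route: split monomorphisms are not closed under transfinite composition, so you strengthen the inductive invariant to ``bidegreewise inclusion of a direct summand with specified free complement,'' which is stable under pushout (colimits of bicomplexes being computed bidegreewise) and accumulates as $X_\lambda^{i,j}\cong X_0^{i,j}\oplus\bigoplus_{\beta<\lambda}Q_\beta^{i,j}$ at limit ordinals, before passing to retracts at the end, where freeness of the complement is lost but splitness survives --- which is all the lemma asserts. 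The only point you assert rather than verify is that $\phi_{r+1}$ is bidegreewise injective, i.e.\ that the ``otherwise $0$'' clause in its definition occurs only in bidegrees where $\ZW_{r+1}(p,p+n)$ vanishes; this is true of the explicit staircase-shaped objects, but it deserves a sentence of checking. The trade-off: the paper's lifting argument is shorter and avoids any bookkeeping over cell attachments, while your induction yields slightly more along the way --- relative $I_S$-cell complexes are bidegreewise summand inclusions with free complement, and general cofibrations have bidegreewise projective cokernel --- which is the bicomplex analogue of the finer ``split injection with cofibrant cokernel'' statement in Hovey's result.
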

\begin{proof}
  The proof is similar to that for the projective model structure on chain complexes, see \cite[Proposition 2.3.9]{H}.
  One replaces the disc object in chain complexes with a witness $0$-cycle in bicomplexes.
\end{proof}
\begin{prop}\label{bicomplexesAreCellular}
  The model categories $\bChS$ of \cref{bicomplexesSModelCategories} are cellular.
\end{prop}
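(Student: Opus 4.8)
The plan is to mirror the proof of \cref{filteredChainsAreCellular} for the bicomplex case, verifying the three conditions of a cellular model category from \cref{modelCategoryProperties}. First I would dispense with the smallness conditions: by \cref{smallBicomplexes} every object of $\bCh$ is small relative to the whole category, so the domains and codomains of $I_S$ are small and the domains of $J_S$ are small relative to $I_S$. This handles conditions (1) and (2) immediately, exactly as in the filtered chain complex case.

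The substance of the proof lies in condition (3), that every cofibration is an effective monomorphism. By \cref{effectiveEqualsRegular} it suffices to exhibit each cofibration as a regular monomorphism, i.e.\ as an equaliser. By the preceding lemma, cofibrations in $\bChS$ are bidegreewise split monomorphisms. The key observation is that for a bidegreewise split monomorphism $i\colon A\rightarrow B$ of bicomplexes, the cokernel $B/A$ is computed bidegreewise, since colimits (and in particular cokernels) in $\bCh$ are computed bidegreewise as $\bCh$ is a category of diagrams of $R$-modules. Because $i$ is a bidegreewise split inclusion of $R$-modules in each bidegree, $A^{i,j}$ is precisely the kernel of the quotient map $B^{i,j}\rightarrow (B/A)^{i,j}$ in each bidegree, and these kernels assemble into a morphism of bicomplexes. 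Hence $A$ is the kernel of the cokernel projection $q\colon B\rightarrow B/A$, so $i$ is the equaliser of $q$ and the zero morphism $B\rightarrow B/A$. This realises $i$ as a regular monomorphism.

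Concretely, I would write the argument as follows: since $i$ is a bidegreewise split monomorphism its cokernel $q\colon B\rightarrow B/A$ is computed bidegreewise, and in each bidegree $i^{i,j}$ is the kernel of $q^{i,j}$; therefore $i$ is the equaliser of the parallel pair $q$ and $0$, hence a regular and thus effective monomorphism. I expect no genuine obstacle here, as this is the direct analogue of the filtered chain complex argument, with the role of strictness (which guaranteed degreewise cokernels in $\fCh$) now played by the bidegreewise splitness of cofibrations in $\bCh$. The only point requiring minor care is confirming that bidegreewise splitness really does force the cokernel to be computed bidegreewise and that the resulting kernel comparison is a morphism of bicomplexes rather than merely of bigraded modules; this follows because the splittings need not be compatible with the differentials, but the cokernel itself is a genuine bicomplex quotient and the kernel of a bicomplex morphism is again computed bidegreewise.
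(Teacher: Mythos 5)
Your proof is correct and follows essentially the same route as the paper: smallness via \cref{smallBicomplexes}, then reducing effective to regular monomorphisms via \cref{effectiveEqualsRegular} and exhibiting each cofibration as the kernel of its cokernel, i.e.\ the equaliser of the quotient map and $0$. The paper gets there a touch more economically by simply noting that $\bCh$ is an Abelian category, in which every monomorphism is the kernel of its cokernel --- so your appeal to bidegreewise splitness is superfluous (mere monomorphy suffices, and kernels and cokernels in $\bCh$ are computed bidegreewise regardless of any splitting), but this costs you nothing in correctness.
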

\begin{proof}
  We need to demonstrate the cellular conditions from \cref{modelCategoryProperties}.
  By \cref{smallBicomplexes} all bicomplexes are small hence the smallness conditions are satisfied.
  It remains to show all cofibrations are effective monomorphisms.
  By \cref{effectiveEqualsRegular} it suffices to show they are regular monomorphisms instead and since $\bCh$ is an Abelian category any monomorphism $i$ is the kernel of its cokernel, hence the equaliser of the $0$ morphism and its cokernel.
\end{proof}

\section{Stability}\label{stability}
Recall from \cref{loopsViaPullback} that the loops functor on an object $X$ can be computed by a regular pullback
\begin{equation*}
  \begin{tikzcd}
    Z\arrow[r,dashed]\arrow[d,dashed]\arrow[dr,very near start, "\lrcorner", phantom]&Y\arrow[d,"f", two heads]\\
    \ast\arrow[r]&X
  \end{tikzcd}
\end{equation*}
where $f$ is a fibration with $Y\simeq \ast$.
In both the model categories $\fChS$ and $\bChS$ we have good models for such a $Y$ given an $X$.
Recall there is in either category a morphism $\rLoops C_r(A)\rightarrow A$ which is bidegreewise surjective on $k$-(witness) cycles for all $0\leq k\leq r$ by \cref{loopFibrationsSurjective,loopFibrationSurjectiveBicomplexes} and with $\rLoops C_r(A)$ $r$-acyclic by \cref{rConeIsrAcyclic,rConeIsAcyclicBicomplexes}.
To compute $\Omega A$ we need only then compute the pullback of this fibration by a point.

In both $\fChS$ and $\bChS$ we'll show, on the level of the model category, that $\Omega A$ can be modelled by $\rLoops A$ which is invertible, hence that the suspension-loop adjunction is an equivalence.
For filtered chain complexes $\rLoops A$ will be exactly the pullback whereas for bicomplexes we identify the pullback as being $r$-quasi-isomorphic to $\rLoops A$.

We make use of stability in the construction of Quillen equivalences between $\fChS$ and $\bChS$.
\subsection{Stability of $\fChS$}
\begin{prop}
  The model categories $\fChS$ of \cref{filteredChainsSModelCategories} are stable model categories with $\Omega A\simeq \rLoops A$ and $\Sigma A\simeq \rSusp A$.
\end{prop}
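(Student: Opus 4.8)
The plan is to compute the homotopy loops functor explicitly by means of \cref{loopsViaPullback} and to recognise the result as the invertible functor $\rLoops$. The category $\fCh$ is additive, so its initial and terminal objects coincide in the zero object and $\fChS$ is pointed; it is right proper by \cref{filteredChainsSModelCategories}. Thus \cref{loopsViaPullback} applies, and for any fibration $f\colon Y\to A$ with $Y\simeq\ast$ the pullback of $f$ along $\ast\to A$ is a model for $\Omega A$. I would take $f$ to be the morphism $\pi_1\colon\rLoops C_r(A)\to A$ of \cref{loopFibrationsSurjective}.

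First I would verify its hypotheses. Since $\pi_1$ is $Z_k$-bidegreewise surjective for every $0\leq k\leq r$ and $S\subseteq\{0,1,\ldots,r\}$, it is $Z_s$-bidegreewise surjective for all $s\in S$ and hence a fibration of $\fChS$. To see that $\rLoops C_r(A)\simeq\ast$, observe that $C_r(A)$ is the $r$-cone of the identity, which is an $r$-weak equivalence, so $C_r(A)$ is $r$-acyclic by \cref{rConeIsrAcyclic}, i.e.\ weakly equivalent to the zero object; as $\rLoops$ is additive and only reindexes the associated spectral sequence, it preserves $r$-acyclicity, giving $\rLoops C_r(A)\simeq\ast$.

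Next I would compute the pullback. In the additive category $\fCh$ the pullback of $\pi_1$ along the zero map $\ast\to A$ is the kernel $\ker\pi_1$, formed filtration- and cohomological-degreewise. From \cref{rConeFilteredChains} the cone $C_r(A)$ sits in a short exact sequence $0\to A\to C_r(A)\to\rSusp A\to 0$, the first map being the inclusion of the summand $A$ (on which the cone differential restricts to that of $A$, with the original filtration) and the second the projection onto $\rSusp A$; the morphism $\pi_1$ is obtained by applying $\rLoops$ to this projection. Since $\rLoops$ is an equivalence of categories it is exact, so applying it yields $0\to\rLoops A\to\rLoops C_r(A)\xrightarrow{\pi_1}A\to0$, whence $\ker\pi_1=\rLoops A$ and therefore $\Omega A\simeq\rLoops A$.

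Finally I would deduce stability. Both $\rSusp$ and $\rLoops$ are mutually inverse auto-equivalences of $\fCh$ which preserve $r$-weak equivalences, since they shift the pages of the associated spectral sequence by the bidegree of the $r$-page differential and hence carry isomorphisms of $(r+1)$-pages to isomorphisms; they therefore descend to mutually inverse auto-equivalences of $\Ho(\fChS)$. The computation above identifies the derived loops functor with $\rLoops$, and by the suspension--loops adjunction on the homotopy category its left adjoint, the derived suspension, is identified with the inverse $\rSusp$, so $\Sigma A\simeq\rSusp A$. Both functors being equivalences of $\Ho(\fChS)$, the model category $\fChS$ is stable. I expect the main obstacle to be the explicit kernel computation pinning down $\Omega A$; once $\Omega A\simeq\rLoops A$ is in hand the passage to stability is formal.
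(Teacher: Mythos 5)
Your proposal is correct and takes essentially the same route as the paper: both apply \cref{loopsViaPullback} to the fibration $\pi_1\colon\rLoops C_r(A)\to A$ of \cref{loopFibrationsSurjective}, identify the pullback (the kernel, computed degreewise) with the summand $\rLoops A$ of $\rLoops C_r(A)\cong A\oplus\rLoops A$, and deduce $\Sigma A\simeq\rSusp A$ from invertibility of $\rLoops$. Your write-up merely makes explicit some hypotheses the paper leaves implicit (pointedness, $r$-acyclicity of $\rLoops C_r(A)$, and the short exact sequence giving the kernel), all of which check out.
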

\begin{proof}
  We need to compute the pullback
  \begin{equation}\label{loopsPullbackDiagram}
    \begin{tikzcd}
      P\arrow[r,dashed]\arrow[d,dashed]\arrow[dr,very near start, "\lrcorner", phantom]&\rLoops C_r(A)\arrow[d,"\rLoops\pi_r", two heads]\\
      0\arrow[r]&A
    \end{tikzcd}
  \end{equation}
  where $C_r(\rLoops A)\coloneqq A\oplus\rLoops A$ with a twisted differential.
  The morphism $\rLoops\pi_r$ is simply projection onto the $A$ summand and we immediately have that the pullback is (isomorphic to) $\rLoops A$.
  The functor $\rLoops$ is invertible hence the suspension functor can be modelled by its inverse $\rSusp A$.
\end{proof}
\subsection{Stability of $\bChS$}
The proof for bicomplexes is similar for $r=0$, however for $r\geq 1$ we show that $\rLoops A$ is $r$-quasi isomorphic to a pullback of the form of \cref{loopsPullbackDiagram}.
There is a projection map $\pi\colon \ZW_r(r,r-1)\rightarrow R^{r,r-1}$ which when tensored on the right by $A$ gives the morphism $\psi_r\colon C_r(A)\rightarrow \rSusp A$.
So $\rLoops\pi\otimes \id_A$ gives a morphism $\psi_r\colon\rLoops C_r(A) \rightarrow A$ which is $ZW_k$-bidegreewise surjective for $0\leq k\leq r$ with $r$-acyclic domain.

Write $\NW_r(r,r-1)$ for the pullback of $\pi\colon\ZW_r(r,r-1)\rightarrow R^{r,r-1}$ by the $0$ morphism. This is essentially the bicomplex $\ZW_r(r,r-1)$ where the module in bidegree $(r,r-1)$ has been deleted.

There is then a pullback diagram of the form:
\begin{equation*}
  \begin{tikzcd}
    \rLoops\NW_r(r,r-1)\otimes A\arrow[r,dashed]\arrow[d,dashed]
    \arrow[dr,phantom,"\lrcorner", very near start]
    & \rLoops C_r(A)\arrow[d,"\rLoops \psi_r"]\\
    0\arrow[r]&A
  \end{tikzcd}\,.
\end{equation*}
We also write $i\colon R^{0,0}\rightarrow \ZW_r(r,r-1)$ for the inclusion of the bidegree $(0,0)$ component, so there's a morphism $\rLoops i\colon R^{-r,-r+1}\rightarrow \rLoops\NW_r(r,r-1)$ and we now claim that the morphism
\begin{equation*}
  \rLoops i\otimes\id_A\colon \rLoops A=R^{-r,-r+1}\otimes A
  =\rLoops R^{0,0}\otimes A\longrightarrow \rLoops\NW_r(r,r-1)\otimes A 
\end{equation*}
is a $1$-quasi isomorphism.
\begin{prop}
  The model categories $\bChS$ of \cref{bicomplexesSModelCategories} are stable model categories with $\Omega A\simeq \rLoops A$ and $\Sigma A\simeq \rSusp A$.
\end{prop}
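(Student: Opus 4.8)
The plan is to compute the derived loops functor through the strict pullback of \cref{loopsViaPullback} and then to identify the outcome with the invertible functor $\rLoops$, just as for filtered chains, the only extra subtlety being that for $r\geq 1$ the strict pullback is not literally $\rLoops A$ but merely $r$-quasi-isomorphic to it. First I would note that $\bChS$ is pointed, the zero bicomplex being both initial and terminal, and right proper, every object being fibrant, so \cref{loopsViaPullback} is available. The morphism $\rLoops\psi_r\colon\rLoops C_r(A)\rightarrow A$ of the displayed pullback is $ZW_k$-bidegreewise surjective for all $0\leq k\leq r$ by \cref{loopFibrationSurjectiveBicomplexes}, hence surjective on $s$-witness cycles for every $s\in S$ and so a fibration of $\bChS$; its domain is $r$-acyclic by \cref{rConeIsAcyclicBicomplexes}, so $\rLoops C_r(A)\simeq 0=\ast$. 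The hypotheses of \cref{loopsViaPullback} therefore hold, and the strict pullback of $\rLoops\psi_r$ along $0\rightarrow A$ models $\Omega A$; by the displayed diagram this pullback is $\rLoops\NW_r(r,r-1)\otimes A$, so $\Omega A\simeq\rLoops\NW_r(r,r-1)\otimes A$.

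It then remains to identify this with $\rLoops A$. For $r=0$ one argues exactly as in the filtered-chain case: the fibration is a projection and the strict pullback is already (isomorphic to) $\rLoops A$. For $r\geq 1$ I would use the morphism $\rLoops i\otimes\id_A\colon\rLoops A\rightarrow\rLoops\NW_r(r,r-1)\otimes A$ and argue that it is a weak equivalence of $\bChS$: it is a $1$-quasi-isomorphism, and since an isomorphism on the $E_2$-page induces an isomorphism on every later page $E_k$, $k\geq 2$, by functoriality of homology, a $1$-quasi-isomorphism is in particular an $r$-quasi-isomorphism for every $r\geq 1$. Hence $\Omega A\simeq\rLoops A$. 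Since $\rLoops$ is an invertible endofunctor of $\bCh$ preserving $r$-quasi-isomorphisms, with inverse $\rSusp$, it descends to mutually inverse self-equivalences of $\Ho(\bChS)$; combined with $\Omega A\simeq\rLoops A$ this yields $\Sigma A\simeq\rSusp A$ and shows the derived loops and suspension functors are mutually inverse equivalences of $\Ho(\bChS)$, so $\bChS$ is stable.

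The main obstacle is the verification that $\rLoops i\otimes\id_A$ is a $1$-quasi-isomorphism. Here $\NW_r(r,r-1)$ is $\ZW_r(r,r-1)$ with the module in its top bidegree removed and $\rLoops i$ is the inclusion of a single generating copy of $R$; the content is that the remaining staircase summands are assembled from $0$- and $1$-acyclic pieces that vanish by the $E_2$-page. I would make this precise by a bidegreewise change of basis, in the spirit of \cref{LOfSCycle}, splitting off the acyclic summands so that the computation reduces to the surviving generator, and then compare the resulting $E_2$-page with that of $\rLoops A$. The delicate point is controlling the effect of $-\otimes A$ on the $E_2$-page for a general bicomplex $A$, which I would handle using that the acyclic summands are built from representable witness cycles whose tensor products with $A$ retain enough acyclicity for the induced map on $E_2$ to remain an isomorphism.
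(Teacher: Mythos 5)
Your overall architecture is exactly that of the paper's proof: you verify that $\bChS$ is pointed and right proper so that \cref{loopsViaPullback} applies, use the fibration $\rLoops\psi_r\colon\rLoops C_r(A)\rightarrow A$ (a fibration for every $S\subseteq\{0,\ldots,r\}$ by \cref{loopFibrationSurjectiveBicomplexes}) with weakly trivial domain by \cref{rConeIsAcyclicBicomplexes}, identify the strict pullback as $\rLoops\NW_r(r,r-1)\otimes A$, reduce everything to the claim that $\rLoops i\otimes\id_A$ is a $1$-quasi-isomorphism, handle $r=0$ separately, and conclude stability from invertibility of $\rLoops$. Your observation that a $1$-quasi-isomorphism is an $r$-quasi-isomorphism for all $r\geq 1$ is correct and is exactly the inclusion $\mc{E}_1\subset\mc{E}_r$ the paper records at the start of \cref{bousfieldLocalisations}. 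All of this matches the paper.

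The gap is in your proposed verification of the key claim. You plan a ``bidegreewise change of basis, in the spirit of \cref{LOfSCycle}, splitting off the acyclic summands.'' No such splitting exists: in $\NW_r(r,r-1)$ every occupied bidegree carries a \emph{single} copy of $R$, so a bidegreewise change of basis can only rescale generators, and the complement of the image of $i$ is a single connected zigzag in which each generator with two outgoing differentials hits one generator under $d_0$ and a \emph{different} one under $d_1$; such staircases are indecomposable as bicomplexes. The situation in \cref{LOfSCycle} is genuinely different, since there $\mc{L}\Z_s(p,n)$ has bidegrees containing $R\oplus R$, which is precisely what makes a change of basis available. Consequently the summand-by-summand reduction, and your accompanying appeal to acyclicity of tensor products of the (nonexistent) summands, fails as stated. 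The repair --- which is what the paper's ``simple spectral sequence computation'' amounts to --- is to argue only at the level of the $E_0$- and $E_1$-pages, where no decomposition of the bicomplex is needed: since $E_0^{p,q}(B)=B^{p,q}$ with differential $d_0$, only the vertical structure matters. The cokernel of $\rLoops i$ has columns that are identity $d_0$-cones, so as a vertical complex it is a direct sum of contractible cones; because the vertical differential of a tensor product of bicomplexes depends only on the vertical complexes of the factors (the contracting homotopy $h$ tensors, with the usual sign, to a contraction $h\otimes\id_A$), the cokernel of $\rLoops i\otimes\id_A$ has vanishing $E_1$-page. The bidegreewise split short exact sequence then shows $\rLoops i\otimes\id_A$ induces an isomorphism on $E_1$, hence on every subsequent page, in particular on $E_2$ --- even stronger than the $1$-quasi-isomorphism you need. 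Run this way, your worry about controlling the effect of $-\otimes A$ on the $E_2$-page for general $A$ dissolves, since isomorphisms of spectral sequences propagate to all later pages automatically.
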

\begin{proof}
  For $r=0$ the pullback in question is exactly $\Omega^0A$.
  For $r\geq 1$ it suffices to show the above morphism $\rLoops i\otimes\id_A$ is a $1$-quasi isomorphism.
  Given that for a bicomplex $B$ that $E_0^{p,p+n}(B)=B^{p,p+n}$ and that the $0$-page differential is given by the vertical differential $d_0$ of $B$, this is a simple spectral sequence computation.

  Hence $\Omega A\simeq \rLoops A$ and since $\rLoops$ is an invertible endofunctor of $\bCh$ we have $\Sigma A\simeq \rSusp A$.
\end{proof}

\section{Bousfield localisations of $\fChS$ and $\bChS$}\label{bousfieldLocalisations}
Given a morphism of spectral sequences $f\colon\left\{E_r^{\bullet,\bullet},d_r\right\} \rightarrow\left\{E_r^{\bullet,\bullet},d'_r\right\}$ if $f$ is an isomorphism of differential graded modules between the $r$-pages then it is an isomorphism between all subsequent pages.
Thus writing $\mc{E}_r$ for the class of $r$-quasi-isomorphisms, for either $\fCh$ or $\bCh$, there is a sequence of strict inclusions:
\begin{equation*}
  \mc{E}_0\subset\mc{E}_1\subset\mc{E}_2\subset\ldots\subset\mc{E}_r\subset\ldots\,.
\end{equation*}
Given then finite non-empty sets $S$ and $T$ indexing model structures on $\fCh$ or $\bCh$ with $\max S<\max T$ the $S$-weak equivalences are a subclass of the $T$-weak equivalences.
We can then ask ``is the $T$-model structure a Bousfield localisation of the $S$-model structure?''.

\subsection{Non-existence of certain left Bousfield localisations}
Given one of the model structures on $\fCh$ or $\bCh$ with $r$-weak equivalences we show there is no left Bousfield localisation with weak equivalences the $(r+1)$-weak equivalences.
\begin{prop}
  Let $\mc{M}_S$ be any of $\fChS$ or $\bChS$ where $r=\max S$.
  There is no left Bousfield localisation $\mc{M}_{\new}$ of $\mc{M}_S$ whose weak equivalences are the $(r+1)$-weak equivalences.
\end{prop}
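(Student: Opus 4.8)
The plan is to argue by contradiction, exploiting that every object of $\mc M_S$ is fibrant, which severely constrains the possible $\mc C$-local objects. Suppose a left Bousfield localisation $\mc M_{\new}=L_{\mc C}\mc M_S$ existed whose weak equivalences are exactly the $(r+1)$-weak equivalences, for some class $\mc C$. The first step is to observe that for any object $Z$ which is $(r+1)$-acyclic, the map $0\to Z$ is an $(r+1)$-weak equivalence and hence a $\mc C$-local equivalence; by the definition of local equivalence, $\map(Z,W)\to\map(0,W)=\ast$ is then a weak equivalence of simplicial sets for every $\mc C$-local $W$, so $\map(Z,W)\simeq\ast$. Thus every $\mc C$-local object annihilates all $(r+1)$-acyclic objects.

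The heart of the proof is the claim that every $\mc C$-local object $W$ is therefore weakly trivial, i.e.\ $r$-acyclic. I would prove this in two steps using the representing $(r+1)$-cycle object $\Z_{r+1}(p,n)$ (and, for bicomplexes, $\ZW_{r+1}(p,p+n)$), which is itself $(r+1)$-acyclic while the class of its top generator is nonzero in $E_{r+1}$. First, suppose $W$ is not $(r+1)$-acyclic, so $E_{r+2}(W)\neq 0$ in some bidegree $(p,p+n)$; then $\ker(d_{r+1})\neq 0$ there, giving a nonzero class $\xi\in E_{r+1}(W)$ that is the image of an honest $(r+1)$-cycle $z$. By representability $z$ corresponds to a morphism $f\colon\Z_{r+1}(p,n)\to W$, and on $E_{r+1}$ this $f$ sends the nonzero class of the top generator to $[z]=\xi\neq 0$. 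Since $E_{r+1}$ inverts the weak equivalences it descends to a functor on $\Ho\mc M_S$, and a morphism that is null there induces zero on $E_{r+1}$; hence $f$ represents a nonzero element of $[\Z_{r+1}(p,n),W]=\pi_0\map(\Z_{r+1}(p,n),W)$ (after replacing $\Z_{r+1}(p,n)$ by a cofibrant model, which leaves $E_{r+1}$ unchanged up to isomorphism). This contradicts $\map(\Z_{r+1}(p,n),W)\simeq\ast$, so a $\mc C$-local $W$ must be $(r+1)$-acyclic. Second, an $(r+1)$-acyclic $W$ is itself an admissible $Z$ above, so $\map(W,W)\simeq\ast$, forcing $\id_W$ to be null and hence $W\simeq\ast$ in the homotopy category (which is additive since $\mc M_S$ is stable).

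Granting the claim, the contradiction is immediate: if every $\mc C$-local object is weakly trivial then $\map(X,W)\simeq\ast$ for all $X$ and all local $W$, so $\map(g,W)$ is a weak equivalence for every morphism $g$, i.e.\ every morphism of $\mc M_S$ is a $\mc C$-local equivalence. But the $\mc C$-local equivalences are supposed to be precisely the $(r+1)$-weak equivalences, and these form a proper subclass: for instance $0\to R_{(0)}^0$ (respectively $0\to R^{0,0}$ for bicomplexes) is not an $(r+1)$-weak equivalence, since its target has nonzero $(r+2)$-page. This rules out the existence of $\mc M_{\new}$. The bicomplex case runs verbatim with $r$-witness cycles, $E_{r+1}$ computed from witness cycles, and $\ZW_{r+1}(p,p+n)$ in place of $\Z_{r+1}(p,n)$.

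The step I expect to be the main obstacle is the middle one: verifying that the tautological morphism $\Z_{r+1}(p,n)\to W$ built from an $(r+1)$-cycle is genuinely nonzero in $\Ho\mc M_S$. This rests on two explicit facts about the representing objects — that $\Z_{r+1}(p,n)$ (resp.\ $\ZW_{r+1}(p,p+n)$) is simultaneously $(r+1)$-acyclic and carries a surviving $E_{r+1}$-class in its top bidegree — together with the observation that $E_{r+1}$, as an additive functor inverting the $r$-quasi-isomorphisms, detects nonzero maps in the homotopy category. Carrying out the analogous computation with witness cycles for bicomplexes is where the routine but fiddly bookkeeping lies.
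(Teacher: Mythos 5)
Your proof is correct, but it argues on the opposite side of the model structure from the paper's. The paper's proof never mentions local objects: it uses only that a left Bousfield localisation keeps the cofibrations, so $\Cof_{\new}=\Cof_S$ is still generated by $I_S$; it observes that every generating cofibration lies in $\mc{E}_{r+1}$ (the domains and codomains $\Z_{r+1}(p,n)$, $\B_{r+1}(p,n)$ and $\Z_s(p,n)$ for $s\leq r$ are all $(r+1)$-acyclic), deduces that all $I_{\new}\mathdash\Cell$ morphisms and hence all cofibrations of $\mc{M}_{\new}$ are $(r+1)$-weak equivalences, and then factors an arbitrary morphism as a cofibration followed by an acyclic fibration to conclude that every morphism would be a weak equivalence --- the same absurdity you reach. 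You instead work with the $\mc{C}$-local objects: since $0\to Z$ is a $\mc{C}$-local equivalence for every $(r+1)$-acyclic $Z$, and since $\Z_{r+1}(p,n)$ (resp.\ $\ZW_{r+1}(p,p+n)$) is simultaneously $(r+1)$-acyclic and represents $E_{r+1}$-classes, every local $W$ is weakly trivial, so every morphism is a local equivalence. Both arguments are sound; yours is longer but more explanatory, exhibiting \emph{why} the localisation cannot exist (it would have no nontrivial local objects), whereas the paper's is more elementary (no homotopy function complexes, no representability input beyond the generating sets) and rules out the localisation in the weaker sense of \emph{any} model structure with the same cofibrations and the $(r+1)$-weak equivalences, while your argument leans on the presentation $\mc{M}_{\new}=L_{\mc{C}}\mc{M}_S$ for a localising class $\mc{C}$ --- harmless here, since that is exactly the paper's definition. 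One streamlining worth noting: running your first step with a nonzero class of $E_{r+1}(W)$ rather than $E_{r+2}(W)$ shows directly that every local $W$ is $r$-acyclic, i.e.\ already weakly trivial, which makes your second step (self-annihilation of $(r+1)$-acyclic local objects) unnecessary.
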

\begin{proof}
  Suppose such a $\mc{M}_{\new}$ existed, so that the cofibrations, fibrations and weak equivalences of $\mc{M}_{\new}$ satisfy $\Cof_{\new}=\Cof_S$, $\Fib_\new\subset\Fib_S$, and $\mc{W}_{\new}=\mc{W}_{r+1}\supset\mc{W}_r=\mc{W}_S$.
  Note that $\mc{M}_{\new}$ at least has a set of generating cofibration $I_{\new}=I_S$ and that $I_{\new}\subset \mc{W}_{\new}$.
  
  Any $I_\new\mathdash\Cell$ morphism is an $(r+1)$-weak equivalence since transfinite compositions of weak equivalences are weak equivalences, \cite[Corollary 7.4.2]{H}.
  Since any cofibration is a retract of a cellular one all elements of $\Cof_\new$ are $(r+1)$-weak equivalences.
  This implies any morphism of $\mc{M}_\new$ is an $(r+1)$-weak equivalence which is false, hence no such left Bousfield localisation of $\mc{M}_S$ exists.
\end{proof}


\section{Quillen equivalences between the $S$-model structures}
Using the results of the previous sections we establish Quillen equivalences between $\fChS$ and $\bChS$ in \cref{quillenEquivalencesFCHSandBCHS}.
Finally in \cref{distributiveLatticeN} we construct a distributive lattice structure on the set of model structures $\fChS$ indexed by the sets $S$ where $T\leq S$ if there are a composite of the left adjoints: identity functors or shift functors from $\fChT$ to $\fChS$.

\subsection{Quillen equivalences between $\fChS$ and $\bChS$}\label{quillenEquivalencesFCHSandBCHS}
In this section $S$ will denote a subset of $\{0,1,2,\ldots,r\}$ containing both $0$ and $r$ regardless of whether it indexes a model structure of $\fCh$ or $\bCh$ unless otherwise specified.

\begin{prop}\label{LTotAdjunction}
  Let $S\subseteq\{0,1,2,\ldots,r\}$ containing both $0$ and $r$.
  There is a Quillen adjunction $\inadj{\mc{L}}{\fChS}{\bChS}{\Totp}$.
\end{prop}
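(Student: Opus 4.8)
The plan is to leverage the abstract adjunction $\mc{L}\dashv\Totp$ already established in \cref{LTotpAdjunction} and to verify that it is a Quillen adjunction by checking that $\Totp$ is a right Quillen functor. Since every object of $\bChS$ is fibrant and both model structures are cofibrantly generated, it suffices to show that $\Totp$ preserves fibrations and weak equivalences; preservation of weak equivalences together with fibrations subsumes preservation of acyclic fibrations, which is all that is needed.

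First I would dispatch the weak equivalences, which should hold essentially by construction. As recalled in the preliminaries, the $r$-witness cycle and $r$-witness boundary functors were introduced in \cite{CELW} precisely so that the associated spectral sequence of a bicomplex $B$ agrees with the spectral sequence of its product totalisation $\Totp(B)$. This yields a natural isomorphism $E_r^{p,p+n}(B)\cong E_r^{p,p+n}(\Totp B)$ for every $r$, so that $f\colon B\to B'$ is an $r$-quasi-isomorphism in $\bChS$ if and only if $\Totp(f)$ is one in $\fChS$. In particular $\Totp$ preserves (indeed reflects) the weak equivalences.

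The substance of the argument is preservation of fibrations. A fibration of $\bChS$ is a morphism $f\colon B\to B'$ that is $ZW_s$-bidegreewise surjective for every $s\in S$, and I must show $\Totp(f)$ is $Z_s$-bidegreewise surjective for every $s\in S$. The idea is to compute $Z_s^{p,p+n}(\Totp B)$ using representability of $Z_s$ from \cref{associatedSS}, the adjunction isomorphism $\Hom_{\fCh}(\Z_s(p,n),\Totp B)\cong\Hom_{\bCh}(\mc{L}\Z_s(p,n),B)$, and the decomposition of $\mc{L}\Z_s(p,n)$ supplied by \cref{LOfSCycle}. Since $\Hom$ out of a coproduct is the corresponding product, this gives a natural identification
\begin{equation*}
  Z_s^{p,p+n}(\Totp B)\cong ZW_s^{p,p+n}(B)\oplus\prod_{k\geq0}ZW_0^{p-s-k,p-s-k+n}(B)
\end{equation*}
(with the first summand absent when $s=0$). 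Because $0\in S$, a fibration $f$ is simultaneously $ZW_s$-surjective and $ZW_0$-surjective, and since arbitrary products and direct sums of surjections of $R$-modules are again surjective, naturality in $B$ of the displayed identification forces $Z_s^{p,p+n}(\Totp f)$ to be surjective for each $s\in S$. Hence $\Totp(f)$ is a fibration of $\fChS$.

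Combining these two steps, $\Totp$ preserves fibrations and weak equivalences, hence acyclic fibrations, so it is a right Quillen functor and $\mc{L}\dashv\Totp$ is a Quillen adjunction. I expect the only real obstacle to be the fibration step, and specifically the need to know that the identification of $Z_s^{p,p+n}(\Totp B)$ with witness cycles of $B$ is genuinely natural in $B$ so that surjectivity transfers cleanly; this is exactly the data packaged by \cref{LOfSCycle}, which is why that lemma is proved first.
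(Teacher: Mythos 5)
Your proposal is correct, and it reaches the same conclusion by the same overall strategy --- verifying directly that $\Totp$ is a right Quillen functor --- but it does substantially more work than the paper, whose entire proof is one sentence: the weak equivalences and fibrations of $\bChS$ are ``by definition'' the morphisms taken by $\Totp$ to weak equivalences and fibrations of $\fChS$. That one-liner leans on the design of the witness-cycle machinery from \cite{CELW}: the spectral sequence of a bicomplex is defined so as to agree with that of its product totalisation, and $ZW_s$-surjectivity is engineered to correspond to $Z_s$-surjectivity after applying $\Totp$. Where the paper cites this as definitional, you actually prove the fibration correspondence, combining representability of $Z_s^{p,p+n}$, the adjunction isomorphism $\Hom_{\fCh}(\Z_s(p,n),\Totp B)\cong\Hom_{\bCh}(\mc{L}\Z_s(p,n),B)$ from \cref{LTotpAdjunction}, and the decomposition of $\mc{L}\Z_s(p,n)$ in \cref{LOfSCycle} to identify $Z_s^{p,p+n}(\Totp B)$ with $ZW_s^{p,p+n}(B)\times\prod_{k\geq 0}ZW_0^{p-s-k,p-s-k+n}(B)$; naturality is automatic since all three ingredients are natural isomorphisms of functors in $B$. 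This buys you a self-contained verification, it makes transparent exactly where the hypothesis $0\in S$ enters (surjectivity onto the infinite $ZW_0$-tail), and as a by-product it shows $\Totp$ reflects fibrations and weak equivalences --- none of which the paper's proof records. The only cost is a dependency on \cref{LOfSCycle}, which the paper reserves for the later derived-unit argument in the Quillen equivalence; since that lemma is an elementary change-of-basis computation independent of the model structures, there is no circularity. Your argument is sound as stated.
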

\begin{proof}
  This follows by definition of the weak equivalences and fibrations in $\bChS$, they are those morphisms taken to weak equivalences and fibrations in $\fChS$.
\end{proof}

\begin{theo}\label{LTotEquivalence}
  Let $S\subseteq\{0,1,2,\ldots,r\}$ containing both $0$ and $r$.
  There is a Quillen equivalence $\fChS\Quill\bChS$ induced by \cref{LTotAdjunction}.
\end{theo}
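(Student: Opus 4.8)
The plan is to apply part (2) of the Cellularization Principle, \cref{cellularizationPrinciple}, to the Quillen adjunction $\mc{L}\dashv\Totp$ of \cref{LTotAdjunction}, having first exhibited both $\fChS$ and $\bChS$ as cellularizations of themselves at compatible detecting sets. The ambient hypotheses are already in hand: both categories are right proper by construction, cellular by \cref{filteredChainsAreCellular,bicomplexesAreCellular}, and stable by \S\ref{stability}; they are also left proper by \S\ref{properness}. Applying Dugger's \cref{duggersTheorem} to $\fChS$ yields a set $\mc{K}$ of cofibrant objects detecting weak equivalences, which may be taken to be cofibrant replacements of the domains and codomains of $I_S$, that is (up to weak equivalence) the objects $\Z_{r+1}(p,n)$, $\B_{r+1}(p,n)$, $\Z_s(p,n)$ for $s\in S$, and $0$, over all $p,n\in\Zbb$.

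Since $\mc{K}$ detects weak equivalences and cellularization retains fibrations while replacing the weak equivalences by the $\mc{K}$-colocal ones, the $\mc{K}$-cellularization of $\fChS$ is again $\fChS$. The corresponding set in $\bChS$ is computed by \cref{LOfSCycle}: each $\mc{L}\Z_s(p,n)$ splits as $\ZW_s(p,p+n)$ together with an infinite sum of copies of $\ZW_0$, each of which is acyclic and hence weakly trivial in $\bChS$, so that $\mc{L}Q\Z_s(p,n)\simeq\ZW_s(p,p+n)$; using $\B_{r+1}(p,n)\cong\Z_r(p+r,n-1)\oplus\Z_r(p-1,n)$ one likewise finds $\mc{L}Q\B_{r+1}(p,n)$ weakly equivalent to $\BW_{r+1}(p,p+n-1)$ up to weakly trivial summands. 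Thus $\mc{L}Q\mc{K}$ agrees, up to weak equivalence and removal of weakly trivial objects, with the Dugger detecting set $\{\ZW_{r+1},\BW_{r+1},\ZW_s\ (s\in S),\ZW_0\}$ for $\bChS$; as weakly trivial cofibrant objects impose no condition on weak equivalences, $\mc{L}Q\mc{K}$ detects weak equivalences in $\bChS$, and so its cellularization of $\bChS$ is $\bChS$ itself.

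It then remains to check the three conditions of \cref{cellularizationPrinciple}(2). The set $\mc{K}$ is stable because the suspension of \S\ref{stability} is a shift of bidegree, so it permutes the families $\{\Z_s(p,n)\}_{p,n}$ and $\{\B_{r+1}(p,n)\}_{p,n}$; each member of $\mc{K}$ is finite by \cref{FiniteFilteredChains} and hence homotopically small, and each $\mc{L}QK$ is weakly equivalent to a finite bicomplex by the computation above and so is homotopically small as well. For the derived unit $QK\to\Totp R\mc{L}QK$: when $K=\Z_s(p,n)$ with $s\geq 1$ it is the unit of \cref{unitIsSIsomorphism}, an isomorphism on the $s$-page and hence on the $(r+1)$-page since $s\leq r+1$, so an $r$-quasi-isomorphism; the object $\Z_{r+1}$ and the $\Z_r$-summands of $\B_{r+1}$ fall under the same statement, while for $\Z_0$ (and $\B_1$ when $r=0$) both source and target are weakly equivalent to $0$ and the map is automatically a weak equivalence. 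Invoking \cref{cellularizationPrinciple}(2) then gives the desired Quillen equivalence $\fChS\Quill\bChS$.

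The step I expect to be the main obstacle is the identification in the second paragraph: confirming that $\mc{L}Q\mc{K}$ still detects weak equivalences in $\bChS$. This rests squarely on the decomposition of \cref{LOfSCycle} and on the observation that the spurious summands produced by $\mc{L}$ are exactly the weakly trivial witness-cycle objects $\ZW_0$, which may be discarded without affecting detection; the derived-unit verification is then a direct appeal to \cref{unitIsSIsomorphism} once the bidegree bookkeeping is carried through.
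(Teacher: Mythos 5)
Your proposal is correct and follows essentially the same route as the paper: both apply the Cellularization Principle to $\mc{L}\dashv\Totp$, use Dugger's \cref{duggersTheorem} to identify the cellularizations of $\fChS$ and $\bChS$ with themselves, and verify stability, homotopical smallness and the derived-unit condition via \cref{LOfSCycle,unitIsSIsomorphism}. The only difference is bookkeeping: you carry the full Dugger set, including $\Z_{r+1}(p,n)$ and $\B_{r+1}(p,n)\cong\Z_r(p+r,n-1)\oplus\Z_r(p-1,n)$, and treat the $s=0$ unit case explicitly, whereas the paper takes $\mc{K}=\{\Z_s(p,n)\}_{s\in S}$ and leaves these reductions implicit --- if anything your version is slightly more careful.
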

We will apply Greenlees and Shipley's cellularization principle, \cref{cellularizationPrinciple}, to the adjunction $\inadj{\mc{L}}{\fChS}{\bChS}{\Totp}$ of \cref{LTotAdjunction} where we right Bousfield localise at the $s$-cycles of $\fChS$ for $s\in S$.
We will show that these right Bousfield localisations don't change the model structures.
\begin{proof}
  In the setup of \cref{cellularizationPrinciple} we set $\mc{M}=\fChS$, $\mc{N}=\bChS$, $F=\mc{L}$ and $U=\Totp$ noting that both $\mc{M}$ and $\mc{N}$ are right proper (by construction) and cellular (\cref{filteredChainsAreCellular,bicomplexesAreCellular}) model categories. We take as our set of objects $\mc{K}=\{\Z_s(p,n)\}_{s\in S, p,n\in\Zbb}$ the set given by Dugger's theorem (\cref{duggersTheorem}) applied to $\fChS$ noting it is a left proper and cofibrantly generated model category.

  Part 1 of \cref{cellularizationPrinciple} is then immediate, i.e.\ there's a Quillen adjunction:
  \begin{equation}\label{CellularizedAdjunction}
    \adj{\mc{L}}{\mc{K}\mathdash\cellularization\mathdash\fChS}
    {\mc{L}\mc{K}\mathdash\cellularization\mathdash\bChS}{\Totp}
  \end{equation}
  and note we've dropped the cofibrant replacement $Q$ from the adjunction since all $\Z_s$ are cofibrant in $\fChS$.
  By \cref{duggersTheorem} the model structures $\fChS$ and $\mc{K}\mathdash\cellularization\mathdash\fChS$ agree by our choice of $\mc{K}$. We now identify the model structure $\mc{L}\mc{K}\mathdash\cellularization\mathdash\bChS$ with $\bChS$.

  We consider the set $\mc{L}\mc{K}$.
  By \cref{LOfSCycle} the set $\mc{L}\mc{K}$ consists of direct sums of witness $s$-cycles and witness $0$-cycles for $s\in S$.
  Bousfield localising at such a set is equivalent to Bousfield localising at the set of witness $s$-cycles since $0\in S$.
  However such a set is the same as obtained by applying \cref{duggersTheorem} to $\bChS$, and hence the right Bousfield localisation of $\bChS$ at $\mc{L}\mc{K}$ does not change the model structure.
  Hence Adjunction \ref{CellularizedAdjunction} becomes:
  \begin{equation*}
    \adj{\mc{L}}{\fChS}{\bChS}{\Totp}
  \end{equation*}
  and it remains to show the conditions of part 2 of \cref{cellularizationPrinciple}.

  The set $\mc{K}=\{\Z_s(p,n)\}_{s\in S, p,n\in\Zbb}$ is a stable set under $\rLoops$ and $\rSusp$ and each $s$-cycle is homotopically small in $\fChS$ by \cite[Theorem 7.4.3]{H}.
  Similarly elements of $FQ\mc{K}=\mc{L}\mc{K}$ are homotopically small by the same theorem: we can ignore the $0$-witness cycle summands of elements of $\mc{L}\mc{K}$ as they vanish in the homotopy category.
  Lastly by \cref{unitIsSIsomorphism} the derived unit of the adjunction is a weak equivalence on each element of $\mc{L}\mc{K}$.
  
  Hence by \cref{cellularizationPrinciple} the adjunction $\mc{L}\dashv\Totp$ is a Quillen equivalence.
\end{proof}
As a corollary we obtain all the model structures $\fChS$ ($0$ need not be in $S$ here) and $\bChS$, as $S$ varies, present the same homotopy category.

\begin{coro}\label{allModCatsQuillenEquiv}
  Let $S$ and $S'$ be finite subsets of $\mathbb{N}\cup\{0\}$ and $T$ and $T'$ be finite subsets of $\mathbb{N}\cup\{0\}$ containing $0$. Then there are zig-zags of Quillen equivalences between any two of $\fChS$, $\left(\fCh\right)_{S'}$, $\left(\bCh\right)_T$ and $\left(\bCh\right)_{T'}$.
  In particular their homotopy categories are all equivalent.
\end{coro}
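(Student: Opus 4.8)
The plan is to exploit that ``being connected by a zig-zag of Quillen equivalences'' is an equivalence relation on model categories: it is reflexive, symmetric (a Quillen equivalence supplies its own inverse zig-zag), and transitive (zig-zags concatenate). It therefore suffices to connect each of the four listed model categories to a single fixed reference, for which I would take $\left(\fCh\right)_{\{0\}}$, the model structure on filtered chain complexes indexed by $S=\{0\}$. The three families of Quillen equivalences already established --- shift-d\'ecalage (\cref{SDecQEquiv}), identity-identity (\cref{ididQEquiv}), and the totalisation equivalence (\cref{LTotEquivalence}) --- supply exactly the moves needed.

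First I would handle an arbitrary $\fChS$ with $r=\max S$. Since $r\in S$ we have $\{r\}\subseteq S$ with $\max\{r\}=\max S=r$, so \cref{ididQEquiv} gives a Quillen equivalence $\left(\fCh\right)_{\{r\}}\Quill\fChS$. Next, writing $\{r\}=\{0\}+r$ and applying \cref{SDecQEquiv} with $l=r\geq 0$ yields a Quillen equivalence $\left(\fCh\right)_{\{0\}}\Quill\left(\fCh\right)_{\{0\}+r}=\left(\fCh\right)_{\{r\}}$. Concatenating these, every $\fChS$ is connected by a zig-zag to $\left(\fCh\right)_{\{0\}}$, which in particular handles both $S$ and $S'$.

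For the bicomplex cases I would use that each $T$ in question contains $0$, so with $r=\max T$ we have $\{0,r\}\subseteq T\subseteq\{0,1,\ldots,r\}$; in particular $T$ contains both $0$ and $r$, which is precisely the hypothesis of \cref{LTotEquivalence}. That theorem then gives a Quillen equivalence $\fChT\Quill\left(\bCh\right)_T$. Composing with the filtered-chain argument of the previous paragraph (applied to the set $T$) connects $\left(\bCh\right)_T$ to $\left(\fCh\right)_{\{0\}}$, and likewise for $T'$. Thus all four categories $\fChS$, $\left(\fCh\right)_{S'}$, $\left(\bCh\right)_T$ and $\left(\bCh\right)_{T'}$ are connected by zig-zags to $\left(\fCh\right)_{\{0\}}$, hence to one another by transitivity, and passing to homotopy categories gives the equivalences of the final sentence.

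Since the substantive work is contained in \cref{LTotEquivalence}, the only real care here is bookkeeping: checking at each step that the hypotheses of the cited equivalences hold (the equality of maxima for \cref{ididQEquiv}, the non-negativity $l\geq 0$ of the shift parameter for \cref{SDecQEquiv}, and the simultaneous membership of both $0$ and $\max T$ for \cref{LTotEquivalence}). The one thing I expect to be worth getting right is the choice of reference object: $\left(\fCh\right)_{\{0\}}$ is reachable from every case because identity-identity collapses any $S$ onto $\{\max S\}$ and shift-d\'ecalage then slides $\{\max S\}$ down to $\{0\}$, so no separate argument per pair of categories is needed.
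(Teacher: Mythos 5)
Your proposal is correct and takes essentially the same route as the paper: the paper's proof likewise connects every $\fChS$ to $\left(\fCh\right)_{\{0\}}$ via the identity-identity and shift-d\'ecalage Quillen equivalences and every $\left(\bCh\right)_T$ to $\left(\fCh\right)_T$ via \cref{LTotEquivalence}, then concatenates the zig-zags. Your write-up merely makes explicit the bookkeeping the paper leaves implicit (the choice $\{r\}\subseteq S$ for \cref{ididQEquiv} and the shift parameter $l=r$ for \cref{SDecQEquiv}).
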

\begin{proof}
  By the identity-identity and shift-d\'ecalage adjunctions any $\fChS$ is Quillen equivalent to $\left(\fCh\right)_{\{0\}}$. By \cref{LTotEquivalence} any $\left(\bCh\right)_T$ is Quillen equivalent to $\left(\fCh\right)_T$ via $\mc{L}\dashv\Totp$.
  It then follows any two of the model structures on $\fCh$ or $\bCh$ have a zig-zag of Quillen equivalences between them.
\end{proof}
\subsection{A distributive lattice of $S$-model structures on $\fCh$}\label{distributiveLatticeN}
We finish by describing a distributive lattice structure on the \textit{poset of model categories of the form $\fChS$}.
By this we mean the poset of finite subsets of $\mathbb{N}\cup \left\{0\right\}$ where $T\leq S$ if there is a composite of left adjoints of the identity-identity and shift-d\'ecalage adjunctions from the $T$-model structure to the $S$-model structure on $\fCh$.
We denote this poset by $\mc{N}$.
For those elements whose maxima are $3$ or less this is depicted in \cref{figu:posetN}.

Recall for a set $S$ we write $S\pm a\coloneqq \left\{s\pm a\,|\, s\in S\right\}$.
The above structure is equivalently generated by the two assertions that $T<S$ if either:
\begin{itemize}
\item $T\subset S$ and $\max T = \max S$ (describing the identity-identity Quillen equivalences of \cref{ididQEquiv}), or
\item $S=T+1$ (describing the shift-d\'ecalage Quillen equivalences of \cref{SDecQEquiv} with $l=1$).
\end{itemize}
\begin{figure}[h]
  \centering
  \begin{tikzpicture}[xscale=3,yscale=2]
    \node (0) at (0,0) {$\{0\}$};
    \node (1) at ($(0)+(1,0)$) {$\{1\}$};
    \node (01) at ($(1)+(0,1)$) {$\{0,1\}$};
    \node (2) at ($(1)+(1,0)$) {$\{2\}$};
    \node (12) at ($(01)+(1,0)$) {$\{1,2\}$};
    \node (02) at ($(2)+(-0.5,0.75)$) {$\{0,2\}$};
    \node (012) at ($(02)+(0,1)$) {$\{0,1,2\}$};
    \node (3) at ($(2)+(1,0)$) {$\{3\}$};
    \node (23) at ($(12)+(1,0)$) {$\{2,3\}$};
    \node (13) at ($(02)+(1,0)$) {$\{1,3\}$};
    \node (123) at ($(012)+(1,0)$) {$\{1,2,3\}$};
    \node (03) at ($(13)+(1,0)$) {$\{0,3\}$};
    \node (023) at ($(03)+(0,1)$) {$\{0,2,3\}$};
    \node (013) at (3,1.5) {$\{0,1,3\}$};
    \node (0123) at ($(013)+(0,1)$) {$\{0,1,2,3\}$};
    \node (continue) at (4,1.25) {$\ldots$};
    \draw[->] (013) -- (0123);
    \draw[->] (02) -- (012);
    \draw[->] (13) -- (123);
    \draw[->] (03) -- (023);
    \draw[->] (02) -- (13);
    \draw[->] (13) -- (013);
    \draw[->] (03) -- (013);
    \draw[->] (12) -- (012);
    \draw[->] (0) -- (1);
    \draw[->] (1) -- (2);
    \draw[->] (2) -- (3);
    \draw[->] (1) -- (01);
    \draw[->] (2) -- (02);
    \draw[->] (3) -- (13);
    \draw[->] (3) -- (03);
    \draw[->] (3) -- (23);
    \draw[->] (012) -- (123);
    \draw[->] (123) -- (0123);
    \draw[->] (023) -- (0123);
    \draw[->, very thick, white] (01) -- (12);
    \draw[->] (01) -- (12);
    \draw[->, very thick, white] (12) -- (23);
    \draw[->] (12) -- (23);
    \draw[->, very thick, white] (23) -- (123);
    \draw[->] (23) -- (123);
    \draw[->, very thick, white] (23) -- (023);
    \draw[->] (23) -- (023);
    \draw[->, very thick, white] (2) -- (12);
    \draw[->] (2) -- (12);
  \end{tikzpicture}
  \caption{The poset $\mc{N}$}
  \label{figu:posetN}
\end{figure}
Recall a \textit{lattice} is a poset $\Lambda$ equipped with binary operations $\vee$, called join, and $\wedge$, called meet, on its elements such that $a\leq a\vee b=b\vee a$ and $a\wedge b=b\wedge a\leq a$.
It is further a \textit{distributive lattice} if for all $a,b,c\in \Lambda$ we have
$a\vee(b\wedge c)=(a\vee b)\wedge(a\vee c)$ and $a\wedge(b\vee c)=(a\wedge b)\vee (a\wedge c)$.

We can equip the poset $\mc{N}$ with binary operations which for elements $S$ and $T$ can informally be described as ``the initial model category admitting left adjoints from $S$ and $T$'' and ``the terminal model category admitting left adjoints to $S$ and $T$'' respectively for those model categories indexed by $\mc{N}$.
Note we are only considering the identity-identity and shift-d\'ecalage Quillen equivalences here, we are not precluding existence of other Quillen adjunctions between the model structures on $\fCh$ indexed by elements of $\mc{N}$.

We denote these operations by $\vee$ and $\wedge$ respectively and in set notation can be described as:
\begin{itemize}\label{MeetJoinFormulae}
\item $S\vee T = \left(S+\max\left\{S\cup T\right\}-\max S\right)\cup\left(T+\max\left\{S\cup T\right\}-\max T\right)$, and
\item $S\wedge T = \left(S-\max\left\{S\cup T\right\}+\max T\right)\cap\left(T-\max\left\{S\cup T\right\}+\max S\right)$.
\end{itemize}
We show these describe a distributive lattice structure on $\mc{N}$.
We do so indirectly by showing it is isomorphic to another distributive lattice structure.
\begin{defi}
  For a distributive lattice $\Lambda$ an element $a\in\Lambda$ is said to be \textit{join-irreducible} if it is neither the least element of the lattice nor the join of two smaller elements.
\end{defi}
\begin{defi}
  A \textit{lower set} of $L$ of a lattice $\Lambda$ is a subset $L\subseteq\Lambda$ such that if $l\in L, \lambda\in\Lambda$ with $\lambda\leq l$ then $\lambda\in L$.
\end{defi}
\begin{theo}[Birkhoff's representation theorem]\label{Birkhoff}
  Any finite distributive lattice $\Lambda$ is isomorphic to the distributive lattice on the set of lower sets of the partial order on the join-irreducible elements with meet and join the operations the usual set theoretic intersection and union.\qed
\end{theo}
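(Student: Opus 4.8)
The plan is to exhibit an explicit order-isomorphism between $\Lambda$ and the lattice of lower sets of its poset of join-irreducible elements; since meet and join in the latter are intersection and union, and since any order-isomorphism automatically preserves meets and joins, this yields precisely the asserted isomorphism of distributive lattices. Write $J$ for the set of join-irreducible elements of $\Lambda$ with the induced order, and $\mc{O}(J)$ for the lattice of lower sets of $J$. Define $\phi\colon\Lambda\to\mc{O}(J)$ by $\phi(x)=\{j\in J\mid j\leq x\}$, which is a lower set of $J$ since $j''\leq j\leq x$ forces $j''\leq x$.

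The first step, which uses only finiteness, is to show that every $x\in\Lambda$ is the join of the join-irreducibles below it, that is $x=\bigvee\phi(x)$. I would argue by minimal counterexample: noting $\bigvee\phi(x)\leq x$ always holds, a minimal $x$ failing equality cannot be the bottom element (whose $\phi$ is empty) nor join-irreducible (else $x\in\phi(x)$), so $x=a\vee b$ with $a,b<x$; minimality gives $a=\bigvee\phi(a)$ and $b=\bigvee\phi(b)$, whence $x=\bigvee(\phi(a)\cup\phi(b))\leq\bigvee\phi(x)\leq x$, a contradiction. From this it follows at once that $\phi$ is an order-embedding: it is clearly monotone, and if $\phi(x)\subseteq\phi(y)$ then $x=\bigvee\phi(x)\leq\bigvee\phi(y)=y$. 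In particular $\phi$ is injective.

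The heart of the argument, and the only place distributivity is essential, is the lemma that in a distributive lattice every join-irreducible element is join-prime: if $j\in J$ and $j\leq a\vee b$ then $j\leq a$ or $j\leq b$. This follows from the distributive law via $j=j\wedge(a\vee b)=(j\wedge a)\vee(j\wedge b)$, since join-irreducibility of $j$ then forces $j=j\wedge a$ or $j=j\wedge b$; an easy induction extends this to finite joins. With this in hand I would prove surjectivity of $\phi$: given a lower set $L$, set $x=\bigvee L$. The inclusion $L\subseteq\phi(x)$ is clear, and conversely if $j\in\phi(x)$ then $j\leq\bigvee L$, so join-primality gives $j\leq j'$ for some $j'\in L$, and since $L$ is a lower set $j\in L$; thus $\phi(x)=L$.

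Having shown $\phi$ is a bijective order-embedding with monotone inverse, it is an order-isomorphism, hence preserves meets and joins; since in $\mc{O}(J)$ these are intersection and union, $\phi$ is an isomorphism of distributive lattices, proving the theorem. The main obstacle is precisely the join-prime lemma: the steps establishing that each element is the join of the join-irreducibles beneath it and that $\phi$ is an order-embedding go through in any finite lattice, but surjectivity (and dually the preservation of joins) genuinely fails without distributivity — the pentagon $N_5$ and diamond $M_3$ furnish the standard obstructions — so distributivity must be fed in exactly at the point where one converts join-irreducibility into join-primality.
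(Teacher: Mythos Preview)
Your proof is correct and follows the standard argument; the paper itself does not give a proof but merely cites \cite[Theorem 5.12]{DaveyPriestley} and records that the isomorphism sends $\lambda$ to the set of join-irreducibles below it, with inverse taking a lower set to the join of its elements --- precisely the maps $\phi$ and its inverse that you construct and verify.
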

\begin{proof}
  \cite[Theorem 5.12]{DaveyPriestley}
  The isomorphism is given by sending a $\lambda\in\Lambda$ to the set of join-irreducible elements less than or equal to $\lambda$, and conversely sends a lower set of join-irreducibles to the join of its elements.
\end{proof}

Write $\mc{N}_r$ for the sub-poset of $\mc{N}$ consisting of those elements whose maximum is $r$ or less.
Note the operations $\vee$ and $\wedge$ restrict to $\mc{N}_r$.
We can still define join-irreducible elements in a lattice instead of a distributive lattice.
\begin{lemm}
  The join-irreducible elements of $\mc{N}$ are those elements of the form $\left\{n\right\}$ or $\left\{0,n\right\}$ for $n\geq 1$ and for join-irreducibles of $\mc{N}_r$ we also require $n\leq r$.\qed
\end{lemm}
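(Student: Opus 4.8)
The plan is to work entirely from the generating relations for the order together with the explicit formula for $\vee$, so that everything reduces to elementary manipulations of finite subsets. First I would record a closed form for the partial order: unwinding the two generators, one checks that $T\le S$ holds if and only if $\max T\le\max S$ and $T+(\max S-\max T)\subseteq S$, and that this condition is closed under composition. In particular $\{0\}$ is the least element of $\mc{N}$, since for any $S$ with $r=\max S$ we have $\{0\}<\{1\}<\cdots<\{r\}$ by repeated shifts and $\{r\}\subseteq S$ with equal maxima. This characterisation makes the set of elements strictly below any given one directly computable, which is what the rest of the argument needs.

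Next I would verify that the claimed elements are join-irreducible. Using the order characterisation, the elements strictly below $\{n\}$ are exactly the singletons $\{0\},\ldots,\{n-1\}$, and those strictly below $\{0,n\}$ are exactly the singletons $\{0\},\ldots,\{n\}$. The decisive observation from the $\vee$-formula is that the join of two singletons is again a singleton, namely $\{i\}\vee\{j\}=\{\max(i,j)\}$. Hence no join of strictly smaller elements can produce the singleton $\{n\}$ (its smaller elements are singletons $\{i\}$ with $i<n$, whose joins stay below $n$) nor the two-element set $\{0,n\}$ (a join of singletons is never a two-element set). Since $\{0\}=\bot$ is excluded by definition, this shows every $\{n\}$ and $\{0,n\}$ with $n\ge1$ is join-irreducible.

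Then I would show every remaining element is join-reducible, splitting on cardinality. For $|S|\ge3$, writing $S=\{s_1<\cdots<s_k\}$ with $r=s_k$, both $A\coloneqq S\setminus\{s_1\}$ and $B\coloneqq S\setminus\{s_2\}$ are proper subsets still containing $r$, so $A,B<S$, and having equal maxima $r$ their join is simply $A\vee B=A\cup B=S$. For a two-element set $S=\{a,b\}$ with $1\le a<b$, the decomposition $S=\{0,b-a\}\vee\{b\}$ works: both factors lie strictly below $S$ (the first because $0\in\{0,b-a\}$ but $0\notin S$, the second being a singleton), and the formula gives $(\{0,b-a\}+a)\cup\{b\}=\{a,b\}\cup\{b\}=S$. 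Together with the irreducibility of the previous paragraph this exhausts all cases and pins down the join-irreducibles exactly as the singletons $\{n\}$ and the pairs $\{0,n\}$ with $n\ge1$.

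Finally, for the statement about $\mc{N}_r$, I would note that every element occurring in the reducibility decompositions above has maximum at most $\max S\le r$, and every element below an irreducible $\{n\}$ or $\{0,n\}$ with $n\le r$ again has maximum at most $n\le r$; since $\vee$ restricts to $\mc{N}_r$, the whole argument takes place inside $\mc{N}_r$ and yields the same classification under the extra constraint $n\le r$. I expect the main obstacle to be the two-element case: one must see precisely why $a=0$ forces irreducibility while $a\ge1$ admits the decomposition $\{0,b-a\}\vee\{b\}$. Isolating this single decomposition is the crux of the proof, after which the $|S|\ge3$ reduction and the irreducibility computations are routine consequences of the order characterisation and the singleton-join identity.
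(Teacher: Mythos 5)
Your proposal is correct, and it is worth noting that the paper itself states this lemma with a terminal \qed and no argument, so there is no written proof to compare against; your write-up supplies the missing details in full. All the key steps check out: the closed-form characterisation $T\le S$ iff $\max T\le\max S$ and $T+(\max S-\max T)\subseteq S$ is indeed the transitive closure of the two generators (shifts realise the translation, then the equal-maxima inclusion generator finishes); from it one reads off that the elements strictly below $\{n\}$ are the singletons $\{0\},\ldots,\{n-1\}$ and those strictly below $\{0,n\}$ are the singletons $\{0\},\ldots,\{n\}$, so the identity $\{i\}\vee\{j\}=\{\max(i,j)\}$ kills both candidate joins and establishes irreducibility. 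Your two decompositions for the converse are exactly right: for $\lvert S\rvert\ge3$ the join $\left(S\setminus\{s_1\}\right)\vee\left(S\setminus\{s_2\}\right)=S$ needs no shift since all maxima agree, and for $S=\{a,b\}$ with $1\le a<b$ the decomposition $S=\{0,b-a\}\vee\{b\}$ is the one nontrivial observation (and, as you say, it is precisely the point where $a\ge1$ is used, since $0\in\{0,b-a\}$ but $0\notin S$ gives strictness). The restriction to $\mc{N}_r$ goes through as you argue because $\vee$ restricts and every element appearing in the decompositions has maximum at most $\max S\le r$. This is a natural and arguably canonical proof of the lemma; the only cosmetic remark is that your order characterisation also cleanly re-derives the paper's claim that $\{0\}$ is the least element, which the paper likewise leaves implicit.
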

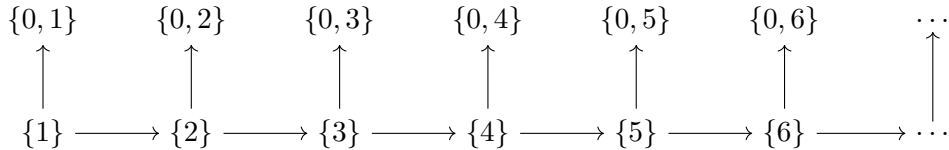
\begin{figure}[h]
  \centering
  \begin{tikzpicture}[xscale=1.95,yscale=1.5]
    \newcounter{TempVar}
    \newcounter{PosetLength}
    \setcounter{PosetLength}{6}
    \newcounter{PosetLengthPOne}
    \setcounter{PosetLengthPOne}{\thePosetLength+1}
    \newcounter{PosetLengthMOne}
    \setcounter{PosetLengthMOne}{\thePosetLength-1}
    \node (inf 1) at (\thePosetLengthPOne,1) {$\ldots$};
    \node (inf 2) at (\thePosetLengthPOne,2) {$\ldots$};
    \foreach \x in {1,...,\thePosetLength}{
      \node (\x 1) at (\x,1) {$\{\x\}$};
      \node (\x 2) at (\x,2) {$\{0,\x\}$};
    }
    \foreach \x in {1,...,\thePosetLengthMOne}{
      \setcounter{TempVar}{\x+1}
      \draw[->] (\x 1) -- (\theTempVar 1);
    }
    \foreach \x in {1,...,\thePosetLength}{
      \draw[->] (\x 1) -- (\x 2);
    }
    \draw[->] (\thePosetLength 1) -- (inf 1);
    \draw[->] (inf 1) -- (inf 2);
  \end{tikzpicture}\caption{The poset of join-irreducibles of $\mc{N}$}
  \label{figu:JIN}
\end{figure}
\begin{lemm}\label{lowerSetsOfNr}
  The lower sets of $\mc{N}_r$ are of the form $\emptyset$ or:
  \begin{equation}
    \left\{\left\{1\right\},\left\{2\right\},\ldots,\left\{s\right\}\right\}\cup
    \left\{\left\{0,t_1,\right\},\left\{0,t_2\right\},\ldots,\left\{0,t_k\right\}\right\}
  \end{equation}
  where $s\leq r$, the first set contains all elements $1\leq i\leq s$ and $\max_i t_i\leq s$.\qed
\end{lemm}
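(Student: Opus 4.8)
The plan is to read the lower sets off the explicit poset of join-irreducibles of $\mc{N}_r$ recorded in the preceding lemma and drawn in \cref{figu:JIN}; this is the poset whose lower sets feed into \cref{Birkhoff}. First I would fix the partial order on the join-irreducibles $\{n\}$ and $\{0,n\}$ for $1\leq n\leq r$. Their covering relations are $\{n\}<\{n+1\}$, coming from a single shift, and $\{n\}<\{0,n\}$, coming from an inclusion of equal maximum. From these I claim the full order is $\{m\}\leq\{n\}$ iff $m\leq n$, and $\{m\}\leq\{0,n\}$ iff $m\leq n$, while the elements $\{0,m\}$ and $\{0,n\}$ are incomparable for $m\neq n$ and no $\{0,m\}$ lies below any $\{n\}$. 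The two incomparability assertions are the only points needing an argument: a chain witnessing a relation $T\leq S$ in $\mc{N}$ consists of shift steps, each raising the maximum by one while preserving cardinality, and inclusion steps, each preserving the maximum and never decreasing cardinality. Hence cardinality is non-decreasing along any such chain, so $\{0,m\}\leq\{n\}$ (cardinality $2$ to $1$) is impossible, and $\{0,m\}\leq\{0,n\}$ (cardinality $2$ throughout) admits no proper inclusion and so is a pure sequence of shifts, forcing $\{0,n\}=\{0,m\}+k=\{k,m+k\}$ and thus $m=n$.

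With the order fixed, the down-set of $\{n\}$ is $\{\{1\},\dots,\{n\}\}$ and the down-set of $\{0,n\}$ is $\{\{1\},\dots,\{n\},\{0,n\}\}$. I would then argue both inclusions. Given a non-empty lower set $L$, its singleton elements form a down-closed subset of the chain $\{1\}<\{2\}<\cdots$, hence equal $\{\{1\},\dots,\{s\}\}$ for some $s\leq r$; and whenever $\{0,t\}\in L$ down-closure forces $\{t\}\in L$, so $t\leq s$. This gives exactly the displayed form with $\max_i t_i\leq s$. Conversely, for a set of the displayed form I would verify down-closure elementwise from the two down-sets above: the down-set of each $\{m\}$ with $m\leq s$ lies in $\{\{1\},\dots,\{s\}\}$, and the down-set of each $\{0,t_i\}$ lies in $\{\{1\},\dots,\{s\}\}\cup\{\{0,t_i\}\}\subseteq L$ since $t_i\leq s$, so $L$ is a lower set.

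It remains to record the degenerate case $s=0$: then $L$ contains no singleton, and since any $\{0,t\}\in L$ would require $\{t\}\in L$ with $1\leq t\leq 0$, it contains no double element either, so $L=\emptyset$. This accounts for the separate $\emptyset$ alternative in the statement. The only genuinely non-formal step is the pair of incomparability claims for the top row of \cref{figu:JIN}, and I expect that cardinality-and-maximum bookkeeping to be the main obstacle; once the order is determined, the characterisation of lower sets is routine.
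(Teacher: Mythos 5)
Your proof is correct and takes the route the paper leaves implicit, since the lemma is stated with its proof omitted as routine: you pin down the order on the join-irreducibles of $\mc{N}_r$ depicted in \cref{figu:JIN} and read off the lower sets, with the monotonicity of cardinality and of the maximum along chains in $\mc{N}$ supplying exactly the incomparability claims ($\{0,m\}\not\leq\{n\}$ and $\{0,m\}\not\leq\{0,n\}$ for $m\neq n$) that the figure asserts without argument. Your handling of the degenerate case $s=0$ correctly accounts for the separate $\emptyset$ alternative, and nothing further is needed.
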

\begin{lemm}\label{posetIsoNrAndLowerSets}
  There is an isomorphism of posets between $\mc{N}_r$ and the lower sets of join-irreducible elements of $\mc{N}_r$.
\end{lemm}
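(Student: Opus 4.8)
The plan is to prove the isomorphism by exhibiting the Birkhoff comparison map explicitly, together with an explicit inverse, rather than by invoking \cref{Birkhoff} (which would presuppose the distributivity we are ultimately trying to establish). Define $\Phi$ from $\mc{N}_r$ to the lower sets of join-irreducible elements by sending $S$ to the set $\Phi(S)$ of join-irreducibles $J$ with $J\leq S$. By transitivity of $\leq$ this is a lower set, and $\Phi$ is order-preserving, so the content of the lemma is that $\Phi$ is a bijection whose inverse is also order-preserving.

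The main tool I would set up first is a concrete description of the order on $\mc{N}$. Since both generating relations ($T\subset S$ with $\max T=\max S$, and $S=T+1$) only increase or fix the maximum, any chain witnessing $T\leq S$ consists of upward steps; moreover a shift followed by an inclusion can be exchanged for the inclusion between the corresponding shifts, so pushing all shifts to the front yields
\[
  T\leq S \iff l:=\max S-\max T\geq 0 \ \text{ and } \ T+l\subseteq S.
\]
Using this criterion together with the identification of the join-irreducibles as the singletons $\{n\}$ and the pairs $\{0,n\}$ for $n\geq 1$, I would compute $\Phi(S)$ explicitly. Writing $s=\max S$, one finds the singletons below $S$ are exactly $\{1\},\dots,\{s\}$, while $\{0,m\}\leq S$ holds iff $m\leq s$ and $s-m\in S$; thus the pairs below $S$ are exactly $\{0,s-q\}$ for $q\in S$ with $q<s$. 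Comparing with \cref{lowerSetsOfNr} confirms $\Phi(S)$ is a lower set of the required form.

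I would then define the candidate inverse $\Psi$: send the empty lower set to the least element $\{0\}$, and send a nonempty lower set, whose singleton part is necessarily an initial segment $\{1\},\dots,\{s\}$, to $S=\{s\}\cup\{\,s-t \mid \{0,t\}\ \text{lies in the lower set}\,\}$. Since a lower set containing $\{0,t\}$ must contain $\{1\},\dots,\{t\}$, we have $t\leq s$, so $S\in\mc{N}_r$ with $\max S=s$; a direct check against the formula for $\Phi$ gives $\Psi\circ\Phi=\id$ and $\Phi\circ\Psi=\id$. Finally, for order-reflection, suppose $\Phi(S)\subseteq\Phi(S')$ and set $s=\max S$, $s'=\max S'$. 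The singleton parts force $s\leq s'$, and for each $q\in S$ the pair $\{0,s-q\}$ (or, when $q=s$, the top singleton $\{s\}$) lies in $\Phi(S')$, which by the explicit description forces $q+(s'-s)\in S'$; hence $S+(s'-s)\subseteq S'$, and so $S\leq S'$ by the criterion. This makes $\Phi$ a poset isomorphism.

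The step I expect to be the main obstacle is establishing and correctly applying the order criterion: verifying that shifts commute past inclusions, so that the order is governed solely by the single containment $T+(\max S-\max T)\subseteq S$, and then translating an inclusion $\Phi(S)\subseteq\Phi(S')$ of lower sets back into this elementwise containment. The bookkeeping for the pairs—that $\{0,m\}\leq S$ is controlled exactly by the membership $s-m\in S$, and that no unexpected comparabilities among the pairs $\{0,m\}$ are overlooked—is where care is needed; once the criterion is in hand, injectivity, surjectivity and order-reflection all follow from the same computation.
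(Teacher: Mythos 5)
Your proposal is correct and takes essentially the same approach as the paper: your $\Phi$ and $\Psi$ are exactly the paper's maps $\alpha$ (sending $S=\{t_1,\ldots,t_k,s\}$ to the join-irreducibles below it, namely $\{1\},\ldots,\{s\}$ and the pairs $\{0,s-t_i\}$) and $\beta$ (sending a lower set to the join of its elements). The explicit order criterion $T\leq S \iff \max S-\max T\geq 0$ and $T+(\max S-\max T)\subseteq S$, obtained by pushing shifts past inclusions, is simply a careful expansion of the verification the paper compresses into ``these can be checked to be order preserving and inverse to each other.''
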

\begin{proof}
  A lower set of the form given in \cref{lowerSetsOfNr} is sent to the join of its elements in $\mc{N}_r$, the set:
  \begin{equation*}
    \left\{s-t_1,s-t_2,\ldots,s-t_k,s\right\}
  \end{equation*}
  and $\emptyset$ is sent to $\left\{0\right\}$.
  Conversely an element of $S=\left\{t_1,t_2,\ldots, t_k,s\right\}$ of $\mc{N}_r$ is sent to the set of join-irreducibles in $\mc{N}_r$ less than it:
  \begin{equation*}
    \left\{\left\{1\right\},\left\{2\right\},\ldots,\left\{s\right\}\right\}\cup
    \left\{\left\{0,s-t_1\right\},\left\{0,s-t_2\right\},\ldots,
      \left\{0,s-t_k\right\}\right\}
  \end{equation*}
  and $\left\{0\right\}$ is sent to $\emptyset$.
  These can be checked to be order preserving and inverse to each other.
\end{proof}

Write $\alpha$ for this construction sending an element of $\mc{N}_r$ to the lower set and $\beta$ for its inverse.

\begin{lemm}
  We have that:
  \begin{align*}
    \alpha(A\vee B)&=\alpha(A)\cup \alpha(B)\,,
    &\alpha(A\wedge B)&=\alpha(A)\cap\alpha(B)\,.
  \end{align*}
  In particular $\alpha$ and $\beta$ preserve the join and meet operations.\qed
\end{lemm}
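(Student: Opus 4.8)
The plan is to exploit the explicit description of $\alpha$ from \cref{posetIsoNrAndLowerSets}, which records two independent pieces of data about an element $S\in\mc{N}_r$: a \emph{singleton part} $\{\{1\},\ldots,\{\max S\}\}$ determined only by $\max S$, and a \emph{two-element part} $\{\{0,\max S-t\} : t\in S,\ t<\max S\}$ determined by the gaps between $\max S$ and the remaining elements. Since the singleton join-irreducibles and the two-element join-irreducibles are disjoint, the set operations $\cup$ and $\cap$ act independently on these two parts, so it suffices to verify each identity separately on singleton parts and on two-element parts.

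First I would treat the join. Writing $a=\max A$, $b=\max B$ and $m=\max\{A\cup B\}=\max(a,b)$, the defining formula gives $A\vee B=(A+m-a)\cup(B+m-b)$, both summands having maximum $m$. The singleton part of $\alpha(A\vee B)$ is then $\{\{1\},\ldots,\{m\}\}$, which is exactly the union of the singleton parts $\{\{1\},\ldots,\{a\}\}$ and $\{\{1\},\ldots,\{b\}\}$ of $\alpha(A)$ and $\alpha(B)$. For the two-element part the crucial observation is that shifting $A$ by $m-a$ sends $t\in A$ to $t+m-a$, whose associated index is $m-(t+m-a)=a-t$; this is precisely the index contributed by $t$ to $\alpha(A)$, so the shift leaves the two-element part unchanged. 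Hence the two-element part of $\alpha(A\vee B)$ is the union of those of $\alpha(A)$ and $\alpha(B)$, giving $\alpha(A\vee B)=\alpha(A)\cup\alpha(B)$.

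The meet is handled by the same bookkeeping applied to $A\wedge B=(A-m+b)\cap(B-m+a)$. Both intersected sets have maximum $\mu\coloneqq a+b-m=\min(a,b)$, realised by the elements $a\in A$ and $b\in B$ respectively, so $\mu\in A\wedge B$ and $\max(A\wedge B)=\mu$; this makes the singleton part of $\alpha(A\wedge B)$ equal to $\{\{1\},\ldots,\{\mu\}\}$, the intersection of the singleton parts of $\alpha(A)$ and $\alpha(B)$. For the two-element part I would show that an element $t\in A\wedge B$ with $t<\mu$ corresponds, via $t+m-b\in A$ and $t+m-a\in B$, to an index $\mu-t$ that equals $a-(t+m-b)$ and $b-(t+m-a)$ simultaneously, i.e.\ a two-element index common to both $\alpha(A)$ and $\alpha(B)$; conversely any common index $j$ yields $t=\mu-j\in A\wedge B$. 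This identifies the two-element part of $\alpha(A\wedge B)$ with the intersection of those of $\alpha(A)$ and $\alpha(B)$, so $\alpha(A\wedge B)=\alpha(A)\cap\alpha(B)$.

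Finally, the ``in particular'' clause is immediate: \cref{posetIsoNrAndLowerSets} already gives that $\alpha$ is an order isomorphism with inverse $\beta$, and $\cup$ and $\cap$ are the join and meet of the distributive lattice of lower sets, so the two displayed identities say exactly that $\alpha$ (and hence $\beta$) preserves join and meet. The one point demanding care throughout is the index bookkeeping under the shifts --- in particular checking that the shifted intersection in the meet is genuinely governed by the common two-element indices and stays within $\mc{N}_r$ (all resulting indices lie in $\{1,\ldots,\mu\}$, so the elements of $A\wedge B$ lie in $\{0,\ldots,\mu\}$); this is the only step where the interaction between the shifts and the intersection must be verified rather than read off.
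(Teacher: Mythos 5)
Your proof is correct and is essentially the argument the paper intends: the lemma is stated there without a written proof, and your direct verification from the explicit formulae for $\vee$, $\wedge$ and $\alpha$ --- splitting $\alpha(S)$ into its singleton part, determined solely by $\max S$, and its two-element part, which is invariant under the shifts $S\mapsto S+l$ since the indices $\max S - t$ are shift-invariant --- is exactly the routine bookkeeping being left to the reader. Your care in the meet case, checking that $\max(A\wedge B)=\min(\max A,\max B)$ is actually attained, that the intersection of the shifted sets stays in $\{0,\ldots,\mu\}$, and that common two-element indices $j\leq\mu$ correspond bijectively to elements $\mu-j\in A\wedge B$, addresses the only points where the omitted verification has genuine content.
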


\begin{coro}
  The lattice structures on $\mc{N}_r$ and $\mc{N}$ are distributive lattices.
\end{coro}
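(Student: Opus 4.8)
The plan is to deduce distributivity by transporting it across the poset isomorphism $\alpha$ of \cref{posetIsoNrAndLowerSets}. The starting observation is the standard fact that, for any poset $P$, the collection of its lower sets ordered by inclusion is a distributive lattice: the union and intersection of lower sets are again lower sets, so $\cup$ and $\cap$ supply the join and meet, and the distributive identities for $\cup$ and $\cap$ hold for arbitrary subsets. This is exactly the lattice of lower sets featuring in Birkhoff's representation theorem, \cref{Birkhoff}.

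For $\mc{N}_r$ I would then argue as follows. By \cref{posetIsoNrAndLowerSets} the map $\alpha$ is an isomorphism of posets from $\mc{N}_r$ onto the lattice of lower sets of the join-irreducibles of $\mc{N}_r$; in particular $\mc{N}_r$ is itself a lattice, its binary joins and meets being the $\beta$-images of unions and intersections. The preceding lemma identifies these joins and meets with the operations $\vee$ and $\wedge$, since $\alpha(A\vee B)=\alpha(A)\cup\alpha(B)$ and $\alpha(A\wedge B)=\alpha(A)\cap\alpha(B)$. Thus $\alpha$ is an isomorphism of lattices onto a distributive lattice, and as distributivity is preserved under lattice isomorphism, $\mc{N}_r$ with $\vee$ and $\wedge$ is a distributive lattice.

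It remains to pass from the bounded posets $\mc{N}_r$ to $\mc{N}$. Each distributive law involves only three elements $A,B,C$, so given such a triple I would choose $r$ at least as large as each of $\max A$, $\max B$ and $\max C$, placing all three in $\mc{N}_r$. Because the operations $\vee$ and $\wedge$ restrict to $\mc{N}_r$, every term appearing in the two distributive identities can be computed inside $\mc{N}_r$, where the identities already hold; since the triple was arbitrary, $\mc{N}$ is a distributive lattice.

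The one point requiring care, and the step I expect to be the genuine (if modest) obstacle, is justifying that $\vee$ and $\wedge$ really do restrict to $\mc{N}_r$ and that this restriction agrees with the operations on $\mc{N}$, so that the finite reduction is valid. This is the remark made when $\mc{N}_r$ was introduced, and it can be verified directly from the formulae in \cref{MeetJoinFormulae}: one checks that $\max(S\vee T)$ and $\max(S\wedge T)$ are both at most $\max\{S\cup T\}$, so that $S,T\in\mc{N}_r$ forces $S\vee T,S\wedge T\in\mc{N}_r$.
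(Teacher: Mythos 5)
Your proof is correct and takes essentially the same route as the paper: deduce distributivity of $\mc{N}_r$ by transporting it across the isomorphism of \cref{posetIsoNrAndLowerSets} onto the (distributive) lattice of lower sets, using the lemma that $\alpha$ converts $\vee$ and $\wedge$ into $\cup$ and $\cap$, and then verify each distributive identity for a triple in $\mc{N}$ inside a sufficiently large sub-lattice $\mc{N}_r$, which is legitimate because all elements are finite subsets. Your explicit check from \cref{MeetJoinFormulae} that $\max(S\vee T)$ and $\max(S\wedge T)$ are bounded by $\max\left\{S\cup T\right\}$, so that the operations genuinely restrict to $\mc{N}_r$, merely spells out a detail the paper asserts without proof when introducing $\mc{N}_r$.
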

\begin{proof}
  The lattice structure on $\mc{N}_r$ was shown to be isomorphic to that on the lattice of lower sets of join-irreducible elements in \cref{posetIsoNrAndLowerSets} which is a distributive lattice, hence so too is $\mc{N}_r$.

  The case for $\mc{N}$ now follows by verifying the distributive relations in one of the sub-lattices $\mc{N}_r$ since the elements of $\mc{N}$ are finite subsets.
\end{proof}


\appendix

\bibliographystyle{alpha}
\bibliography{./tex/bibliography}
\end{document}